\documentclass{article}       
\usepackage{graphicx}
\usepackage[latin1]{inputenc}
\usepackage[english]{babel}
\usepackage{graphicx}
\usepackage{amsmath,amsfonts,amssymb,amsthm}
\usepackage{bbm,wasysym,stmaryrd}
\usepackage{subfigure}
\usepackage{color}
\usepackage{ifthen}
\usepackage{hyperref}
\usepackage[normalem]{ulem}

\newboolean{DisplayBigFigs}
\setboolean{DisplayBigFigs}{true}

\setlength{\textwidth}{170mm}
\setlength{\textheight}{51.6pc}
\setlength{\evensidemargin}{-5mm}
\setlength{\oddsidemargin}{-5mm}
\setlength{\topmargin}{-0.5cm}
\linespread{1.6}

\newcommand{\R}{\mathbb{R}}
\newcommand{\N}{\mathbb{N}}

\newcommand{\der}[2]{ \frac{\text{d} #1}{\text{d} #2} }  
\newcommand{\Matrix}[4]{\left (\begin{array}{cc} #1 & #2 \\ #3 & #4 \end{array}\right)}
\newcommand{\Vector}[2]{\left (\begin{array}{cc} #1 \\ #2  \end{array}\right)}

\theoremstyle{plain}
\newtheorem{theorem}{Theorem}
\newtheorem{prop}{Proposition}

\newtheorem{rem}{Remark}

\newcommand{\Exp}[1]{\mathbb{E}\left [ #1 \right]}

\newcommand{\eps}{\varepsilon}
 
\newcommand{\M}{\mathcal{M}}

\definecolor{orange}{rgb}{1.00,0.50,0.0}
\definecolor{violet}{rgb}{.40,0.00,0.7}	

\newcommand{\revision}[1]{{{#1}}}

\graphicspath{{./Figures/}}

\begin{document}

\title{Complex oscillations in the delayed FitzHugh-Nagumo equation}

\author{Maciej Krupa\footnotemark[2] \footnotemark[4]  \and Jonathan D. Touboul\footnotemark[1] \footnotemark[4]}
\renewcommand{\thefootnote}{\fnsymbol{footnote}}
\footnotetext[1]{The Mathematical Neurosciences Team, Center for Interdisciplinary Research in Biology (CNRS UMR 7241, INSERM U1050, UPMC ED 158, MEMOLIFE PSL*), jonathan.touboul@college-de-france.fr}
\footnotetext[4]{MYCENAE Laboratory, Inria Paris-Rocquencourt}
\footnotetext[2]{NEUROMATHCOMP Laboratory, Inria Sophia Antipolis-M\'editeran\'ee}

\renewcommand{\thefootnote}{\arabic{footnote}}
\date{\today}

\maketitle

\noindent \textbf{Abstract}	
Motivated by the dynamics of neuronal responses, we analyze the dynamics of the FitzHugh-Nagumo slow-fast system with delayed self-coupling. This system provides a canonical example of a canard explosion for sufficiently small delays. Beyond this regime, delays significantly enrich the dynamics, leading to mixed-mode oscillations, bursting and chaos. These behaviors emerge from a delay-induced subcritical Bogdanov-Takens instability arising at the fold points of the S-shaped critical manifold. Underlying the transition from canard-induced to delay-induced dynamics is an abrupt switch in the nature of the Hopf bifurcation. 

\medskip

\noindent \textbf{Keywords} Delayed Differential Equations; Slow-Fast systems; Mixed-Mode Oscillations; Bursting; Chaos

\setcounter{secnumdepth}{2}
\setcounter{tocdepth}{2}

\bigskip
\bigskip
\hrule
\tableofcontents
\hrule

\bigskip

Dynamical systems with multiple timescales and delays are used as models in  a number of applications including the dynamics of excitable cells in biology~\cite{Ermentrout09, drover2004analysis, RW07, simpson2011mixed,sherman1988emergence,rinzel-ermentrout:98,terman1991chaotic}, mechanical systems~\cite{campbell-etal:09}, chemical reactions~\cite{koper1995bifurcations} and physical systems~\cite{higuera2005dynamics}. Ordinary differential equations with multiple timescales are known to display a variety of complex oscillations, including canard explosions~\cite{benoit-etal:81}, {relaxation oscillation}, mixed-mode oscillations (MMOs) \cite{brons-krupa-wechselberger:06} and bursting \cite{rinzel}. A canard explosion is a very fast transition, upon variation of a parameter, from a small amplitude limit cycle to a relaxation oscillator, a type of periodic solution consisting of long periods of quasi static {behavior} interspersed with short periods of rapid transitions. The topic of this work and the companion investigation~\cite{krupa-touboul:14a} is to understand the effect of introducing delays in a system whose dynamics, in the absence of delays, is characterized by a canard explosion and relaxation oscillations. In the present paper we focus on the FitzHugh Nagumo (FhN) system with delays:
\begin{equation}\label{eq:FhNdelay}
	\begin{cases}
			x'_t=x_t-\frac{x_t^3}{3}+y_t+ J (x_t-x_{t-\tau})\\
			y_t'=\varepsilon (a-\,x_t),
	\end{cases}
\end{equation}
for which we find a variety of complex  oscillations as delays are varied. 
System \eqref{eq:FhNdelay} is slow/fast, with the ratio of timescales given by a small parameter $\varepsilon$ and a delay $\tau\geq 0$ of arbitrary finite size in the fast equation.  It can be seen as a simple model of a neuronal network in which the delay models the typical transmission time of information through the axons and synapses (see section~\ref{sec:motivation}). Most of the dynamical mechanisms we find are relevant in a more general setting than our model system{.} 

The originality of our approach is to use the geometric singular perturbation theory (GSPT) \cite{Jones,rinzel} in the context of delayed equations. The fundamental principle of that theory, as explained in \cite{rinzel}, consists of identifying the slow and the fast approximations of the dynamics in the limit $\varepsilon=0$, and then constructing the dynamics of the full system for $\varepsilon>0$ out of these two approximations. In that theory, the first step towards the understanding the dynamics of the full system consists in understanding the fast and slow subsystems. In the case we consider the slow dynamics is simple but the fast dynamics is quite involved. A major component of our work is an exploration of the dynamics of the fast system. Based on these results, we describe the full dynamics for $\varepsilon>0$  based on the $\varepsilon=0$ approximations. To this purpose, one needs to be able to characterize the behavior of slow manifolds, which classically relies heavily on the use of Fenichel theory \cite{fenichel:79}. Recent results of Hupkes and Sandstede~\cite{hupkes-standstede:10} imply the existence of Fenichel slow manifolds in the system we study, and are  fundamental to the generalization of geometric singular perturbation theory developed in this paper as well as in the companion investigation~\cite{krupa-touboul:14a}.

Qualitatively, we will see that as delays are increased, system~\eqref{eq:FhNdelay} can undergo two types of transition from steady-state to oscillatory dynamics: one through a delay-induced canard phenomenon \cite{benoit-etal:81,baer1986singular,dumortier1996canard}, and one due a destabilization of the slow manifold due to the delays in the fast dynamics. The canard explosions found are generic: the qualitative behavior of the system in the vicinity of this transition is similar to that found in systems with no delays. Their characterization in a general context is the subject of our companion study ~\cite{krupa-touboul:14a}. Here, we concentrate on one specific system~\eqref{eq:FhNdelay} and investigate the region of the parameter space  complementary to the canard explosion, where delay induced instabilities play a significant role {in} the dynamics. While some results about Hopf bifurcations and reduction to normal form are proved, our study encompasses a part of numerical simulations and continuation. This leads us to predict and confirm by means of simulations the existence of a variety of complex oscillatory behaviors, both periodic and chaotic, including bursting orbits corresponding to recurrent dynamics composed of phases of fast oscillations interspersed with periods of quiescence. Other types of dynamics relevant to our findings are MMOs, which are recurrent trajectories of a dynamical system characterized by an alternation between oscillations of very distinct large and small amplitude (see~\cite{desroches:12} for a recent review on the subject). In our work we show the presence of a new type of MMO, due to the presence of a Bogdanov-Takens point in the fast dynamics. 

It is interesting to note that the type of complex oscillations (relaxation, MMO and bursting) found in our simple system may be relevant to the underlying biological problem, as similar solutions are reported in neuronal recordings and models~\cite{kandel2000principles,izhikevich:00}.

The interest of this work is also mathematical, as our study should be seen as a pilot investigation in the effort to extend geometric singular perturbation theory to systems with delays in the fast dynamics and finite dimensional slow dynamics.

The paper is organized as follows. We introduce in section~\ref{sec:model} our model and the differences between our approach and {previous work}.
We start by investigating, using bifurcation theory and numerical simulations, the oscillatory dynamics of the full system in section~\ref{sec:FhNFull}. We will observe a number of complex oscillatory patterns, which we investigate using the GSPT. We therefore start by investigating the bifurcations in the fast system in section~\ref{sec:fastHopf}, and complement this analytical study with extensive numerical simulations leading to a more comprehensive bifurcation diagram presented in section~\ref{sec-fastmore}. 
{S}ection~\ref{sec:Complex} presents how these elements combine {to produce} complex oscillatory patterns including MMOs, bursting and chaotic behaviors.

\section{Model and summary of the main results}\label{sec:model}
The study of the dynamics of delay differential equations with multiple timescales has been the subject of a number of important works. We briefly review these contributions and summarize the main results of the present paper in section~\ref{sec:1}. In this study we are interested in one specific system~\eqref{eq:FhNdelay} motivated by the modeling of the electrical activity of nerve cells, which we  present in more detail in section~\ref{sec:motivation}.  

\subsection{Previous works and main results} \label{sec:1}
\revision{Our results are concerned with the dynamics of a planar delay dynamical system~\eqref{eq:FhNdelay} with timescale separation and a delay in the fast equation. A similar system has been investigated in a mechanical context~\cite{campbell-etal:09}, where the behavior of a machine is described by a delay differential equation with a slow variable. The authors focus on the behavior of the system in the limit of small delays. In that regime they are able to reduce their equation {to} an ODE and use finite-dimensional methods to show existence of canard explosions. In the present paper, we shall be interested in the behavior of the system beyond the small delay regime, where the reduction to ODEs no longer holds. This is where we will need to use the singular perturbation methods to characterize the behavior of the system. 

Another class of models including fast dynamics and delays was introduced in the 1980s. First works in the context of delay differential equations with multiple timescales were {carried out}  by Mallet-Paret and Nussbaum in~\cite{mallet1986global} {and focused on} scalar equations with delays of type: 
\begin{equation}\label{eq-wesys}
\varepsilon \dot x= -x+f(x(t-1)).
\end{equation}
In that paper, they showed existence of periodic solutions and investigated their behavior in the limit $\eps\to 0$. The analyses of systems of type~\eqref{eq-wesys} were pursued by Weicker, Erneux and collaborators \cite{WE, WE1,Kozyref, WE3}. 
In these systems, setting $\varepsilon=0$ leads to a discrete dynamical system, which constitutes a significant difference with our model, where one obtains a continuous time delayed differential equation in one dimension. In that sense, our system~\eqref{eq:FhNdelay} is more directly related to finite dimensional slow-fast systems. We rely heavily on this analogy and develop the GSPT in that context. A number of results were obtained in the context of equation~\eqref{eq-wesys}: the authors of~\cite{WE, WE1}  investigated singular Hopf bifurcation, which is classically closely related to canard explosions~\cite{baer1986singular}. These authors used a method based on asymptotic expansions, which is relatively different and somehow complementary to GSPT. Finally, in the present paper we do not consider canard explosions (singular Hopf bifurcations) and focus on the parameter region where they do not occur. Our companion paper \cite{krupa-touboul:14a} does focus on canard explosions and is similar to \cite{WE, WE1} in this aspect, but rather than asymptotic expansions uses  the complementary GSPT method. {Finally we} mention that asymptotic expansions were extended to a system that has more similarities to our system in~\cite{WE3} in that the equation is no more scalar and the scaling of the delay is as in our equation~\eqref{eq:FhNdelay}. The analysis of \cite{WE3}  focuses mainly on the vicinity of Hopf bifurcation and does not make a connection between the fast bifurcation diagram and the patterns of the flow, which is fundamental to our work.

In the present paper, we shall demonstrate a few analytical results that we summarize below. These are concerned with transition to oscillation through Hopf bifurcations. The proofs are classical, yet the conclusions are very interesting, namely the lack of smoothness of of the Hopf bifurcation curve in the limit $\eps=0$ and the existence of a Bautin (generalized Hopf) point, where the sign of the first Lyapunov coefficient changes. We prove a result on a Hopf 
and Bogdanov-Takens bifurcation
in the fast system and a result on a Hopf bifurcation in the full system. The statement of the theorem relevant to  the fast system is as follows: 
\begin{theorem}\label{thm:Summary2}
	Consider the fast system of ~\eqref{eq:FhNdelay} with $J\tau>1$. 
	\begin{enumerate}
		\item There exists a family of disconnected Hopf bifurcation curves indexed by $k\in\mathbb{N}$ given by:
\begin{equation}\label{eq-forHopff}
\tau_f^k(J,a):= \frac{1}{\sqrt{a^2-1}\sqrt{2J+1-a^2}}\left(\cos^{-1}\left (\frac{1-a^2+J}{J}\right ) + 2k\pi\right) ,\qquad y=a-a^3/3.
\end{equation}

with $a\in (1,\sqrt{2J+1})$ and $\tau\in(1/J, \infty)$.
		\item There are two Bogdanov-Takens points at $J\tau=1$, $y=\pm\frac23$.
	\end{enumerate}
\end{theorem}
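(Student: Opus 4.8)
\emph{Proof strategy.} The plan is to treat this as a standard spectral problem for a scalar delay equation: pass to the fast subsystem, linearise at an equilibrium, write the transcendental characteristic equation, and read off the parameter values at which a purely imaginary pair (part~1) or a double zero root (part~2) appears, then check the accompanying non-degeneracy and transversality conditions. Concretely, I would set $\eps=0$ in~\eqref{eq:FhNdelay}, which freezes $y$ and leaves the scalar delay equation $x_t' = x_t - x_t^3/3 + y + J(x_t - x_{t-\tau})$ with $y$ a parameter. Its equilibria are the constants $x_t\equiv x^\star$ with $y=(x^\star)^3/3 - x^\star$; writing $x^\star=-a$ turns this into the critical manifold $y=a-a^3/3$ of the statement, with folds at $x^\star=\mp1$, i.e.\ $y=\pm\tfrac23$. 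Setting $\mu:=1-(x^\star)^2=1-a^2$, the variational equation at $x^\star$ is $u_t'=(\mu+J)u_t - Ju_{t-\tau}$, with characteristic function
\[
\chi(\lambda)\;=\;\lambda-(\mu+J)+Je^{-\lambda\tau}.
\]
Note that $\mu<0$ on the outer branches $|x^\star|>1$, and that the fast system is invariant under $(x,y)\mapsto(-x,-y)$, so it suffices to work on one of the two symmetric branches and transport the conclusion to the other.

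For part~1, imposing $\chi(i\omega)=0$ with $\omega>0$ and separating real and imaginary parts gives $J\cos(\omega\tau)=\mu+J$ and $J\sin(\omega\tau)=\omega$. Squaring and adding eliminates $\tau$: $\omega^2=-\mu(\mu+2J)=(a^2-1)(2J+1-a^2)$, which is positive exactly when $a\in(1,\sqrt{2J+1})$ --- this simultaneously yields the frequency and pins down the admissible window of $a$. Since $\sin(\omega\tau)=\omega/J>0$ the angle lies in $(0,\pi)$ modulo $2\pi$, hence $\omega\tau=\cos^{-1}\!\big((1-a^2+J)/J\big)+2k\pi$ with $k\in\N$, and dividing by $\omega=\sqrt{a^2-1}\,\sqrt{2J+1-a^2}$ reproduces~\eqref{eq-forHopff}; examining the endpoints $a\to1^+$ and $a\to\sqrt{2J+1}^-$, where $\omega\to0$, gives $\tau_f^0\to1/J$ and $\tau_f^k\to\infty$ (using $\cos^{-1}(1-s)\sim\sqrt{2s}$), so each branch lies in $\tau\in(1/J,\infty)$, and since $k=\lfloor\omega\tau/2\pi\rfloor$ is recovered from the point, the branches are pairwise disjoint. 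To upgrade this to a genuine Hopf bifurcation I would note that any purely imaginary root $i\omega'$ satisfies $\omega'^2=\omega^2$ by the same elimination, that $\chi(0)=-\mu\neq0$ on this window, and that $\chi'(i\omega)=1-J\tau e^{-i\omega\tau}\neq0$ because $\omega\tau\notin2\pi\Z$ there --- so $\pm i\omega$ is the only, and a simple, imaginary pair --- and finally check transversality by implicit differentiation of $\chi(\lambda)=0$, which gives $d\lambda/d\tau=\lambda Je^{-\lambda\tau}/\big(1-\tau Je^{-\lambda\tau}\big)$ and, via the two relations above, a nonzero real part at $\lambda=i\omega$.

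For part~2 I would place myself at a fold, $\mu=0$, so that $\chi(\lambda)=\lambda-J+Je^{-\lambda\tau}$ and $\chi(0)=0$ for all $\tau$: a zero eigenvalue is always present there. Then $\chi'(0)=1-J\tau$ vanishes if and only if $J\tau=1$, while $\chi''(0)=J\tau^2\neq0$, so at $J\tau=1$ the root $\lambda=0$ has algebraic multiplicity exactly two. Checking the imaginary axis shows that $\cos(\omega\tau)=1$ and $\omega=J\sin(\omega\tau)$ force $\omega=0$, so $\lambda=0$ is the unique critical root and the centre subspace is two-dimensional, carrying a single $2\times2$ Jordan block. Verifying the genericity of the quadratic terms of the reduced equation --- via the centre-manifold and normal-form reduction for delay differential equations --- together with transversality of the two-parameter unfolding then identifies the two points $y=\pm\tfrac23$, $J\tau=1$ as Bogdanov--Takens points, which completes the proof.

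The algebraic eliminations, the endpoint asymptotics and the derivative computations are routine; the spectral bookkeeping is also light, since the special form of $\chi$ makes the uniqueness of the critical eigenvalue almost immediate. The one genuinely laborious step is the Bogdanov--Takens part: while the linear data (double zero, simple critical spectrum) come for free, confirming the non-degeneracy of the reduced normal form and the unfolding transversality at the fold requires the centre-manifold computation rather than the characteristic equation alone, and keeping track of which eigenvalue branch crosses as $(\tau,y)$ vary around $(1/J,\pm\tfrac23)$ is where I expect the main effort to lie.
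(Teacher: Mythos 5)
Your proposal follows essentially the same route as the paper: linearise the fast equation at an equilibrium of the critical manifold, write the scalar transcendental characteristic equation, eliminate $\tau$ by squaring and adding the real and imaginary parts to get $\omega^2=(a^2-1)(2J+1-a^2)$, and read off the branches $\tau_f^k$; your endpoint asymptotics and the identification of the double-zero root at $\mu=0$, $J\tau=1$ all match the paper's computations (the paper exhibits the double zero by showing the affine functions $\alpha+\beta t$ span a two-dimensional invariant subspace with a nilpotent Jordan block, which is equivalent to your $\chi(0)=\chi'(0)=0$, $\chi''(0)\neq 0$). Your explicit transversality check for the Hopf crossing is a small addition the paper does not spell out. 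The one place where you stop short of a proof is the non-degeneracy of the Bogdanov--Takens point: you correctly identify that this requires the centre-manifold and normal-form reduction for delay equations but defer it, whereas the paper actually carries out the Faria--Magalh\~aes reduction and arrives at the explicit planar normal form $\dot x_3=z$, $\dot z=2\tau y+2\nu z-2\tau x_3^2-\tfrac43\tau x_3 z$, whose nonvanishing quadratic coefficients are what certify a generic (subcritical) BT point. Likewise, the genericity and subcriticality of the Hopf branches is established in the paper only through a symbolic computation of the first Lyapunov coefficient evaluated numerically; neither your sketch nor the statement itself demands this, but be aware that part of the paper's claim rests on that computation rather than on the characteristic equation alone.
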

On the full system, we prove the following:
\begin{theorem}\label{thm:Summary1}
	Consider system~\eqref{eq:FhNdelay} with $J>0$. 
	\begin{enumerate}
		\item For $\eps>0$ there exists a unique Hopf bifurcation curve in the plane $(a,\tau)$. The curve is smooth, goes from $a=1$ to $a=\sqrt{1+2J}$ back and forth infinitely many times and reaches arbitrarily large values of $\tau$. It can be decomposed into the following branches: 
		\begin{equation}\label{eq:taustar}
			\begin{cases}
				\tau_1^{k}(J,\eps,a) :={\frac 2 {A + \sqrt{A^2+4\eps}}} \left(\cos^{-1}\left(1+\frac{1-a^2}{J}\right) + 2 k \pi \right)\\		
				\tau_2^{k}(J,\eps,a) :={\frac 2 {A - \sqrt{A^2+4\eps}}} \left(\cos^{-1}\left(1+\frac{1-a^2}{J}\right) - 2 (k+1) \pi \right)	
			\end{cases}
		\end{equation}
		with $A=\sqrt{(a^2-1)(1+2J-a^2)}$, $a\in [1,\sqrt{1+2J}]$ and $k\in \mathbb{N}$. Qualitatively, we have:
		\item If $\tau$ and $J$ are fixed so that $J\tau < 1$ then, for $\eps>0$ sufficiently small, there exists a unique Hopf bifurcation $\tau_1^0$, it is generic and supercritical.
		\item If $\tau$ and $J$ are fixed so that $J\tau > 1$ then, for $\eps>0$ sufficiently small, the Hopf bifurcation corresponding to the change of stability of the fixed point (at $\tau_1^0$) is generic and subcritical.
		\item As $\eps\to 0$, the curve $\tau_1^0$ associated to the change of stability of the fixed point converges to the union of the line segment $a=1$, $J\tau\le 1$ and the Hopf bifurcation curve of the fast system (see Theorem \ref{thm:Summary2}). The primary Hopf bifurcation $\tau_1^0$ in that limit is continuous but not differentiable, and the sub- or super-critical nature of the bifurcation changes at $J\tau=1$. 
	\end{enumerate}
\end{theorem}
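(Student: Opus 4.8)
The plan is to read the whole theorem off the characteristic equation of \eqref{eq:FhNdelay} linearised at its unique equilibrium, which has $x_*=a$, so that the linearisation coefficient along $x$ is $1-a^2$ and the characteristic function is
\[
\Delta(\lambda;a,\tau,\eps)=\lambda^2-\lambda\bigl(1-a^2+J-Je^{-\lambda\tau}\bigr)+\eps .
\]
For item~1 I would put $\lambda=i\omega$, $\omega>0$, in $\Delta=0$ and separate real and imaginary parts: the imaginary part gives $\cos\omega\tau=1+(1-a^2)/J$, solvable exactly for $a\in[1,\sqrt{1+2J}]$ and fixing $\omega\tau$ modulo $2\pi$, while the real part gives $\omega J\sin\omega\tau=\omega^2-\eps$. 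Squaring and adding eliminates $\omega\tau$ and gives $(\omega^2-\eps)^2=A^2\omega^2$ with $A^2=J^2-(1-a^2+J)^2=(a^2-1)(1+2J-a^2)$, whence $\omega\in\{\tfrac12(A+\sqrt{A^2+4\eps}),\ \tfrac12(-A+\sqrt{A^2+4\eps})\}$; feeding each choice, together with $k\in\N$, back into the determination of $\omega\tau$ reproduces exactly the two families $\tau_1^k,\tau_2^k$ of \eqref{eq:taustar}. Since this is the \emph{entire} set of $(a,\tau)$ at which $\Delta$ has a root on the imaginary axis, the Hopf locus is unique.

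To see that these branches assemble into a single smooth curve I would reparametrise the locus by $\theta=\omega\tau\in(0,\infty)$: the two scalar equations then read $a(\theta)=\sqrt{1+J(1-\cos\theta)}$ and $\omega(\theta)=\tfrac12\bigl(J\sin\theta+\sqrt{J^2\sin^2\theta+4\eps}\bigr)$, both real-analytic, with $\tau(\theta)=\theta/\omega(\theta)$. On each interval $(k\pi,(k+1)\pi)$ the value $a(\theta)$ is strictly monotone between $1$ and $\sqrt{1+2J}$, so consecutive lobes glue at the turning values $a=1$ and $a=\sqrt{1+2J}$; at those points $a'(\theta)=0$ but $\omega=\sqrt\eps$ and $\tau'(\theta)=\bigl(\sqrt\eps-k\pi(-1)^k J/2\bigr)/\eps\neq0$ for $\eps$ small (so the turning points are parabola-like, not cusps), hence the parametrisation is an immersion; it is injective on each lobe because $a$ is strictly monotone there, and distinct lobes are separated in $\tau$ for $\eps$ small. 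Thus the Hopf curve is an embedded analytic $1$-manifold oscillating between $a=1$ and $a=\sqrt{1+2J}$, and since $\omega(\theta)$ is bounded, $\tau(\theta)\to\infty$, giving the remaining clauses of item~1. (For a fixed value of $\tau$ and $\eps$ small only the first lobe $\tau_1^0$ meets it, and exactly once — this is the uniqueness statement inside items~2--3.)

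Items~2 and~3 concern the criticality of the primary Hopf $\tau=\tau_1^0$. Genericity is immediate: $i\omega$ is an algebraically simple root of $\Delta$ (from $\partial_\lambda\Delta(i\omega)\neq0$, a short computation) and the transversality $\mathrm{Re}(d\lambda/d\tau)|_{\lambda=i\omega}\neq0$ follows by implicitly differentiating $\Delta=0$. For the sign I would reduce \eqref{eq:FhNdelay} near $\tau_1^0$ to its two-dimensional centre manifold via the standard centre-manifold and Poincar\'e normal-form machinery for retarded functional differential equations (the spectral projection onto the $\pm i\omega$ eigenspace), and express the first Lyapunov coefficient $\ell_1$ through the quadratic and cubic terms $-au^2$ and $-\tfrac13u^3$ of the shifted nonlinearity. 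The crucial — and, I expect, hardest — step is to extract the leading behaviour of $\ell_1$ as $\eps\to0$ along the branch $\tau_1^0$, using the Hopf relations and the boundary-layer expansions below, and to show that $\mathrm{sign}\,\ell_1=\mathrm{sign}(J\tau-1)$, so that $\ell_1<0$ (supercritical) when $J\tau<1$ and $\ell_1>0$ (subcritical) when $J\tau>1$; structurally this reflects the fact that on the side $J\tau>1$ the branch $\tau_1^0$ emanates from the fast-system Bogdanov--Takens point of Theorem~\ref{thm:Summary2}, which is subcritical.

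Item~4 follows by asymptotics of the closed form \eqref{eq:taustar} with $k=0$. For fixed $a\in(1,\sqrt{1+2J})$ one has $A>0$, so $\tau_1^0(J,\eps,a)\to\cos^{-1}\!\bigl(1+(1-a^2)/J\bigr)/A=\tau_f^0(J,a)$, the primary fast Hopf curve of Theorem~\ref{thm:Summary2}. In the layer $a^2-1=\eta$ with $\eta$ of order $\eps$, both $A$ and the arccosine are of order $\sqrt\eta\to0$ and $\tau_1^0$ sweeps continuously over $(0,1/J)$ as $a\to1$; hence, in the Hausdorff sense, this portion of the curve collapses onto the segment $\{a=1,\ 0\le J\tau\le1\}$, and the whole limit set is the union of that segment with $\{(a,\tau_f^0(J,a)):a\in(1,\sqrt{1+2J})\}$. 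This set is continuous but has a corner at $(1,1/J)$ — the segment is vertical while $\tau_f^0$ has a finite one-sided slope at $a=1^+$ — so the limiting primary Hopf curve is not differentiable, and the junction $J\tau=1$ is precisely the location of the fast-system Bogdanov--Takens point; the change of sub-/supercriticality there is exactly items~2--3.
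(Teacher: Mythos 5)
Your treatment of items 1 and 4 is correct and matches the paper's computation of the imaginary-axis crossings (modulus and argument of $Je^{-i\omega\tau}=i(\eps/\omega-\omega)+(1-a^2+J)$, leading to $\omega=\tfrac12(\pm A+\sqrt{A^2+4\eps})$ and the two families \eqref{eq:taustar}). Where you genuinely diverge is in proving smoothness of the assembled curve: the paper verifies the gluing by expanding each branch in $\sqrt{a-1}$ and $\sqrt{a-a^*}$ at the turning points and matching coefficients, whereas you reparametrise the whole locus by $\theta=\omega\tau$ and check that $\theta\mapsto(a(\theta),\tau(\theta))$ is an analytic immersion. Your route is cleaner and handles all turning points at once; the paper's expansions have the advantage of exhibiting explicitly which branch connects to which ($\tau_1^k$ to $\tau_2^k$ at $a=\sqrt{1+2J}$, $\tau_2^k$ to $\tau_1^{k+1}$ at $a=1$) and of making the $\eps\to0$ statements of item 4 (the boundary layer $a-1=O(\eps)$ sweeping the segment $0\le J\tau\le 1$, the escape of the $\tau_2^k$ to infinity) immediate. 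Both are fine, and your item 4 asymptotics agree with the paper's.

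The gap is in items 2 and 3. You set up the centre-manifold and normal-form machinery correctly and identify the sign of the first Lyapunov coefficient $\ell_1$ as the crux, but you then only \emph{state} the desired conclusion $\mathrm{sign}\,\ell_1=\mathrm{sign}(J\tau-1)$, call it the hardest step, and offer the location of the Bogdanov--Takens point as a structural heuristic. That is precisely the part that needs an argument, and a single uniform asymptotic evaluation of $\ell_1$ along $\tau_1^0$ is not available in closed form (the exact expression, obtained in the paper's appendix with computer algebra, is intractable by hand). The paper instead splits the two regimes and uses two different mechanisms. For $J\tau<1$ the Hopf point lies in the layer $a-1=O(\eps)$; appending $\eps'=0$ and reducing to the centre manifold yields a planar van der Pol--type system perturbed by a term $a_1\eps\tilde x$ with $a_1=J\tau^2/(2(1-J\tau))>0$, and the Guckenheimer--Holmes formula then gives $\ell_1=-\sqrt{\eps}/8+O(\eps)<0$, hence supercriticality. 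For $J\tau>1$ the Hopf point converges to a fast-subsystem Hopf point with $a$ bounded away from $1$; there the paper computes $\ell_1$ for the fast ($\eps=0$) subsystem, argues via the results of Zhang, Kirk, Sneyd and Wechselberger (using that the slow equation of \eqref{eq:FhNdelay} contributes no $O(1)$ quadratic terms, so no discontinuity of the Lyapunov coefficient occurs in the singular limit) that $\ell_1$ is continuous as $\eps\to0$, and inherits the positive sign from the fast system --- whose positivity is itself established only by numerical evaluation of an exact formula. Without one of these two mechanisms, or a substitute for them, your items 2 and 3 remain unproved; be aware also that even the paper's version of the subcritical case is not fully analytic.
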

Note that Theorem \ref{thm:Summary1} covers in part the parameter region $J\tau<1$, so there is some overlap with \cite{krupa-touboul:14a}, where existence of Hopf bifurcation is proved using a different approach (center manifold theorem). In \cite{krupa-touboul:14a} the existence of Hopf bifurcation is just a minor point in establishing the entire picture of canard explosion. Here we have a very simple and direct method of analysis which allows us to explain the remarkable shape of the locus of Hopf bifurcation in system~\eqref{eq:FhNdelay}. We note that as a consequence of \ref{thm:Summary1}, there exists along the Hopf bifurcation for $\eps>0$ at least one point where the first Lyapunov coefficient vanishes, i.e. a Bautin point, known also as generalized Hopf point.

Another important component of this work is the compilation of the bifurcation diagram for the fast system combining the findings of Theorem \ref{thm:Summary2} and the numerical exploration. We find that depending on the parameter values there can be one, two or three limit cycles, with a variety of bifurcations separating the corresponding regions of robust dynamics. Based on this bifurcation diagram we can conjecture a variety of complex oscillations. A point of particular interest is a dynamic BT bifurcation, studied earlier by \cite{chiba:11} in a 3D model. Near the dynamic BT point we find small amplitude chaos and MMOs. We conjecture that the existence of such solutions can be rigorously established at least in the 3D model of \cite{chiba:11}. Away from the dynamic BT points there is a variety of bursting patterns. Using the bifurcation diagram of Figure \ref{fig:FastBifs} we predict three distinct patterns: regular bursting, chaotic bursting and torus canards, whose existence we confirm numerically. }

\subsection{Motivation and model}\label{sec:motivation}
A central model in mathematical neuroscience, encompassing the excitable nature of the cells and simple enough to allow analytical developments, is given by the FitzHugh-Nagumo model~\cite{FitzHugh:55,nagumo1962active}. This equation, initially introduced as a simplification of the very versatile but much more complex Hodgkin-Huxley neuron model, describes the excitable dynamics of the membrane potential of the neuron $x$ coupled to a slow recovery variable $y$, through the slow/fast ordinary differential equations:
\begin{equation}\label{eq:FN}
	\begin{cases}
			x'=x-\frac{x^3}{3}+y\\
			y'=\varepsilon (a+b\,x-\gamma y)
	\end{cases}	
\end{equation}
where $a$ represents the input current the {neuron} is subjected to and $b$ is {the} interaction strength between the voltage and the recovery variable. The small parameter $\varepsilon$ represents the timescale ratio between the voltage and the recovery variables. Note that the classical van der Pol oscillator corresponds simply to the case $\gamma=0$~\cite{vanderpol:26}.

In~\cite{plant:81}, a linear feedback was introduced to model recurrent self-coupling. Here, we shall rather motivate our model by the description of the electrical activity of large networks. Indeed, cortical behaviors generally arise at macroscopic scales involving a large number of neurons. We consider here that neurons are electrically coupled through gap junctions, resulting in an input current proportional to the voltage difference between the communicating cells~\cite{ermentrout-terman:10b}. We also incorporate the important fact that neurons interact after a delay related to the transmission of information through the axon and dendrites. The corresponding $N$-neurons network mode with stochastic input is given by the equations:
\begin{equation}\label{eq:FNet}
	\begin{cases}
			dx_t^i=(x_t^i-\frac{(x_t^i)^3}{3}+y^i_t+\frac J N \sum_{j=1}^N (x^i_t-x^j_{t-\tau}))\,dt + \sigma dW_t^i\\
			dy^i_t=\varepsilon (a+b\,x^i_t-\gamma  y^i_t)\,dt
	\end{cases}	
\end{equation}
where $W^i_t$ are independent Brownian motions. The presence of noise allows to prove (see~\cite{touboul:12,quininao-mischler}) that in the limit $N\to \infty$, the behavior of a given neuron in the network satisfies the implicit stochastic equation:
\begin{equation}\label{eq:FNMFE}
	\begin{cases}
			dx_t=(x_t-\frac{x_t^3}{3}+y_t+ J (x_t-\Exp{x_{t-\tau}}))\,dt + \sigma dW_t\\
			dy_t=\varepsilon (a+b\,x_t- \gamma y_t)
	\end{cases}	
\end{equation}
where $\Exp{x_t}$ denotes the statistical expectation of the process $x_t$. Note that the presence of noise is essential to derive this limit. In specific cases~\cite{touboul-hermann-faugeras:11,touboul-krupa-desroches:13}, such mean-field equations can be exactly reduced to ordinary differential equations in aggregate variables (e.g., mean and standard deviation). Here, the nonlinear dynamics of the cells prevents from such a reduction. However, in the zero noise limit (a `viscosity solution', that may be seen as a fully synchronized solution of the network equation in the sense of~\cite{stewart2003symmetry}), one obtains the following self-coupled delayed FitzHugh-Nagumo equation:
\begin{equation}\label{eq:Delayed}
	\begin{cases}
			x'_t=x_t-\frac{x_t^3}{3}+y_t+ J (x_t-x_{t-\tau})\\
			y_t'=\varepsilon (a+b\,x_t - \gamma y_t).
	\end{cases}	
\end{equation}

This is precisely the equation we choose to analyze in the present manuscript. When $\tau=0$, this equation is simply the original FhN model since the interaction term vanishes. {In the limit $\eps=0$ the parameters $(a,b,\gamma)$ only affect the dynamics of the slow (reduced) equation, while the fast (layer) equation is independent of these parameters}. Among these models, we focus our attention on the simple case $\gamma=0$, and consider without loss of generality $b=-1$, which is equation~\eqref{eq:FhNdelay}. This system is particularly appealing because its equilibria have a very simple expression as a function of the parameters. This simplicity will allow to perform analytical characterization of its dynamics in the main text, and the FhN model can be understood qualitatively from the analysis of the fast equation. Numerical analysis of the system for $\gamma\neq 0$ is performed in appendix~\ref{append:FhN}. We eventually note that the system~\eqref{eq:FhNdelay}, as a delayed differential equation, is classically seen as a dynamical system in a functional space, and solutions are defined once an initial {segment} of trajectory is defined on a time interval of length $\tau$. Here, we will be interested in fixed points, periodic orbits and chaotic attractors: we will not concentrate on transient behaviors and how the trajectories of the system depends on the choice of the initial conditions.

\section{Oscillatory dynamics of the delayed FitzHugh-Nagumo equation}\label{sec:FhNFull}
Before undertaking the GSPT analysis of system~\eqref{eq:FhNdelay}, we start by investigating the fixed points of the system and their stability for $\eps>0$. In the regions were the fixed point is unstable, we investigate numerically the type of dynamics observed. 

\subsection{Hopf bifurcations}\label{sec-Hopfexist}
For any fixed $a\in \R$, there exists a unique fixed point to the FhN system~\eqref{eq:Delayed} given by $(x_a,y_a)=(a,-a+a^3/3)$. The following proposition characterizes its stability. 

\begin{prop}\label{pro:HopfFull}
	The fixed point $(a,-a+a^3/3)$ is
	\begin{itemize}
		\item stable if $\vert a \vert >\sqrt{1+2J}$
		\item unstable if $\vert a \vert<1$
		\item for $a\in [1,\sqrt{1+2J}]$, there exists one curve of Hopf bifurcations that governs changes on the stability of the fixed point. This curve is given by the branches $\tau^k_1$ and $\tau^k_2$ that together form a smooth curve along which the linearized system has at least one pair of purely imaginary eigenvalues.
	\end{itemize}	
\end{prop}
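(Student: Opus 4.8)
The plan is to locate the roots of the characteristic equation of the linearization at $(a,-a+a^3/3)$ relative to the imaginary axis. Writing $u=x-a$, $v=y+a-a^3/3$, the variational equation of \eqref{eq:FhNdelay} is $u'_t=(1-a^2+J)u_t-Ju_{t-\tau}+v_t$, $v'_t=-\varepsilon u_t$; substituting $(u_t,v_t)=e^{\lambda t}(u_0,v_0)$ and eliminating $v_0$ gives the characteristic function
\begin{equation*}
\Delta(\lambda;a,\tau)=\lambda^2-(1-a^2+J)\lambda+\varepsilon+J\lambda e^{-\lambda\tau}.
\end{equation*}
Since $\Delta(0;a,\tau)=\varepsilon>0$ the value $\lambda=0$ is never a root, and since $|e^{-\lambda\tau}|\le 1$ on $\{\operatorname{Re}\lambda\ge 0\}$ the $\lambda^2$ term dominates there, so the right-half-plane spectrum of this DDE is finite, bounded, and depends continuously on $(a,\tau)$. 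Consequently the number of roots with $\operatorname{Re}\lambda\ge 0$ is constant on any connected set of parameters free of purely imaginary roots, and the whole argument reduces to locating those.

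Setting $\lambda=i\omega$ with $\omega>0$ and separating real and imaginary parts of $\Delta=0$ gives $\omega^2-\varepsilon=J\omega\sin(\omega\tau)$ together with $\cos(\omega\tau)=1+\tfrac{1-a^2}{J}=:c(a)$. The latter forces $|c(a)|\le 1$, i.e.\ $a^2\in[1,1+2J]$. Hence when $|a|>\sqrt{1+2J}$ or $|a|<1$ there is no purely imaginary root for any $\tau\ge 0$, so the count of unstable roots equals its value at $\tau=0$; there $\Delta=\lambda^2-(1-a^2)\lambda+\varepsilon$ is a quadratic whose two roots lie in $\{\operatorname{Re}\lambda<0\}$ when $1-a^2<0$ and in $\{\operatorname{Re}\lambda>0\}$ when $1-a^2>0$ (as $\varepsilon>0$). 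This proves the first two bullets.

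For $a\in[1,\sqrt{1+2J}]$ I would solve the two imaginary-axis equations explicitly. With $A=A(a):=\sqrt{(a^2-1)(1+2J-a^2)}$ one has $J\sqrt{1-c(a)^2}=A$, hence $J\sin(\omega\tau)=\pm A$ and $\omega^2\mp A\omega-\varepsilon=0$, whose positive roots are $\omega_1=\tfrac12(A+\sqrt{A^2+4\varepsilon})$ (paired with $\sin(\omega\tau)=A/J\ge 0$) and $\omega_2=\tfrac12(\sqrt{A^2+4\varepsilon}-A)>0$ (paired with $\sin(\omega\tau)=-A/J\le 0$); imposing $\cos(\omega\tau)=c(a)$ then pins $\omega\tau$ down modulo $2\pi$, giving $\omega_1\tau\in\cos^{-1}(c(a))+2\pi\N$ and $\omega_2\tau\in 2\pi\N-\cos^{-1}(c(a))$, and dividing by $\omega_1$ resp.\ $\omega_2$ yields precisely the branches $\tau_1^k$, $\tau_2^k$ of \eqref{eq:taustar}. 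To see that they assemble into a single smooth curve, I would reparametrize the whole imaginary-axis locus by $\theta=\omega\tau\in(0,\infty)$: from $\cos\theta=c(a)$ one solves $a(\theta)=\sqrt{1+J(1-\cos\theta)}\in[1,\sqrt{1+2J}]$, from the real-part equation $\omega(\theta)=\tfrac12\bigl(J\sin\theta+\sqrt{J^2\sin^2\theta+4\varepsilon}\bigr)>0$, and $\tau(\theta)=\theta/\omega(\theta)$; all three are smooth and $(a'(\theta),\tau'(\theta))$ never vanishes (at $\theta=k\pi$, $a'=0$ but $\tau'\ne 0$), so $\theta\mapsto(a(\theta),\tau(\theta))$ is a smooth immersed curve. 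On $\theta\in(2k\pi,(2k+1)\pi)$ it coincides with $\tau_1^k$, with $a$ running from $1$ to $\sqrt{1+2J}$, and on $((2k+1)\pi,(2k+2)\pi)$ with $\tau_2^k$, with $a$ running back, the branches matching smoothly at the turning points $\theta\in\pi\N$ (where $\tau_2^k$ joins $\tau_1^{k+1}$ at $a=1$, and $\tau_1^k$ joins $\tau_2^k$ at $a=\sqrt{1+2J}$); distinct branches may cross transversally in the interior, which is why one only claims ``at least one pair'' of purely imaginary eigenvalues. Finally, a routine transversality computation — differentiate $\Delta=0$ in $\tau$ and use that $\operatorname{Re}\bigl(\tfrac{d\lambda}{d\tau}\bigr)$ has the sign of $\operatorname{Re}\bigl(\tfrac{d\lambda}{d\tau}\bigr)^{-1}=-\operatorname{Re}\tfrac{\partial_\lambda\Delta}{\partial_\tau\Delta}$, evaluated at $\lambda=i\omega$ with $\Delta(i\omega)=0$ — shows the conjugate pair crosses the axis with nonzero speed, so this curve is a genuine curve of Hopf bifurcations and contains the entire stability boundary of the fixed point, the stability changing when it is crossed transversally.

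Regarding difficulty: the algebra producing $\Delta$ and the explicit $\tau_i^k$, and the $\tau=0$ quadratic, are routine, and the transversality check is mechanical. The two delicate points are (i) invoking the standard spectral facts for this DDE — finiteness and continuity in parameters of the right-half-plane spectrum — which licenses the ``constant count between crossings'' step used in the first two bullets, and (ii) the global bookkeeping: verifying that $\{\tau_1^k,\tau_2^k\}$ exhaust the imaginary-axis locus and glue into one smooth (immersed) curve, for which the $\theta$-parametrization above is the crucial device. I expect (ii) to be the main obstacle.
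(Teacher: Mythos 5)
Your proof is correct, and its computational core --- the characteristic function $\Delta(\lambda)=\lambda^2-(1-a^2+J)\lambda+\varepsilon+J\lambda e^{-\lambda\tau}$, the split into $\cos(\omega\tau)=1+\tfrac{1-a^2}{J}$ and $\omega^2-\varepsilon=J\omega\sin(\omega\tau)$, and the frequencies $\tfrac12(\pm A+\sqrt{A^2+4\varepsilon})$ yielding the branches $\tau_1^k,\tau_2^k$ --- coincides with the paper's. You diverge in two places, both arguably to your advantage. For the first two bullets the paper works directly on the real part and the modulus of $Je^{-\xi\tau}=-\varepsilon/\xi-\xi+(1-a^2+J)$ to exclude roots in one half-plane or the other (and its instability argument for $|a|<1$ only excludes roots with $\alpha<0$, via an inequality $J>Je^{-\alpha\tau}$ whose direction is dubious for $\alpha<0$); your route --- no imaginary-axis roots outside $a^2\in[1,1+2J]$, $\Delta(0)=\varepsilon\neq0$, constancy of the finite right-half-plane root count, and evaluation of the quadratic at $\tau=0$ --- is cleaner and yields the actual unstable-root count (two for $|a|<1$, zero for $|a|>\sqrt{1+2J}$) rather than a one-sided exclusion. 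For the assembly of the branches into a single smooth curve, the paper computes local $\sqrt{a-1}$ and $\sqrt{a-a^*}$ expansions at the two turning points and matches them pairwise; your global parametrization by $\theta=\omega\tau$ subsumes all these local matchings at once and also settles exhaustiveness of the list $\{\tau_1^k,\tau_2^k\}$, which the paper leaves implicit. You additionally supply the transversal-crossing check that the paper omits at this stage. One small caveat: your parenthetical claim that $\tau'(\theta)\neq0$ at $\theta=k\pi$ is not automatic, since $\tau'(k\pi)=\big(\sqrt{\varepsilon}-(-1)^k k\pi J/2\big)/\varepsilon$ can vanish for even $k\ge2$ when $\varepsilon=(k\pi J/2)^2$; this is harmless in the regime of interest ($\varepsilon$ small, or $k=0$), and the paper's own expansion coefficients exhibit the same possible degeneracy, but the immersion statement should be qualified accordingly.
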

We will refine this result in section~\ref{sec:Lyapu} by showing that along the branch $\tau_1^0$ the system undergoes a Hopf bifurcation, whose sub- or super-critical type depends on the value of the parameters $(J,\tau)$ and changes along a curve approaching $J\tau=1$ when $\eps\to 0$.

\begin{proof}
	The stability of the fixed point $(a,a-a^3/3)$ is characterized by the sign of the characteristic roots $\xi$ of the system. These are found as solutions of the characteristic equation obtained by linearizing the system at the fixed point. The linearized equations are:
	\[\begin{cases}
		\dot{\tilde{x}}_t=(1-a^2 +J)\,\tilde{x}_t + \tilde{y}_t - J\tilde{x}_{t-\tau}\\
		\dot{\tilde{y}}_t = -\varepsilon \tilde{x}_t
	\end{cases}\]
	and the characteristic equation is obtained when looking for solutions of the form $(\tilde{x},\tilde{y})e^{\xi t}$. Substituting this into the linearized equation and denoting by we readily obtain:
	\[\begin{cases}
		\xi \tilde{x} =(1-a^2 +J (1-e^{-\xi\tau}))\,\tilde{x} + \tilde{y}\\
		\xi \tilde{y} = -\varepsilon \tilde{x}
	\end{cases}\]
	which has non-trivial solutions when $\det(\xi I_2 - \M)=0$ with 
	\[\M=\left(\begin{array}{cc}
		1-a^2+J(1-e^{-\xi\tau}) & 1\\
		-\varepsilon & 0
	\end{array}\right)\]
	i.e.:
	\[\xi(\xi-1+a^2-J(1-e^{-\xi\tau})) + \varepsilon =0.\]
	This equation corresponds exactly to equation~\eqref{eq:CharactRootsFast} when $\eps\to 0$. {Similarly as in the fast case this equation can be solved by elementary algebra}. In contrast with the fast system, $\xi=0$ is never a solution when $\eps>0$. This allows to write down the equations:
	\[J e^{-\xi\tau}=-\frac{\varepsilon}{\xi}-\xi + (1-a^2+J).\]
	
	Denoting $\xi=\alpha+\mathbf{i}\beta$ and taking the real part of both sides of the equality, we obtain the equation:
	\[Je^{-\alpha\tau}\cos(\beta\tau) = -\frac{\eps \alpha}{\alpha^2+\beta^2}-\alpha +(1-a^2+J).\]
	If $\alpha>0$, we therefore necessarily have 
	\[-1\leq e^{-\alpha\tau}\cos(\beta\tau)\leq 1+\frac{1-a^2}{J}\]
	and there is no possible pairs of real values $\alpha>0,\beta \in \R$ satisfying this inequality for $a^2 > 1+2J$. Similarly, if $\alpha<0$, taking the modulus on both parts of the equality, we can write down the inequality:
	\[J>Je^{-\alpha\tau} = \sqrt{\left(1-a^2+J - \alpha (\frac{\eps}{\alpha^2+\beta^2}+1)\right)^2 +\beta^2\left(\frac{\eps}{\alpha^2+\beta^2}-1\right)^2 }\geq (1-a^2+J)\]
	which is not possible for $a^2<1$.
	
	The stability of the fixed point necessarily changes within the interval $[1,\sqrt{1+2J}]$, and therefore characteristic roots, which are continuous with respect to the parameters of the system, cross the imaginary axis. Since we already noted that $\xi=0$ is not a possible root, changes of stability necessarily occur when the system  has purely imaginary solutions $\xi=\mathbf{i}\zeta$, that hence satisfy the complex equation:
	\[J e^{-\mathbf{i}\zeta\tau}=\mathbf{i}(\frac{\varepsilon}{\zeta}-\zeta) + (1-a^2+J).\]
	Taking the modulus, one finds, as expected, that solutions exist for any $\vert a\vert \in [1,\sqrt{1+2J}]$ (and only in that interval), and in that case we find two possible characteristic roots (together with their complex conjugate). We choose by convention the two roots corresponding to $\eps/\zeta - \zeta = A$:
	\[\begin{cases}
		\zeta_1=\frac 1 2 (-A-\sqrt{A^2+4\eps})\\
		\zeta_2=\frac 1 2 (-A+\sqrt{A^2+4\eps}).
	\end{cases}\]
	With these expressions of the eigenvalues, we can now solve the dispersion relationship and find the parameters for which Hopf bifurcations occur. Taking the real part, we find that the delay can take the two following values:
	\[\tau_{\pm} = \pm \frac{1}{\zeta} \cos^{-1}\left(1+\frac{1-a^2}{J}\right) + \frac{2k\pi}{\zeta}\]
	with $\zeta=\zeta_1$ or $\zeta=\zeta_2$ and $k\in \mathbb{Z}$. The imaginary part of the dispersion relationship provides an additional equation that is only satisfied when $\tau=\tau_-$. 
	This leads to the formulae~\eqref{eq:taustar} where the $k$ have been fixed to ensure positivity of the curves. It remains to show that these two families of curves actually form together a unique smooth curve in the parameter plane $(a,\tau)$ that tends to infinity as visible in Figure~\ref{fig:snaking}. We demonstrate this fact by showing that the expansion of the different branches of the curve match at second order at the turning points $a\in \{1,\sqrt{1+2J}\}$. At $a=1$, we have:
	\[\begin{cases}
		\tau_1^{k}(J,\eps,a) = \frac{2k\pi}{\sqrt{\eps}} + \frac{K_1^k}{\sqrt{\eps}}\sqrt{a-1}+\mathcal O (a-1)\\
		\tau_2^{k}(J,\eps,a) = \frac{2(k+1)\pi}{\sqrt{\eps}} - \frac{K_1^{k+1}}{\sqrt{\eps}}\sqrt{a-1}+\mathcal O (a-1)
	\end{cases}\]
	with $K_1^k=2/\sqrt{J}-2Jk\pi/\sqrt{\eps}$. This implies that the union of $\tau_1^{k+1}$ and $\tau_2^{k}$, seen as a function of $\tau$, is the graph of a map at least twice continuously differentiable at $\tau=\frac{2(k+1)\pi}{\sqrt{\eps}}$. Similarly, at $a^{*}=\sqrt{1+2J}$ we have:
	\[\begin{cases}
		\tau_1^{k}(J,\eps,a) = \frac{(2k+1)\pi}{\sqrt{\eps}} + \frac{K_2^k}{\sqrt{\eps}}\sqrt{a-a^*}+\mathcal O (a-a^*)\\
		\tau_2^{k}(J,\eps,a) = \frac{2(k+1)\pi}{\sqrt{\eps}} - \frac{K_2^k}{\sqrt{\eps}}\sqrt{a-a^*}+\mathcal O (a-a^*)
	\end{cases}\]
ensuring a smooth connection between $\tau_1^{k}$ and $\tau_2^{k}$.  We eventually notice that $\tau_2^k$ belongs to the interval $[2k+1,2k+2]\pi/\sqrt{\eps}$ and therefore disappear at infinity in the limit $\eps\to 0$. In contrast, $\tau_1^k \leq \tau_f^k$ the $k^{th}$ branch of Hopf bifurcations of the fast system given by equation~\eqref{eq-forHopff} and announced in theorem~\ref{thm:Summary1}: as $\eps\to 0$, the two families become infinitely close in any compact contained in $(1,\sqrt{1+2J})$. We conclude that $\tau_1^0$ is the primary Hopf bifurcation curve for $\tau<\pi/\sqrt{\eps}$, i.e. for fixed $a$, the first value of the delay leading to a change of stability of the fixed point. It smoothly connects $\tau=0$ at $a=1$ to $\tau=\pi/\sqrt{\eps}$ at $a=\sqrt{1+2J}$, where it has a turning point and merges smoothly with $\tau_2^0$ at $a=a^*$, a branch ending at $a=1$ and $\tau=2\pi/\sqrt{\eps}$, and the matching persists for increasing values of $k$:
\begin{itemize}
		\item $\tau_1^k$ connect $2k\pi/\sqrt{\eps}$ at $a=1$ to $(2k+1)\pi/\sqrt{\eps}$ at $a=\sqrt{1+2J}$
		\item $\tau_2^k$ that connects $(2k+1)\pi/\sqrt{\eps}$ at $a=\sqrt{1+2J}$ to $(2k+2)\pi/\sqrt{\eps}$ at $a=1$
		\item $\tau_1^{k}$ and $\tau_2^k$ smoothly connect at $a=\sqrt{1+2J}$ and $\tau_2^k$ smoothly connects to $\tau_1^{k+1}$ at $a=1$.
	\end{itemize}
The figure~\ref{fig:snaking} moreover shows that the curve forms a sequence of loops, visible for $\eps$ large enough. Sufficiently close to the boundary $a=\sqrt{1+2J}$, increasing $\tau$ leads to a series of Hopf bifurcations changing the stability of the fixed point. But as $\eps\to 0$, all this complexity escape to infinity, and the curve $\tau_1^0$ essentially controls the stability of the fixed points for finite delays. 
\end{proof}
	
\begin{figure}
	\centering
		\subfigure[$\eps=0.01$]{\includegraphics[width=.4\textwidth]{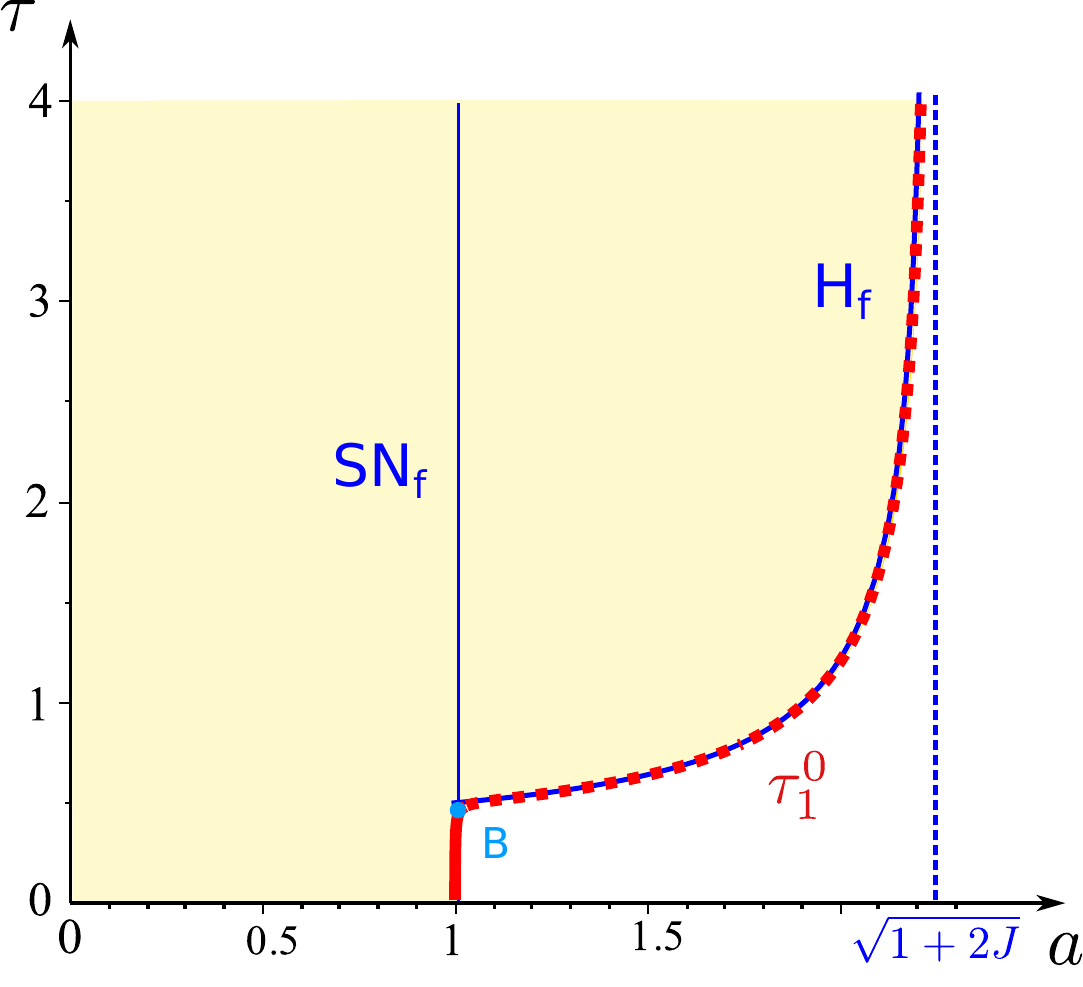}}
		\subfigure[$\eps=2$]{\includegraphics[width=.55\textwidth]{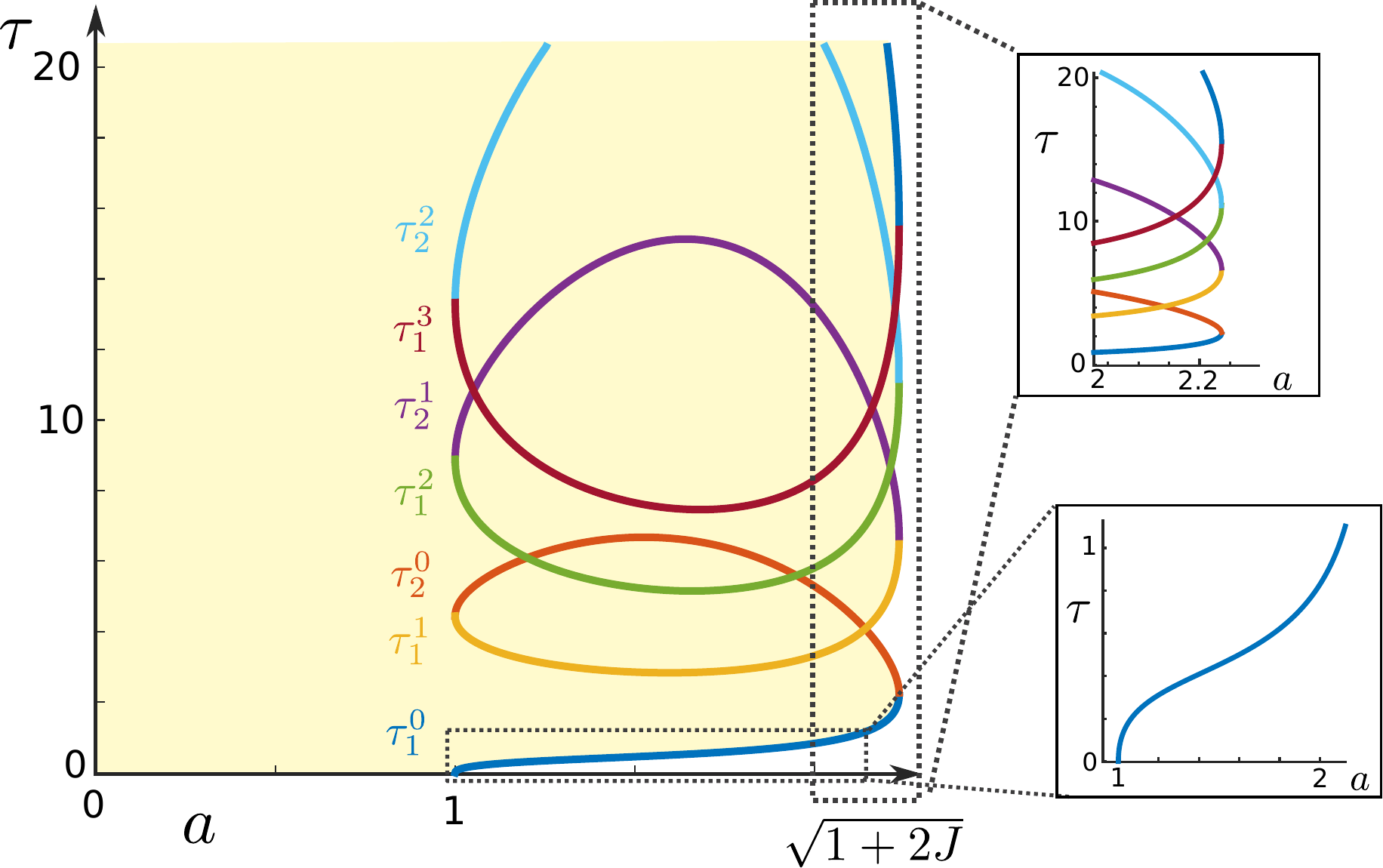}\label{fig:snaking}}
	\caption{Hopf bifurcations for system~\eqref{eq:FhNdelay} as a function of $(a,\tau)$ with $J=2$ and  (left) $\eps=0.01$: the red curve represents $\tau_1^0$ which is the only visible Hopf bifurcation in that range of parameters. The Hopf bifurcation is supercritical below the Bautin point B (solid line) and subcritical above (dotted line). Superimposed is the bifurcations of the fast system~\eqref{eq:FastDelayed}  with $J=2$ (blue curves). SNf: saddle node, Hf: subcritical Hopf, closely approaching $\tau_1^0$ on $J\tau<1$ and $J\tau>1$ respectively. (right): $\eps=2$ shows how the different branches of Hopf bifurcation form a single smooth curve. In the range of values plotted, we see $\tau_1^k$ and $\tau_2^k$ for $k\in\{0\cdots 3\}$.  }
	\label{fig:HopfTotalSystem}
\end{figure}

\subsection{Criticality of Hopf bifurcations}\label{sec:Lyapu}

\revision{

In this section we compute the first Lyapunov coefficient $\lambda$ of the normal form at the primary Hopf bifurcation at $\tau=\tau_1^0(J,\eps,a)$. We show the following transition:
\begin{prop}\label{pro:LyapuTrans}
	In the limit $\eps\to 0$, the Hopf bifurcation is supercritical for $J\tau<1$ and subcritical otherwise. 
\end{prop}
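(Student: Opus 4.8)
The plan is to reduce the Hopf bifurcation along the primary branch $\tau=\tau_1^0(J,\eps,a)$ to its planar normal form by the classical centre--manifold construction for retarded functional differential equations, read off the first Lyapunov coefficient as an explicit function of $(a,J,\eps)$ restricted to that branch, and then analyse its sign as $\eps\to0$. \textbf{Setup.} Moving the equilibrium $(a,-a+a^3/3)$ to the origin by $u=x-a$, $v=y+a-a^3/3$, and using $x-\tfrac{x^3}{3}=(a-\tfrac{a^3}{3})+(1-a^2)u-au^2-\tfrac13u^3$, system~\eqref{eq:FhNdelay} becomes $\dot z_t=L_0 z_t+L_1 z_{t-\tau}+F(z_t)$ with linear parts $\dot u_t=(1-a^2+J)u_t+v_t-Ju_{t-\tau}$, $\dot v_t=-\eps u_t$, and nonlinearity $F(z)=\bigl(-au^2-\tfrac13u^3,\,0\bigr)^{\mathsf T}$, which is supported on the fast coordinate and is pointwise (undelayed). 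By Proposition~\ref{pro:HopfFull} and its proof, at $\tau=\tau_1^0$ the characteristic equation has a simple conjugate pair of roots $\pm i\zeta$ with $\zeta=\tfrac12\bigl(A+\sqrt{A^2+4\eps}\bigr)$ and $A=\sqrt{(a^2-1)(1+2J-a^2)}$, no other root on the imaginary axis, and one computes $e^{-i\zeta\tau}=\bigl(J+1-a^2-iA\bigr)/J$, i.e. $J\cos\zeta\tau=J+1-a^2$ and $J\sin\zeta\tau=A$; I would also record the transversality $\tfrac{d}{d\tau}\operatorname{Re}\xi\big|_{\tau_1^0}\neq0$, obtained by implicit differentiation of the characteristic equation. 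Hence the centre manifold is two--dimensional.

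\textbf{Normal form.} I would then apply the Hassard--Kazarinoff--Wan reduction (equivalently the Faria--Magalh\~aes normal form). The centre eigenfunction is $q(\theta)=\mathbf v\,e^{i\zeta\theta}$ and the adjoint eigenfunction $q^*(s)=\mathbf w\,e^{i\zeta s}$, with $\mathbf v$ a right and $\mathbf w$ a left eigenvector associated with $i\zeta$, normalised by $\langle q^*,q\rangle=1$ in the bilinear form $\langle\psi,\phi\rangle=\overline\psi(0)\phi(0)+\int_{-\tau}^{0}\overline\psi(\sigma+\tau)L_1\phi(\sigma)\,d\sigma$ (the integral collapses to a single term proportional to $\tau$ since the delay is discrete). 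Projecting $F$ onto the centre manifold yields the coefficients $g_{20},g_{11},g_{02},g_{21}$; because $F$ depends only on the first coordinate, these are polynomials in $v_1,\bar v_1,\bar w_1$ with coefficients built from $-2a$ (quadratic) and $-2$ (cubic), together with the first components of the quadratic centre--manifold corrections $w_{20}(0),w_{11}(0)$, which are obtained by solving the associated linear boundary--value problem, i.e. by inverting $2i\zeta I-L_0-L_1e^{-2i\zeta\tau}$ and $-L_0-L_1$. The first Lyapunov coefficient is then proportional to
\[
\operatorname{Re}c_1(0),\qquad c_1(0)=\frac{i}{2\zeta}\Bigl(g_{20}g_{11}-2|g_{11}|^2-\tfrac13|g_{02}|^2\Bigr)+\frac{g_{21}}{2},
\]
and the bifurcation is supercritical exactly when $\operatorname{Re}c_1(0)<0$.

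\textbf{The limit.} After substituting $\tau=\tau_1^0$, every trigonometric quantity is replaced by the rational expressions above and $\operatorname{Re}c_1(0)$ becomes an explicit function of $(a,J,\eps)$; the remaining task is to determine the sign of its small--$\eps$ behaviour. For fixed $(J,\tau)$ with $J\tau>1$, the equilibrium at which $\tau_1^0=\tau$ stays in a compact subset of $a\in(1,\sqrt{1+2J})$, $\zeta\to A$, and $\operatorname{Re}c_1(0)$ converges to the first Lyapunov coefficient of the Hopf bifurcation~\eqref{eq-forHopff} of the \emph{fast} scalar delay equation, which is positive (subcritical) --- this follows from the same, scalar, computation, or from the subcritical Bogdanov--Takens point of the fast system at $J\tau=1$ in Theorem~\ref{thm:Summary2}. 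For fixed $(J,\tau)$ with $J\tau<1$, the Hopf occurs at $a=a(\eps)\to1$, i.e. one approaches the fast Bogdanov--Takens point $(a,J\tau)=(1,1)$; here $\zeta\sim\sqrt\eps$ with $a-1$ and $\zeta^2$ both of order $\eps$, and I would perform a matched expansion in this distinguished scaling to show that $\operatorname{Re}c_1(0)$ carries the sign of $J\tau-1$, hence is negative (supercritical). Together with the transversality above this proves the dichotomy, and it shows in passing that $\operatorname{Re}c_1(0)$ vanishes along the $\eps>0$ Hopf curve near $J\tau=1$, producing the Bautin point announced after Theorem~\ref{thm:Summary1}.

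\textbf{Main obstacle.} The work is concentrated in two algebraic steps. First, the DDE normal--form computation itself: the nonzero quadratic term $-au^2$ makes the centre--manifold corrections $w_{20},w_{11}$ contribute genuinely to $g_{21}$, and one must simplify $\operatorname{Re}c_1(0)$ along $\tau=\tau_1^0$ using the two scalar identities $J\cos\zeta\tau=J+1-a^2$, $J\sin\zeta\tau=A$ into a manageable closed form. Second --- and this is the real difficulty --- the $J\tau<1$ regime forces the simultaneous limit $\eps\to0$, $a\to1$, where the naive $\eps\to0$ limit at fixed $a$ is degenerate (it only sees $J\tau_f^0\ge1$); one has to resolve the unfolding of the fast Bogdanov--Takens point in the scaling $a-1\sim\zeta^2\sim\eps$ to make $\operatorname{sign}(J\tau-1)$ appear. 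By comparison, the transversality condition and the absence of further imaginary roots at $\tau_1^0$ are routine, the latter being essentially contained in the proof of Proposition~\ref{pro:HopfFull}.
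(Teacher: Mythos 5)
Your overall strategy (centre--manifold/normal--form reduction along $\tau_1^0$, then a sign analysis of $\operatorname{Re}c_1(0)$ as $\eps\to0$) is reasonable, but the decisive step is not carried out, and it is exactly the step where the paper does something different. For $J\tau<1$ you correctly identify that the Hopf point sits in the distinguished regime $a-1\sim\zeta^2\sim\eps$ near the fast Bogdanov--Takens point, but you then only announce ``I would perform a matched expansion\dots to show that $\operatorname{Re}c_1(0)$ carries the sign of $J\tau-1$''; no mechanism is exhibited that actually produces that sign, so the supercritical half of the dichotomy is asserted rather than proved. The paper avoids this degenerate limit of the explicit DDE Lyapunov formula altogether: it appends $\eps'=0$, performs the Faria--Magalh\~aes centre--manifold reduction once and for all near $a=1$, and lands on the planar slow--fast system \eqref{eq-syscm}, which is the van der Pol canard normal form perturbed by the term $a_1\eps\tilde x$ with $a_1=J\tau^2/(2(1-J\tau))$ as in \eqref{eq-defa1}. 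The singular--Hopf (canard point) structure of that planar system, via the standard formula (3.4.11) of Guckenheimer--Holmes, gives $\mathrm{Lyap}=-\sqrt\eps/8+O(\eps)$ directly, i.e.\ the leading order is negative independently of $J\tau<1$ (the constraint $J\tau<1$ enters only through the validity of the reduction, e.g.\ $a_1$ blowing up as $J\tau\to1$). So the paper's sign does not come from a matched expansion of $\operatorname{Re}c_1(0)$; if you want to keep your route you must actually resolve the $a-1\sim\eps$ scaling, which is precisely the computation you deferred.

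For $J\tau>1$ your argument is closer to the paper's, but two points need attention. First, the convergence of the full-system Lyapunov coefficient to that of the fast subsystem as $\eps\to0$ is \emph{not} automatic: as the paper notes (citing Zhang--Kirk--Sneyd--Wechselberger and Guckenheimer--Osinga), the limit can be discontinuous when the slow equation carries quadratic terms; here one must observe that the slow equation of \eqref{eq:FhNdelay} is affine in $x$, so no such terms are generated by the reduction, and only then is the limit continuous. You assert the convergence without this justification. Second, the positivity of the fast Lyapunov coefficient cannot be deduced from the subcritical Bogdanov--Takens point alone: that gives positivity only in a neighbourhood of $J\tau=1$, $y=\pm2/3$, whereas the claim is needed along the whole fast Hopf curve; in the paper this global positivity is established only by numerical evaluation of the exact normal-form expression (Proposition~\ref{pro:bifurcFast} and Figure~\ref{fig:Lyap}), so your ``follows from the same scalar computation'' also ultimately rests on that evaluation rather than on a proof.
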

\begin{proof}
	We first focus on the case when $J\tau<1$. Note that in this case the Hopf bifurcation is found very close to the line
	$a=1$. Consider the extended system obtained from \eqref{eq:FhNdelay} by adding the equation $\eps'=0$. For this system there a center manifold reduction, based on the classical theory developed, for example, in~\cite{faria-magalhaes:95}.
	We omit the straightforward details of this computation, that lead to show that on the center manifold, the reduction followed by a reflection in the $\tilde x$ variable, yields the following system on the center manifold:
	\begin{equation}\label{eq-syscm}
	\begin{cases}
		\der{\tilde{x}}{\theta} =  -\tilde{y}+\tilde{x}^2-\frac{\tilde{x}^3}{3}+a_1\eps\tilde x+ \mathcal O (\tilde y^2, \tilde x^2\tilde y, \tilde x^4)\\
		\der{\tilde{y}}{\theta} = \varepsilon (\tilde{x}-\tilde{a}+ \mathcal O (\eps\tilde y^2, \eps\tilde x\tilde y,\eps\tilde x^2)),
	\end{cases}
	\end{equation}
	where $\tilde a=1-a$,
	\begin{equation}\label{eq-defa1}
	a_1=\frac{J\tau^2}{2(1-J\tau)},
	\end{equation}
	If the higher order terms are omitted \eqref{eq-syscm} differs from the classical van der Pol system by the term $a_1\eps\tilde x$. Note that $a_1$ is positive
	and blows up as $\tau$ approaches $1/J$. 

	System \eqref{eq-syscm} has a canard point (singular Hopf point) at an associated Hopf bifurcation, which is the same as computed in Section \ref{sec-Hopfexist}.
	A straightforward application of formula (3.4.11) in \cite{guckenheimer-holmes:83} yields the first Lyapunov coefficient for this Hopf  bifurcation:
	\begin{equation}\label{eq-LiapcoJtau<1}
	\lambda = -\sqrt\eps/8+ \mathcal O (\eps).
	\end{equation}
	Hence the bifurcation is supercritical and yields a stable limit cycle. 

	Concerning the case $J\tau >1$ we will compute in Section \ref{sec:fastHopf} the Lyapunov coefficient in the context of  Hopf bifurcation in the fast subsystem of \eqref{eq:FhNdelay}, obtaining that this coefficient is positive.
	The work of the \cite{kirk_wechselberger} (see also~\cite{guckenheimer-osinga}) shows that the fast Hopf bifurcation perturbs to a nearby Hopf bifurcation of the full system with $\eps>0$, yet the Lyapunov coefficient of this Hopf bifurcation 
	may not converge to the Lyapunov coefficient of the fast Hopf bifurcation as $\eps\to 0$.  This discontinuity relies on the presence of quadratic terms in the slow equation. Note that for \eqref{eq:FhNdelay} no such terms are present
	and that when $\eps=0$ the slow variable is a parameter, so that the center manifold reduction does not introduce quadratic terms without a factor of $\eps$ (see the remainder terms in \eqref{eq-syscm}). 
	Hence it is easy to check using the approach \cite{kirk_wechselberger} that in this case the Lyapunov coefficient is continuous in the limit $\eps\to 0$. We conclude that Hopf bifurcation for $J\tau>1$ is subcritical and leads to unstable periodic orbits.
	
\end{proof}

For $\eps>0$ it is possible to compute the Lyapunov coefficient $\lambda$ directly using the normal form reduction, but the expressions become complicated. Aided by formal calculation software (Maple\textregistered), we did the calculation of the first Lyapunov coefficient along the Hopf bifurcation line $\tau_1^0(J,\eps,a)$ following the classical theory~\cite{faria1995normal,campbell2009calculating}. Details are provided in appendix~\ref{append:NormalForms}. {The very complicated expression of the Lyapunov coefficient obtained in this manner is} evaluated numerically and confirms that there exists $\tau_s(\eps,J)>0$, approaching $1/J$ for small $\eps$, such that for $\tau<\tau_s(\eps,J)$, the bifurcation is supercritical and for $\tau>\tau_s(\eps,J)$, it is subcritical. A plot of the Lyapunov coefficient for $\eps=0.1$ as a function of $J$ and $\tau$ appears in Figure~\ref{fig:Lyapunov} (A). The change of sign of the Lyapunov coefficient occurs along a line $\tau_s$ closely following $\tau=1/J$. For $\tau<1/J$, the Lyapunov coefficient is negative and of small absolute value. It then sharply jumps towards higher values before slowly returning to smaller positive values. This is visible in the zoom of the Lyapunov coefficient for $J=2$. This is perfectly consistent with the properties that: (i) the Lyapunov coefficient is of order $\sqrt{\eps}$ for $J\tau<1$, and (ii) the remark that the Lyapunov coefficient of the fast system tends to infinity at $J\tau \to 1$.  
Eventually, we note that at $\tau=\tau_s$, the the first Lypaunov coefficient changes sign in a non-degenerate manner, and the map that associates to $\tau$ the eigenvalues of the Jacobian matrix and the first Lyapunov coefficient is regular. We conjecture that the system undergoes a Bautin bifurcation at this point. In order to rigorously demonstrate the presence of this bifurcation in the system, one needs to compute the second Lyapunov coefficient at this point and show that it is not zero: this amounts to extending the analysis performed in appendix~\ref{append:NormalForms} to access terms of order $5$ of the reduction of the system on the center manifold, leading to tedious computations. 
}

\begin{figure}[h]
	\centering
		\includegraphics[width=.7\textwidth]{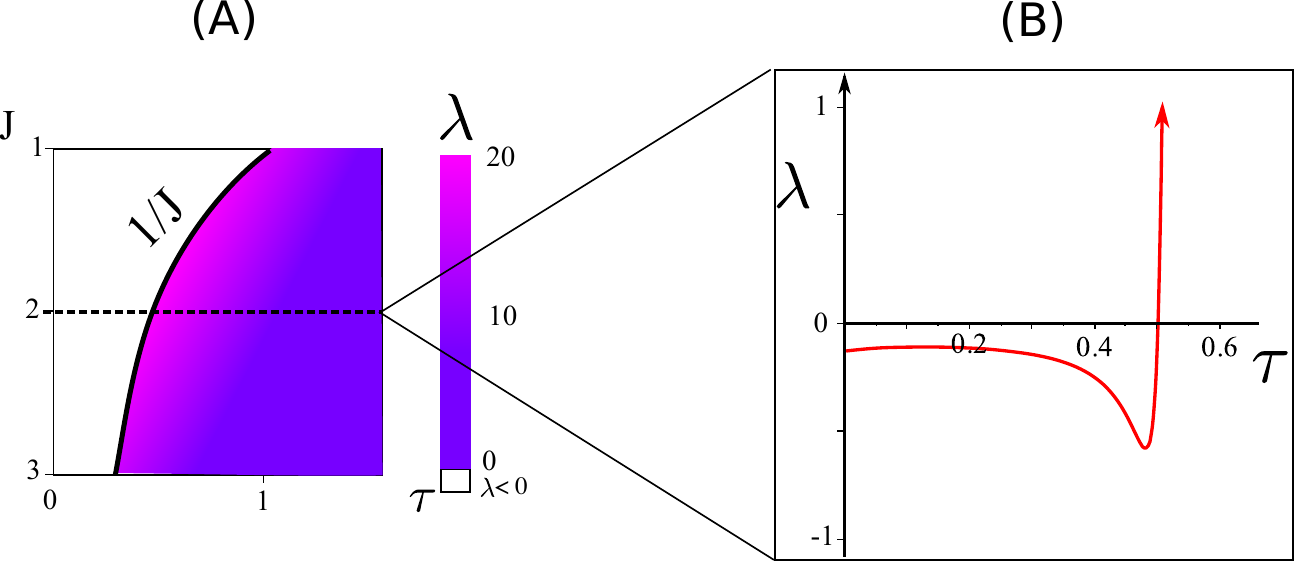}
	\caption{{First Lyapunov coefficient of the Hopf bifurcation at $\tau=\tau_1^0(J,\eps,a)$ with $\eps=0.01$. (A) Lyapunov coefficient as a function of $J$ and $\tau$. Negative values are plotted in white: the Lyapunov coefficient changes sign along a curve closely approximating $\tau=1/J$. (B) We zoomed for $J=2$ on the negative part of the curve of Lyapunov coefficients. We observe the sharp change occurring close to $0.503$ and disappears to positive values (red arrow).}}
	\label{fig:Lyapunov}
\end{figure}
\subsection{The nature of periodic orbits: numerical explorations}
The presence of Hopf bifurcations marks the transition between regions of parameters where the fixed point is stable or unstable: in particular, in the parameter region depicted in yellow in Figure~\ref{fig:HopfTotalSystem}, the fixed point is unstable, and oscillations or chaotic solutions arise. In this section we give a preview of what can happen as $\tau$ grows for fixed $a>1$, $\eps>0$ and $J>0$. The topic of section \ref{sec:Complex} will be to use the GSPT to uncover the dynamical origin of these phenomena. 
Distinct qualitative behaviors as $\tau$ is increased, summarized in Figure~\ref{fig:AllBehaviors}. 

For values of the delay smaller than the value corresponding to the Hopf bifurcation ($\tau= \tau_1^0(J,\eps,a)\simeq 0.429$), the fixed point $(v_a,w_a)$ remains stable, and precisely at crossing the Hopf bifurcation line, a small cycle appears. In a very small range of delay values, the cycle {can} become more complex, indicating the possible presence of period doubling bifurcations. A cascade of such bifurcations seem to lead to a chaotic behavior as delays are further increased (although in a very small neighborhood of the Hopf bifurcation), as depicted in Figure~\ref{fig:AllBehaviors}-\emph{Small Oscillations}.

As the delay is further increased { a sudden switch to large oscillations takes place, a} hallmark of the presence of canard {transitions} in the system. These oscillations are more complex than relaxation oscillations emerging in two-dimensional slow-fast systems. They show the typical shape of Mixed-Mode oscillations (see Figure~\ref{fig:AllBehaviors}-\emph{MMOs}), with three distinct phases: one small oscillation related to the ghost of the small periodic orbit (highlighted in blue boxes in the figure, `classical' MMO small oscillations), small oscillations related to the properties of the convergence of the delayed fast equation towards the critical manifold (red boxes), and large relaxation oscillations corresponding to switches between the two attractive parts of the critical slow manifold. We will study this phenomenon more in details in section~\ref{sec:MMOs}.
\begin{figure}[!h]
	\centering
		\includegraphics[width=.9\textwidth]{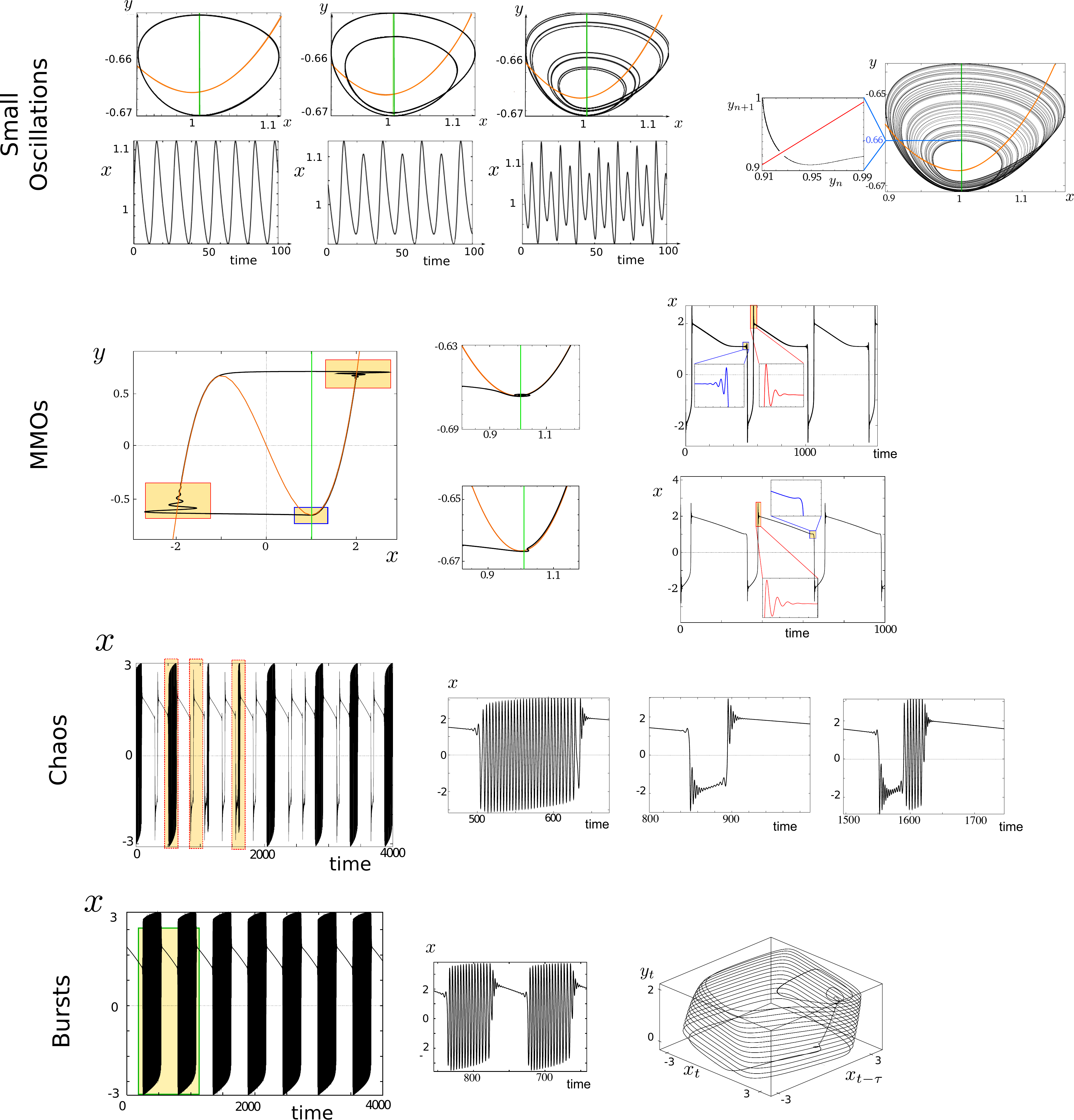}
	\caption{Trajectories of the full system~\eqref{eq:FhNdelay} with $a=1.01$, $J=2$ and $\varepsilon=0.05$ as delays are increased. (Small Oscillations) from left to right, $\tau=0.4$, $\tau=0.401$ and $\tau=0.408$: The cycle arising from the Hopf bifurcation undergoes period doubling bifurcations and chaos for $\tau=0.41$, as illustrated by the Ruelle plot on a Poincar\'e section (blue line at $y=-0.66$). (MMOs) $\tau=0.51$ (left) $\tau=0.55$ (right). (Chaos) $\tau=0.7$: time series shows irregular alternation of bursts, MMOs and mixed behaviors between MMO and burst, (Bursts) $\tau=1$. }
	\label{fig:AllBehaviors}
\end{figure}

As the delay is further increased, these MMOs suddenly disappear in favor of bursting periodic orbits (see Figure~\ref{fig:AllBehaviors}-\emph{(Bursts)}), characterized by the presence of very fast oscillations interspersed with periods of slow behavior. The analysis of these orbits will be performed in section~\ref{sec:Bursting}, of which we display time series and trajectories in the extended phase space $(x_t,y_t,x_{t-\tau})$. 

We {discuss} also the behavior found at the transition from MMOs to bursts, {where} the system displays extremely wild chaotic behaviors characterized by irregular switches between MMOs and bursts (Figure~\ref{fig:AllBehaviors}-\emph{(Chaos)}). These phenomena will be accounted for in section~\ref{sec:ChaosTransition}. In order to illustrate the chaotic nature of the trajectory, it is generally convenient to display a Ruelle plot, as we did for the chaotic small oscillation. But here, Ruelle plots on Poincar\'e section in the axes $(x,y)$ are scattered over very complex attractors that do not seem to be one-dimensional, and do not provide a concise information of the chaotic nature of the trajectory. Rather, we found that the value of $x$ at time $t-\tau$ at crossing a Poincar\'e section in $y$ shows a lower-dimensional behavior that we display in Figure~\ref{fig:Poincare}.

\begin{figure}[h]
	\centering
		\includegraphics[width=.25\textwidth]{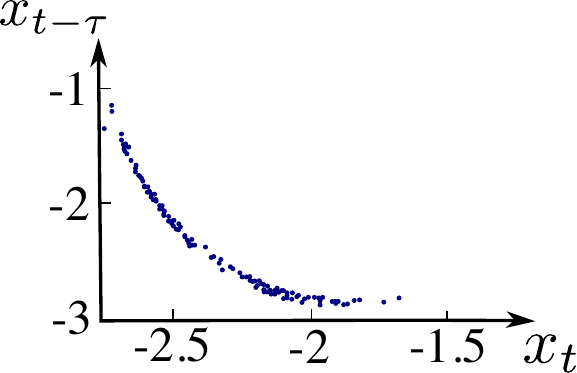}
	\caption{Sequence of values of $x_{t_n-\tau}$ where $t_n$ are the consecutive times the trajectory crosses the Poincar\'e section $y=-0.4$. Parameters $J=2$, $\eps=0.05$ and $\tau=0.7$. }
	\label{fig:Poincare}
\end{figure}

\section{Hopf bifurcations in the fast dynamics}\label{sec:fastHopf}
In order to {explain} the complex dynamics presented in the previous section, the singular perturbation theory {takes into the account} properties of both the fast and the slow flow in the limit $\varepsilon = 0$.
In the case of \eqref{eq:FhNdelay} the slow dynamics is very simple and the challenge is to understand the fast dynamics given by the solution of equation~\eqref{eq:Delayed} in the limit $\eps = 0$:
\begin{equation}\label{eq:FastDelayed}
			x'_t=x_t-\frac{x_t^3}{3}+y+ J (x_t-x_{t-\tau})
\end{equation}
The slow variable $y$ is now looked at as a parameter of the dynamics. 
An important object are families of equilibria of \eqref{eq:FastDelayed}, known also as critical manifolds, as they give rise 
to slow manifolds of \eqref{eq:FhNdelay}. In this section we identify the critical manifolds of \eqref{eq:FastDelayed}, characterize their stability and find non-hyperbolic bifurcation points (Hopf) induced by the presence of delays. 

\subsection{The critical manifold}
The first step in analyzing the fast equation is to characterize its fixed points (critical manifold). These are given by the solutions to the algebraic equation:
\begin{equation}\label{eq:FPFast}
	x-\frac{x^3}{3}+y=0.
\end{equation}
We therefore have three fixed points when $\vert y \vert < \frac 2 3$ and one fixed point otherwise. A closed from expression is obtained using Cardano's method:
\begin{itemize}
	\item for $\vert y\vert >2/3$, the unique solution is given by:
	\[x_0=\left(\frac{3y+\sqrt{9 y^2 -4}}{2}\right)^{1/3}+\left(\frac{3y-\sqrt{9 y^2 -4}}{2}\right)^{1/3}\]
	\item  for $\vert y \vert < 2/3$, $\Delta<0$, the three solutions are given by
	\[x_k=2\cos\left(\frac 1 3 \cos^{-1}\left(\frac{3y}{2}\right) + \frac{2k \pi}{3}\right) \qquad, \qquad k=0,1,2,\]
	that we order and denote $x_-(y)<x_0(y)<x_+(y)$. 
	\item Eventually, for $ y=\pm2/3$, we have a double root $x=\mp 1$ and a simple root $x=\pm 2$. 
\end{itemize} 
The critical manifold is hence made of three branches of fixed points: the branch $x_+(y)\geq 1$ defined for $y\geq -2/3$, $x_-(y)\leq -1$ for $y\leq 2/3$, and $x_0(y) \in [-1,1]$ for $y\in[-2/3,2/3]$.

\subsection{Stability of fixed points on the critical manifold}
In the absence of delays, $x_0(y)$ is always a saddle fixed point while $x_{\pm}(y)$ are stable fixed points, and the fast equation has saddle-node bifurcations at $y=\pm \frac 2 3$. The presence of the delay term, which did not affect the shape of the critical manifold, may affect its stability. The stability of the fixed points on the critical manifold with respect to the dynamics of \eqref{eq:FastDelayed} is characterized in  the following:
\begin{prop}\label{pro:bifurcFast}
	System \eqref{eq:FastDelayed} undergoes the following bifurcations:
	\begin{itemize}
		\item saddle-node bifurcations at $y=\pm \frac 2 3$ that correspond to parameters for which the fixed points $x_\pm (y)$ collide with $x_0(y)$. These bifurcations are independent of  $\tau$.
		\item For any $y \in [-\frac 2 3, \frac 2 3 (J-1)\sqrt{1+2J}]$, there exists a unique value of $\tau$ given by $\tau_f^0(x_{\pm}(y))$ (recall the definition of  $\tau_f^k$ in equation~\eqref{eq-forHopff}) above which the fixed point $x_{\pm}(y)$ loses stability through a generic subcritical Hopf bifurcation. Secondary Hopf bifurcations arise for $\tau=\tau_f^k(a)$ for any $k\in \N\setminus \{0\}$.
	\end{itemize}
	These bifurcations organize the stability of the three fixed points. For $J>0$, their stability is as follows
	\begin{itemize}
		\item For any $\vert y \vert<2/3$ fixed point $x_0(y)$ is a saddle 
		\item For $\vert y \vert > \frac 2 3 (J-1)\sqrt{1+2J}$, the fixed points $x_{\pm}(y)$ are stable regardless of the value of $\tau$
		\item For $\vert y \vert < \frac 2 3 (J-1)\sqrt{1+2J}$, the fixed points $x_{\pm}(y)$ are stable for $\tau <\tau (x_{\pm}(y))$, and unstable otherwise.
	\end{itemize}
\end{prop}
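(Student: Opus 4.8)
The whole statement can be read off from the location, as $\tau$ varies, of the roots $\xi$ of the characteristic function of the linearisation of \eqref{eq:FastDelayed} along the critical manifold. Linearising at a point $(x^{*},y)$ with $x^{*}\in\{x_{-}(y),x_{0}(y),x_{+}(y)\}$ gives $\dot{\tilde x}_{t}=(1-(x^{*})^{2}+J)\tilde x_{t}-J\tilde x_{t-\tau}$, hence $P(\xi):=\xi-(1-(x^{*})^{2})-J(1-e^{-\xi\tau})=0$. The first thing I would record is that on the closed right half-plane $|e^{-\xi\tau}|\le1$, so $|P(\xi)|\ge|\xi|-|1-(x^{*})^{2}|-2J$; hence all roots with $\mathrm{Re}\,\xi\ge0$ stay in a fixed compact set, they are finitely many, and---$P$ being entire with roots continuous in $\tau$---the number of roots in the open right half-plane can change only when a root crosses the imaginary axis. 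At $\tau=0$ the only root is $\xi=1-(x^{*})^{2}$, which reproduces the delay-free picture recalled before the Proposition: instability index $1$ for $x_{0}$ (since $(x^{*})^{2}<1$) and $0$ for $x_{\pm}$ (since $(x^{*})^{2}>1$).

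Next I would pin down every axis crossing. The value $\xi=0$ is a root exactly when $(x^{*})^{2}=1$, i.e.\ $y=\pm\frac{2}{3}$, which is precisely where $x_{\pm}$ and $x_{0}$ coalesce; it is a simple zero root as long as $J\tau\neq1$ (because $P'(0)=1-J\tau$), giving a saddle-node whose position involves neither the critical manifold nor the condition $(x^{*})^{2}=1$ and is therefore independent of $\tau$, whereas for $J\tau=1$ it becomes a double root $P(0)=P'(0)=0$, the Bogdanov--Takens degeneracy of Theorem~\ref{thm:Summary2}. For $\xi=\mathbf{i}\zeta$, $\zeta\neq0$, separating real and imaginary parts gives $\cos(\zeta\tau)=1+\frac{1-(x^{*})^{2}}{J}$ and $\zeta=J\sin(\zeta\tau)$, whose compatibility forces $\zeta^{2}=((x^{*})^{2}-1)(2J+1-(x^{*})^{2})$; a real $\zeta$ exists iff $1\le(x^{*})^{2}\le1+2J$. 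Therefore, for $x_{0}$ there is no imaginary root and $\xi=0$ is never a root, so the right-half-plane count is the constant $1$ and $x_{0}(y)$ is always a saddle; and for $x_{\pm}(y)$ with $(x_{\pm}(y))^{2}>1+2J$---which via the explicit formula for $x_{\pm}(y)$ is exactly the stated inequality on $|y|$---there is again no crossing, so the count is the constant $0$ and $x_{\pm}(y)$ is stable for every $\tau$.

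The remaining regime is $1<(x_{\pm}(y))^{2}<1+2J$. Writing $\zeta=A:=\sqrt{((x^{*})^{2}-1)(2J+1-(x^{*})^{2})}>0$, the imaginary-part relation forces $\sin(\zeta\tau)=\zeta/J>0$, so $\zeta\tau$ lies in $(0,\pi)$ modulo $2\pi$, and the crossing delays are precisely $\tau=\tau_{f}^{k}(x^{*})$, $k\in\mathbb{N}$, with $\tau_{f}^{k}$ as in \eqref{eq-forHopff}. Differentiating $P(\xi,\tau)=0$ yields $d\xi/d\tau=J\xi e^{-\xi\tau}/(1-J\tau e^{-\xi\tau})$; evaluating at $\xi=\mathbf{i}A$ and using $Je^{-\mathbf{i}A\tau}=(1-(x^{*})^{2})+J-\mathbf{i}A$ from the characteristic equation, a short computation would give $\mathrm{Re}\,(d\xi/d\tau)^{-1}=(A^{2}+(1-(x^{*})^{2}+J)^{2})^{-1}>0$, hence $\mathrm{Re}(d\xi/d\tau)>0$: at each $\tau_{f}^{k}$ a conjugate pair crosses the axis transversally, from left to right. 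Since $\pm\mathbf{i}A$ are moreover simple roots---$1-J\tau e^{-\mathbf{i}A\tau}=0$ would, together with the characteristic equation, give $A\tau=0$---and are the only roots on the axis at $\tau=\tau_{f}^{k}$, the right-half-plane count jumps by $2$ at each $\tau_{f}^{k}$: $x_{\pm}(y)$ is stable for $\tau<\tau_{f}^{0}(x_{\pm}(y))$ and unstable beyond, with secondary Hopf bifurcations at $\tau_{f}^{k}$, $k\ge1$. Checking that $\tau_{f}^{0}\to1/J$ as $(x^{*})^{2}\to1$ and $\tau_{f}^{0}\to\infty$ as $(x^{*})^{2}\to1+2J$ recovers the ranges of $y$ and of $\tau$ in the statement and shows the Hopf always lies in the regime $J\tau>1$.

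The only ingredient that goes beyond this root bookkeeping is the assertion that the primary Hopf at $\tau=\tau_{f}^{0}$ is \emph{subcritical} (and generic). The plan is to carry out a centre-manifold/normal-form reduction of the delay equation \eqref{eq:FastDelayed} at $\tau=\tau_{f}^{0}$ and to compute the sign of the first Lyapunov coefficient---the computation announced in Section~\ref{sec:fastHopf}---showing it is strictly positive, which also gives the non-degeneracy needed for genericity. This is the genuinely delicate step, because the cubic nonlinearity of \eqref{eq:FastDelayed} has to be carried through the (infinite-dimensional) normal-form formulae; as a consistency check one should find the coefficient blowing up as $y\to\pm\frac{2}{3}$, matching the Bogdanov--Takens point at $J\tau=1$.
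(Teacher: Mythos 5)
Your proposal follows the same route as the paper: everything is read off the characteristic equation $\xi=1-(x^*)^2+J-Je^{-\xi\tau}$, with $\xi=0$ giving the $\tau$-independent saddle-nodes at $(x^*)^2=1$, purely imaginary roots forcing $\zeta^2=((x^*)^2-1)(2J+1-(x^*)^2)$ and hence the window $1<(x^*)^2<1+2J$ (equivalently $|y|<\tfrac23(J-1)\sqrt{1+2J}$) together with the delays $\tau_f^k$, and the subcriticality delegated to a normal-form computation of the first Lyapunov coefficient. Where you differ is in how the stability conclusions are closed: the paper argues directly from modulus/real-part inequalities on the characteristic equation to rule out roots of the wrong sign in each region, whereas you run a cleaner homotopy-in-$\tau$ argument (right-half-plane root count is finite, locally constant, anchored at the delay-free values $1$ for $x_0$ and $0$ for $x_\pm$) and you supply the transversality computation $\mathrm{Re}\,(d\xi/d\tau)^{-1}=(A^2+(1-(x^*)^2+J)^2)^{-1}>0$ plus simplicity of $\pm\mathbf{i}A$, neither of which appears in the paper's proof; these are genuine additions that make the ``loses stability exactly at $\tau_f^0$, with secondary crossings at $\tau_f^k$'' claim airtight. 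On the one point you leave as a plan --- strict positivity of the first Lyapunov coefficient --- you are at the same level of rigor as the paper, which also does not prove it analytically but computes the coefficient symbolically (via the method of Appendix~\ref{append:NormalForms}) and verifies positivity numerically over the parameter range (Figure~\ref{fig:Lyap}); so no gap relative to the source, though neither argument is a complete proof of subcriticality.
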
 

The Hopf and saddle-node bifurcation curves are depicted for $J=2$ in Figure~\ref{fig:BifurcationsFast}. 

\begin{figure}
	\centering
		\subfigure[Bifurcations fast system in $(y,\tau)$]{\includegraphics[width=.4\textwidth]{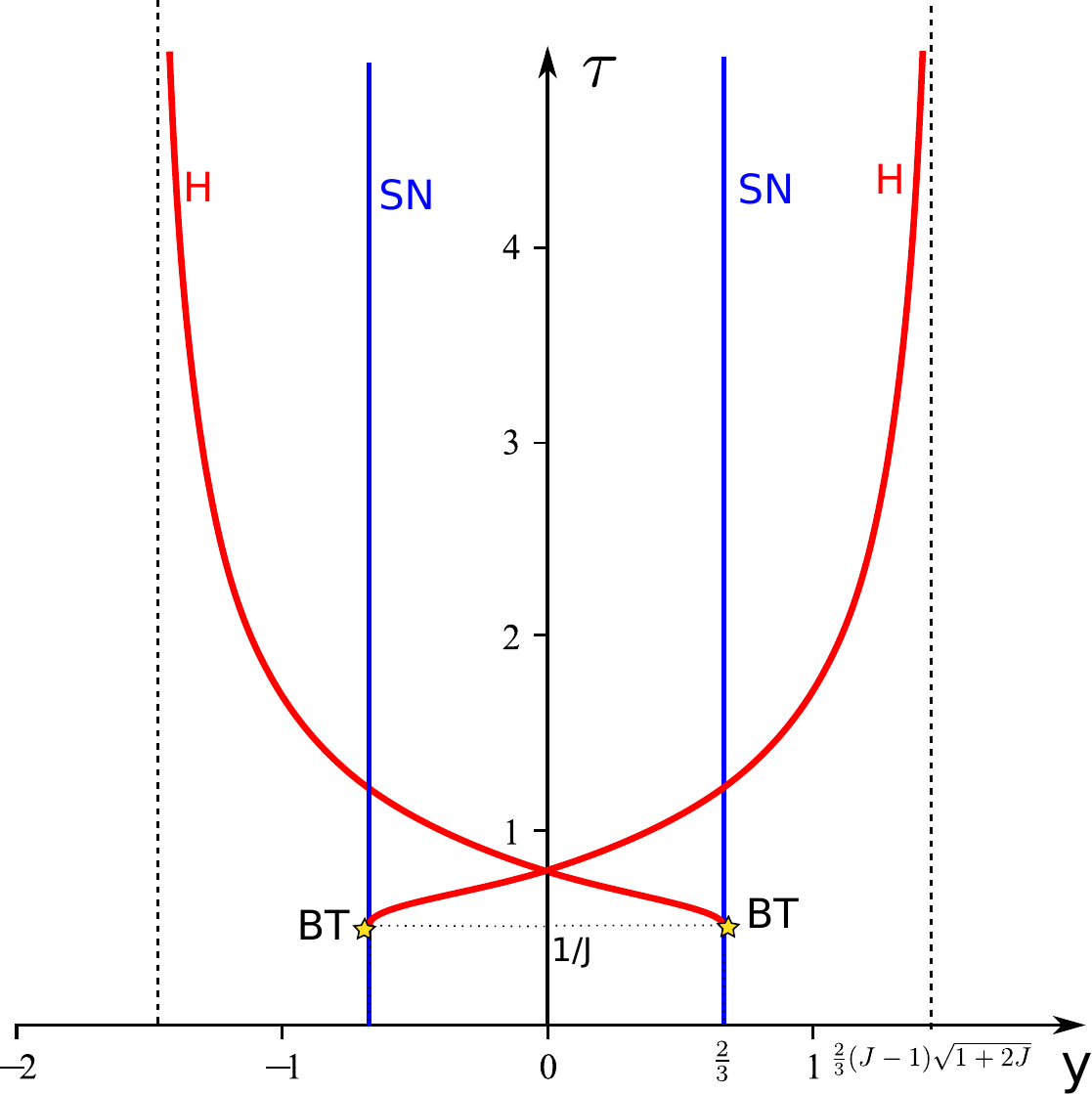}}\qquad 
		\subfigure[Bifurcations fast system in $(a,\tau)$ with $a=x_+(y)$]{\includegraphics[width=.4\textwidth]{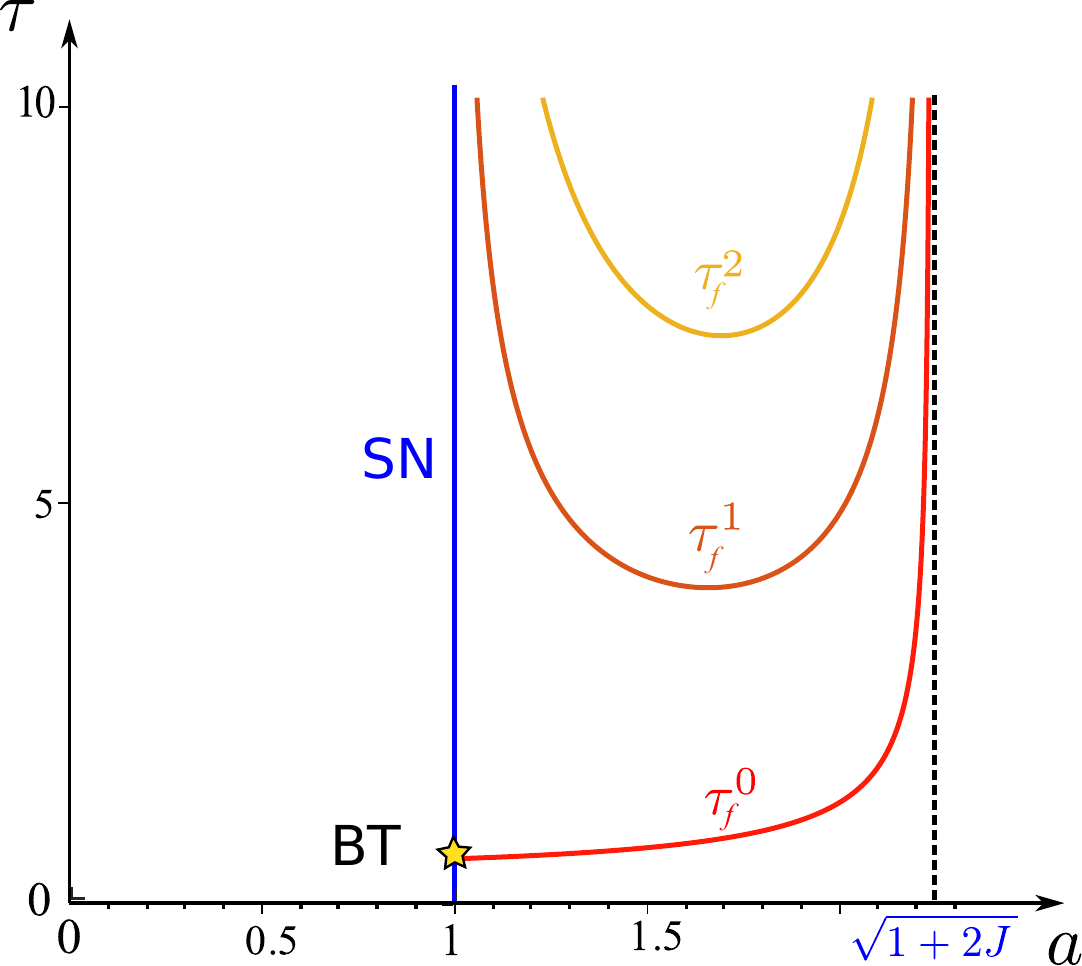}}
	\caption{Hopf bifurcation for the fast system~\eqref{eq:FastDelayed} for $J=2$. (a) The branches of solutions $x_\pm (y)$ undergo saddle-node bifurcations at $y=\pm 2/3$ (blue lines) and Hopf bifurcations along the red curves $\tau=\tau_f^0$ with vertical asymptotes at $y=\pm 2/3$ and $y= \pm \frac{2}{3}\sqrt{1+2J}(J-1)$ (black dashed line). (b) Primary and secondary Hopf bifurcations in the plane $(a,\tau)$ with $a=x_+(y)$. The visible curves in the range plotted are $\tau_f^k$ for $k\in\{0,1,2\}$. The BT star identify the generic subcritical Bogdanov-Takens bifurcations characterized in theorem~\ref{thm:BT}.}
	\label{fig:BifurcationsFast}
\end{figure}

\begin{proof}
We start by characterizing the stability of the fixed points away from the bifurcation points announced in the proposition. To this end, we investigate the sign of the real part of the characteristic roots of the system at a given fixed point $x^*$. These are the solutions $\xi$ of the equations:
	\begin{equation}\label{eq:CharactRootsFast}
		\xi=1-(x^*)^2 +J -Je^{-\xi\tau}
	\end{equation}
	This equation may be solved using special functions\footnote{Indeed, the solutions to the characteristic equation are given by the Lambert functions $W_k$ (the different branches of the inverse of $x\mapsto xe^x$, see e.g.~\cite{corless:96}):
	\[\xi = A+\frac 1 \tau W_k\left(-\tau J e^{-\tau A}\right)\]
	with $A=1-(x^*)^2+J$. The stability of $x^*$ is hence governed by the sign of the real part of the rightmost eigenvalue, given by the real branch $W_0$ of the Lambert function, and if the argument of the Lambert function has a real part greater than $-e^{-1}$ the root is unique. If not, we have two eigenvalues with the same real part corresponding to $k=0$ or $-1$.}, but elementary calculus can also be used to characterize the bifurcation curves in the parameter space.
	
Indeed, from the complex characteristic roots equation~\eqref{eq:CharactRootsFast}, we can show that $x_0(y)$ is unstable whatever the parameters $(J,\tau)$ of the system, and $x_{\pm}(y)$ are stable when $\vert y\vert>\frac 2 3 (J-1)\sqrt{1+2J}$. Indeed, if $\xi=\alpha+\mathbf{i}\beta$, we can deduce from~\eqref{eq:CharactRootsFast} that:
	\[Je^{-\alpha\tau}=\sqrt{\left(-\alpha+(1-(x^*)^2+J)\right)^2+\beta^2},\]
	which is not possible with $\alpha<0$ and $\vert x^*\vert <1$, since in that case we would have the contradiction:
	\[J\geq Je^{-\alpha\tau}\geq -\alpha +(1-(x^*)^2+J) \geq 1-(x^*)^2+J,\quad \text{i.e.}\qquad (x^*)^2\geq 1.\]
	Similarly, we can show that no unstable solution exist with $\vert y\vert>\frac 2 3 (J-1)\sqrt{1+2J}$. Indeed, in that case we have $\vert x^*\vert >\sqrt{1+2J}$. Assuming $\alpha>0$, equation~\eqref{eq:CharactRootsFast} implies that necessarily:
	\[-J\leq Je^{-\alpha\tau}\cos(\beta\tau)\leq 1-(x^*)^2+J,\qquad \text{i.e.} \qquad (x^*)^2 \leq {1+2J}\]

In the parameter region not covered by the above characterization, the system may display changes of stability. We now use the characteristic equation~\eqref{eq:CharactRootsFast} to determine parameters corresponding to such changes of stability. We therefore only need to access to the set of parameters for which the characteristic roots of one of the equilibria cross the imaginary axis. There are two such situations: the characteristic root crossing the imaginary axis can either be (i) a real characteristic root (the saddle-node case) or (ii) a pair of purely imaginary characteristic roots (Hopf case).

	Saddle-node bifurcations (corresponding to $\xi=0$) occur if and only if:
	\[x^*=\pm 1\]
	which corresponds, when using the characterization of $x^*$ as a fixed point of the system, to the point in the parameter space:
	\[y=\pm \frac 2 3 \]
	and this bifurcation is independent of the delay $\tau$ and the coupling strength $J$. 

	Hopf bifurcations occur when $\xi=\mathbf{i}\zeta$ for $\zeta>0$, thus necessarily when
	\begin{equation}\label{eq:zetafast} 
		J^2 = (1-(x^*)^2 +J)^2 + \zeta^2 \qquad \textrm{i.e.} \qquad \zeta = \sqrt{(x^*)^2-1}\sqrt{2J+1-(x^*)^2}.
	\end{equation}
	The first observation is that Hopf bifurcations may only arise for $\vert x^*\vert \in (1,\sqrt{1+2J})$, i.e. the only fixed points that may undergo a Hopf bifurcation are $x_{\pm}(y)$. These fixed points lose stability when $\tau$ exceeds $\tau_f^k(x^*)$ given by:
	\begin{equation}
		\label{eq:taufast} \tau_f^k(x^*) = \frac{1}{\zeta}\left(\cos^{-1}\left(\frac{1-(x^*)^2+J}{J}\right)+2k\pi\right).
	\end{equation}
	In order to show that indeed, the system undergoes a generic subcritical Hopf bifurcation at this point, one needs to reduce the system to normal form at this point and compute the first Lyapunov coefficient. Similarly to what we did in order to characterize the criticality of the Hopf bifurcation of the full system in section~\ref{sec:FhNFull} and appendix~\ref{append:NormalForms}, we compute a complex but exact formula for the first Lyapunov coefficient of the fast system aided by a formal computation software (Maple\textregistered \, and Mathematica\textregistered). Numerical evaluation of this coefficient shows that it is always strictly positive (see Figure~\ref{fig:Lyap}), ensuring that the system undergoes a subcritical Hopf bifurcation. 
\end{proof}

\begin{figure}
	\centering
		\includegraphics[width=0.5\textwidth]{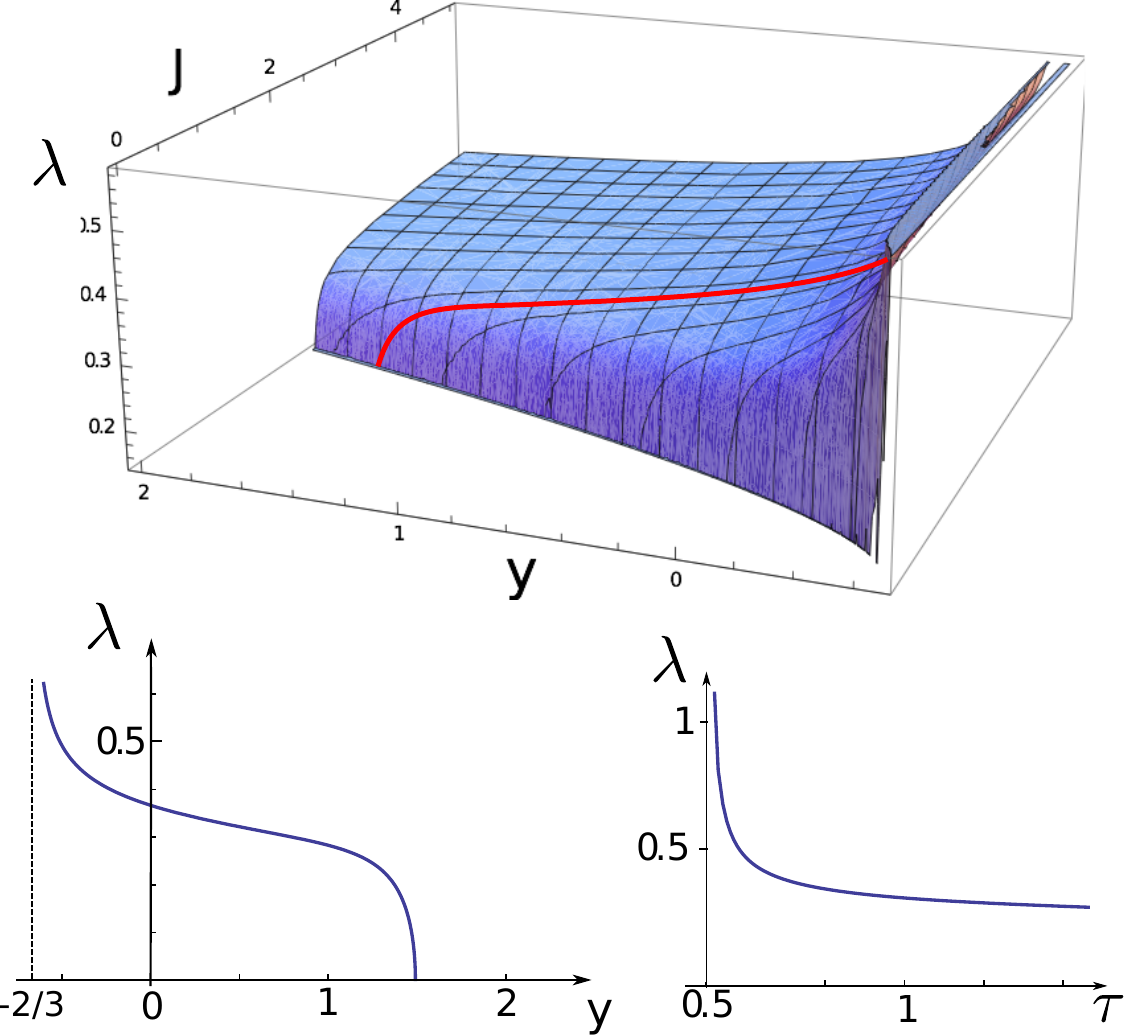}
	\caption{First Lyapunov coefficient of the normal form of the Hopf bifurcation of the fast system. For the whole range of parameters $(J,y)$ considered, the Lyapunov coefficient is strictly positive identifying a generic subcritical Hopf bifurcation. The Lyapunov coefficient diverges when $y\to \pm 2/3$ and $\tau\to 1/J$ and goes to zero at the other end of the Hopf bifurcation curve ($y\to \pm \frac{2}{3}\sqrt{1+2J}(J-1)$ and $\tau\to\infty$). (Bottom) Lyapunov coefficient along the Hopf bifurcation curve for $J=2$ (red curve on the top figure), as a function of $y$ (left) or $\tau$(right).}
	\label{fig:Lyap}
\end{figure}
We now characterize in more detail the shape of the Hopf bifurcation curves. When $y\to \pm \frac{2}{3}\sqrt{1+2J}(J-1)$, the fixed point $x_+(y)$ approaches $\sqrt{1+2J}$ and all branches of Hopf bifurcations escape to infinity, since the Hopf bifurcation curves $\tau_f^k(x^*)$ behave, for $x^*$ approaching $\sqrt{2J+1}$ from the left, as:
\[\tau_f^k(x^*)\sim \frac{(2k+1)\pi}{\sqrt{2J}\sqrt{(2J+1)-(x^*)^2}}.\]
For $x^* \to 1^+$, the behavior of $\tau_f^0$ differs from that of $\tau_f^k$ for $k\in \N\setminus \{0\}$. For $k\neq 0$, the Hopf bifurcation curves $\tau_f^k(x)$ diverge to infinity when $x\to 1^+$. In contrast, the curve of Hopf bifurcations $\tau_f^0$ approaches the curve of saddle node bifurcations tangentially at this point, since using the series expansion of $\cos^{-1}(1+z)$, we have:
\[\tau_f^0(x^*)=\frac 1 J + \frac{1}{12 J^2} (1-(x^*)^2) + \mathcal O ((1-(x^*)^2)^2).\]
This is also the case of the graph of the Hopf bifurcation curve parameterized by $y$: for instance looking at the bifurcation around $x_-(y)$ at $y\to \frac 2 3 ^-$ shows that the curve of Hopf bifurcation meets the line of saddle node bifurcations $y=2/3$ tangentially since it satisfies at this point the expansion:
\[\tau = \frac 1 J - \frac 1 {6J^2}\sqrt{\frac 2 3 -y} +\mathcal O (\sqrt{2/3 -y}).\]
This is consistent with the hypothesis that the system undergoes a generic subcritical Bogdanov-Takens (BT) bifurcation, which we demonstrate in the next section. This BT bifurcation point plays a central role in the dynamics of the full system, organizing the shape and criticality of the Hopf bifurcation of the system as discussed in section~\ref{sec:FhNFull}.

\subsection{Bogdanov-Takens singularity}\label{sec:BT}
The curve of Hopf bifurcations ends at a codimension two point corresponding to $y=2/3$ and $\tau=1/J$. At this point, the saddle-node bifurcation line $y=2/3$ and the Hopf bifurcation line collide. Characterizing this bifurcation will hence allow understanding the transition for $y$ close to one, and we shall now reduce the system to normal form around this point. 
\begin{theorem}\label{thm:BT}
	System \eqref{eq:FastDelayed} undergoes generic subcritical Bogdanov-Takens bifurcation at $y=\pm 2/3$ and $\tau=1/J$.
\end{theorem}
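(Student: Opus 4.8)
The plan is to perform a two-dimensional centre-manifold reduction of the delayed fast equation \eqref{eq:FastDelayed} at the candidate point and to identify the reduced planar vector field with a versal Bogdanov--Takens unfolding, reading off the two non-degeneracy signs. By the symmetry $(x,y)\mapsto(-x,-y)$ of \eqref{eq:FastDelayed} it suffices to treat $y_0=\tfrac23$, $\tau_0=1/J$, where the coalescing fixed point is $x^*=-1$; the statement at $y_0=-\tfrac23$ then follows verbatim. Writing $x_t=-1+u_t$ and using $x-\tfrac{x^3}{3}+\tfrac23=u^2-\tfrac{u^3}{3}$, equation \eqref{eq:FastDelayed} becomes
\[
u_t'=J\,(u_t-u_{t-\tau})+\bigl(y-\tfrac23\bigr)+u_t^2-\tfrac13 u_t^3 ,
\]
so that the delay enters only the linear part $L\phi=J\phi(0)-J\phi(-\tau)$, with characteristic function $\Delta(\xi)=\xi-J+Je^{-\xi\tau}$, and the nonlinearity $u_t^2-\tfrac13u_t^3$ is evaluated at the present time only. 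The first step is the spectral picture at $\tau=\tau_0=1/J$: one computes $\Delta(0)=0$, $\Delta'(0)=1-J\tau_0=0$ and $\Delta''(0)=J\tau_0^2=1/J\neq 0$, so $\xi=0$ has algebraic multiplicity exactly two and geometric multiplicity one (a genuine Jordan block); substituting $\xi=\mathbf{i}\omega$ and separating real and imaginary parts shows that $\xi=0$ is the only root on the imaginary axis, and the rest of the spectrum lies strictly to its left, by continuity from the region where the equilibrium on the $x_-$ branch is stable (Proposition~\ref{pro:bifurcFast}). Standard DDE theory, in the Fenichel-type framework of \cite{hupkes-standstede:10}, then provides a two-dimensional centre manifold.

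Next I would run the delay normal-form algorithm of Faria and Magalh\~aes \cite{faria-magalhaes:95,faria1995normal} --- the machinery already used for the Hopf computations in Appendix~\ref{append:NormalForms}. Take the basis $\Phi=(\phi_1,\phi_2)$ of the generalised $0$-eigenspace with $\dot\Phi=\Phi B$, $B$ the $2\times2$ nilpotent Jordan block, i.e. $\phi_1(\theta)\equiv 1$ and $\phi_2(\theta)=\theta$ (one checks $\dot\phi_2=\phi_1$ and $\dot\phi_2(0)=L\phi_2=J\tau_0=1$), and the dual basis $\Psi=(\psi_1,\psi_2)$ normalised by $(\Psi,\Phi)=I$ for the bilinear form $(\psi,\phi)=\psi(0)\phi(0)-J\int_{-\tau_0}^{0}\psi(\zeta+\tau_0)\phi(\zeta)\,d\zeta$ associated with $L$; a short computation gives $\psi_1(s)=\tfrac23-2Js$ and $\psi_2(s)\equiv 2J$, so $\psi_1(0)=\tfrac23$, $\psi_2(0)=2J$. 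Since the nonlinearity depends only on $u_t(0)=\eta_1+O(|\eta|^2)$, its projection onto the centre manifold is simply $\Psi(0)\,\eta_1^2$ at quadratic order (the centre-manifold correction contributes only at cubic order), so the reduced system is $\dot\eta_1=\eta_2+\tfrac23\eta_1^2+\cdots$, $\dot\eta_2=2J\eta_1^2+\cdots$. A single near-identity transformation brings this to the Bogdanov--Takens normal form $\dot\xi_1=\xi_2$, $\dot\xi_2=a\,\xi_1^2+b\,\xi_1\xi_2+O(|\xi|^3)$; one finds $a=2J$ and $b=\tfrac43$, hence $a\neq 0$, $b\neq 0$ and $ab>0$. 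Finally, restoring the parameters: $y-\tfrac23$ enters as a pure additive constant (the fold/saddle-node direction) while $\tau-\tau_0$ deforms the linear part through $\Delta'(0)=-J(\tau-\tau_0)$ (the trace/Hopf direction), and one checks that the induced map from $(y,\tau)$ to the two essential unfolding parameters of the normal form is a local diffeomorphism, so the family is versal. The sign $ab>0$ selects the topological type whose emanating Hopf branch is subcritical, in agreement with Proposition~\ref{pro:bifurcFast}; therefore the bifurcation is a generic subcritical Bogdanov--Takens point.

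The care needed is concentrated in the centre-manifold bookkeeping in this infinite-dimensional setting with a non-semisimple double zero: one must construct the generalised eigenfunction and the dual basis consistently with the Jordan structure, handle the Jordan block in the homological equation, and --- for completeness, although not for the codimension-two statement --- track the $\eta_1^2$-order centre-manifold correction; a computer-algebra check is convenient here, exactly as in Appendix~\ref{append:NormalForms}. A second, example-specific subtlety is that the point lies on a fold of the critical manifold, so the natural parameter $y$ both moves the equilibrium (with $dx^*/dy\to\infty$ as $x^*\to-1$, the source of the $\sqrt{\tfrac23-y}$ behaviour of $\tau_f^0$ recorded before the statement) and provides the additive unfolding direction; one must choose the unfolding coordinates so that versality is transparent, but no genuine obstruction arises because the fold is already built into the Bogdanov--Takens normal form. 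The remaining ingredients --- the spectral check, the symmetry argument, and the final assembly --- are routine.
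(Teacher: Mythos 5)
Your proposal is correct and follows essentially the same route as the paper: identify the non-semisimple double zero via the affine functions $1,\theta$ spanning the generalised eigenspace, run the Faria--Magalh\~aes normal-form reduction, and read off the quadratic coefficients of the planar Bogdanov--Takens form with $ab>0$, which is the subcritical case. The only differences are cosmetic --- you work at $y=2/3$, $x^*=-1$ without the paper's time rescaling $u(t)=x(\tau t)$ (so your coefficients $a=2J$, $b=\tfrac43$ differ from the paper's $-2\tau$, $-\tfrac43\tau$ by sign and scaling, but agree in the invariant sign of $ab$), and you are somewhat more explicit about the dual basis, the spectral check, and versality of the $(y,\tau)$-unfolding than the paper is.
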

\begin{proof}
	Let us start by noting that at this point the linearized operator corresponds to a the Bogdanov-Takens singularity. Indeed, the linearized system at this point is given by the simple equations:
	\begin{equation}\label{eq:FastDelayedLin}
		u_t'= J(u_t-u_{t-\tau}).
	\end{equation}
	It is very easy to see that the two dimensional space of affine functions $t\mapsto \alpha+t\beta$ is contained in the solution space
	of \eqref{eq:FastDelayedLin} with the action of the RHS of \eqref{eq:FastDelayedLin} given by $\alpha+\beta t \to \beta$.
	In the basis given by the functions $1$ and $t$ this gives the matrix
	\[
	\left (\begin{array}{cc} 0&1\\0&0\end{array}\right ).
	\] 
	Using the dispersion relationship~\eqref{eq:CharactRootsFast} we can draw some conclusions on the bifurcation
	diagram in the plane $(\tau, y)$. First of all the saddle node bifurcation 
	occurs for $y=-2/3$ (or $y=2/3$, case treated in an identical manner).
	Near such a bifurcation point, the fixed point relationship~\eqref{eq:FPFast} ensures that $y=-2/3+((x^*-1)^2)+\mathcal O ((x^*-1)^3)$, and using this expansion and the dispersion relationship \eqref{eq:CharactRootsFast}, one can easily show that the curve of Hopf bifurcations is tangent to the $y=-2/3$
	line, which is consistent with the generic Bogdanov-Takens picture.

	In order to ensure that {the local dynamics of the system at this point is indeed given by the universal unfolding of the Bogdanov-Takens bifurcation}, we reduce the system to normal form at this point. We start by changing variables defining $z= y+2/3$ and $u(t)=x(\tau t)$, so that equation~\eqref{eq:FastDelayed} writes
	\begin{align*}
		\dot{u}&	=\tau J (u_t-u_{t-1}) - \tau u_t^2 + \tau z\\
		&= \delta_u +\nu \delta_u - \tau u^2 + \tau z
	\end{align*}
	with $\delta_u=(u_t-u_{t-1})$ and $\nu=(J\tau-1)$, which is a convenient notation since the singular point considered corresponds to $J\tau=1$. Normal form reduction for such delayed equations was analyzed in~\cite{faria-magalhaes:95}. Applying directly the methodology {presented} in that article, we obtain:
	\[\begin{cases}
		\dot{x_1}= \nu x_1 + x_2 -\frac 2 3 \tau x_1^2 +\tau y\\
		\dot{x_2}=2 \nu x_2 -2\tau x_1^2 + 2 \tau y.
	\end{cases}
	\]
	From these equations, we pursue the reduction to bring the system into canonical form of the Bogdanov-Takens bifurcation as for instance given by~\cite{kuznetsov,guckenheimer-holmes:83}, and obtain, with our notations,
	\[\begin{cases}
		\dot{x_3}=z\\
		\dot{z}= 2\tau y + 2\nu z -2\tau x_3^2 -\frac 4 3 \tau x_3 z.
	\end{cases}\]
	This is the canonical normal form of the subcritical Bogdanov-Takens bifurcation.
\end{proof}
This theorem ensures in particular that beyond the Hopf and saddle-node bifurcations already identified, the system presents also a global bifurcation: a curve of saddle-homoclinic bifurcations which, near the point $J\tau=1$, has the expansion 
\begin{equation}\label{eq:SadHom}
	\{y=2/3-98/25 (J\tau-1)^2/\tau, J\tau>1\}. 
\end{equation}

\section{{Numerical computation of global bifurcations of the fast system}}\label{sec-fastmore}

Analytical methods have allowed characterizing all local codimension {one} bifurcation curves (saddle-node and Hopf bifurcations) and a codimension {two} BT bifurcation. From these bifurcations emerge families of periodic orbits and homoclinic cycles that we characterize numerically in the present section. Universal unfolding of the BT bifurcation {discussed} in section~\ref{sec:BT} provides us with a local equivalent of the curve close to $y=2/3$ and $J\tau=1$, given by the quadratic curve~\eqref{eq:SadHom}. In order to understand the global fast dynamics away from the BT bifurcation, we need to continue curves of homoclinic loops, which is very complex mathematically and {a delicate numerical task}. Here, aided by numerical simulations performed using XPPAut, we obtained a relatively simple bifurcation picture (represented schematically in Figure~\ref{fig:FastBifs}), that may be explained through simple, two-dimensional representations of a virtual phase plane given by $(x_{t},x_{t-\tau})$ (Figure~\ref{fig:Phases}). These phase portraits suggest that the dynamics of~\eqref{eq:FhNdelay} is essentially two-dimensional, which would suggest\footnote{The proof of existence of such a manifold is beyond the scope of this paper.} the existence of an inertial manifold possibly described by the two variables $x(t)$ and $x(t-\tau)$.
\begin{figure}[!h]
	\centering
		\includegraphics[width=.4\textwidth]{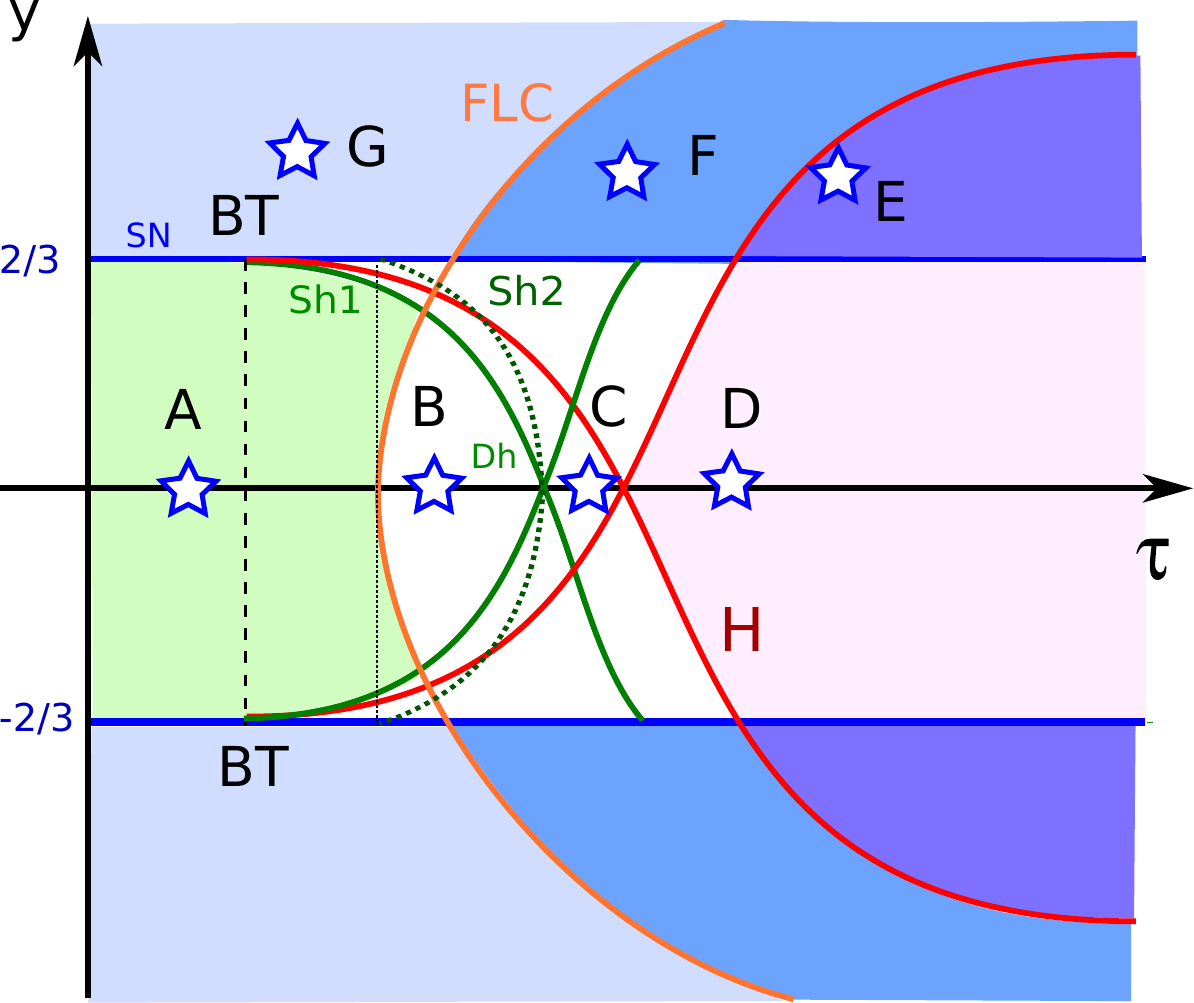}
	\caption{Bifurcation diagram of the fast system in the parameter space $(J,y)$. The diagram is {symmetric} with respect to the reflection $y\mapsto -y$. The unfolding of the Bogdanov-Takens points (BT) at $\tau=1/J$ yields a line of saddle-node bifurcations (SN, blue) at $y=2/3$, a curve of Hopf bifurcations (H, red), together with a saddle-homoclinic bifurcation (Sh1, green) enclosing one fixed point. The fold of limit cycles (FLC) is indicated in orange, and the second line of saddle homoclinics (Sh2) enclosing two fixed points is depicted in dashed green. Saddle homoclinic lines intersect along the line $y=0$, giving rise to a double homoclinic loop (Dh). Seven specific locations are indicated with blue stars, with extended phase portraits provided in Figure~\ref{fig:Phases}.}
	\label{fig:FastBifs}
\end{figure}

{The numerically evidenced bifurcations of cycles of the system, obtained through extensive simulations, are the following:
\begin{itemize}
	\item The saddle-homoclinic bifurcation curves emerge from the BT points and {end} on the opposite manifold of saddle-node bifurcations (solid green curves Sh1). These homoclinic loops enclose only one fixed point. Because of the symmetry of the system $(x,y)\leftrightarrow (-x,-y)$, there exists a double-homoclinic loop at $y=0$ for a given value of $\tau>1/J$ (Dh point).
	\item We numerically {establish} the presence of additional periodic orbits of relatively large amplitude, one stable and enclosing the other one which is unstable. As parameters are varied, we find that the two cycles are connected through a codimension two fold of limit cycles (orange curve in Figure~\ref{fig:FastBifs}).
	\item The smaller and unstable limit cycle described above {terminates}, in a certain range of parameters, on the manifold of saddle fixed point in a saddle-homoclinic loop (dotted green line Sh2 in Figure~\ref{fig:FastBifs}). Contrasting with the curve Sh1 of homoclinic loops, the Sh2 curve correspoonds to homoclinic loops enclosing the two fixed points. Moreover, the curve also intersects the line $y=0$ at the point Dh where the system displays the double-homoclinic loop already mentioned above.
	\item for {the} values of the parameters {for} which the fold of limit cycles {exists} but the Sh2 curve no {longer} exists (for values of $\tau$ larger than the value corresponding to the double homoclinic loop Dh), it actually continues the family of unstable limit cycles emerging from the Hopf bifurcation. 
\end{itemize} }

{Although this description may appear relatively complex, it seems to be constrained to a planar dynamical system and therefore has a fairly simple geometrical interpretation in the virtual extended phase plane $(x_t,x_{t-\tau})$ mentioned above. We now describe, in this space, the {changes} of the dynamics as a function of $\tau$ and $y$.}

\ifthenelse{\boolean{DisplayBigFigs}}{
\begin{figure}[!h]
	\centering                   
		\includegraphics[width=.9\textwidth]{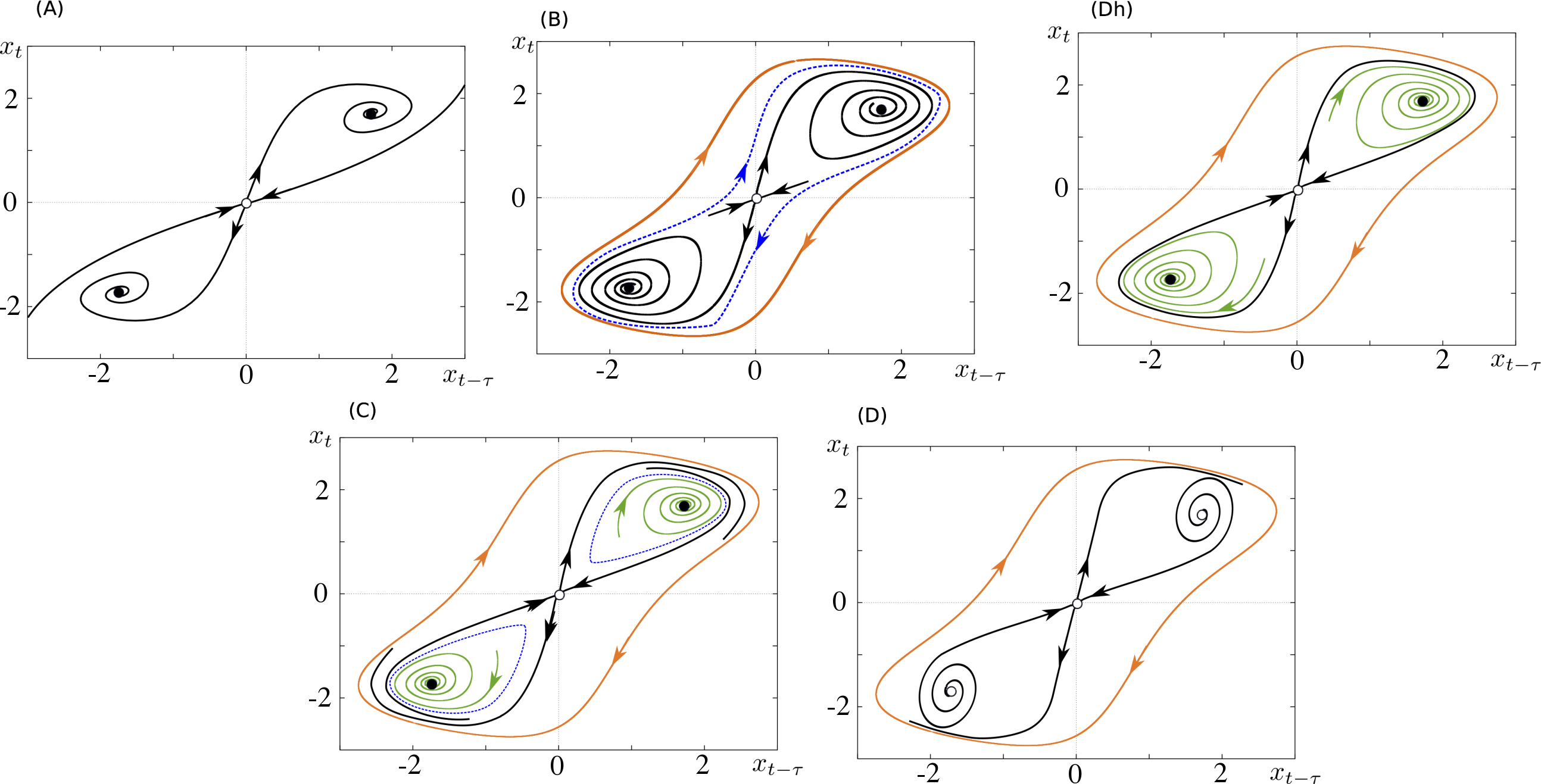}
		\includegraphics[width=.9\textwidth]{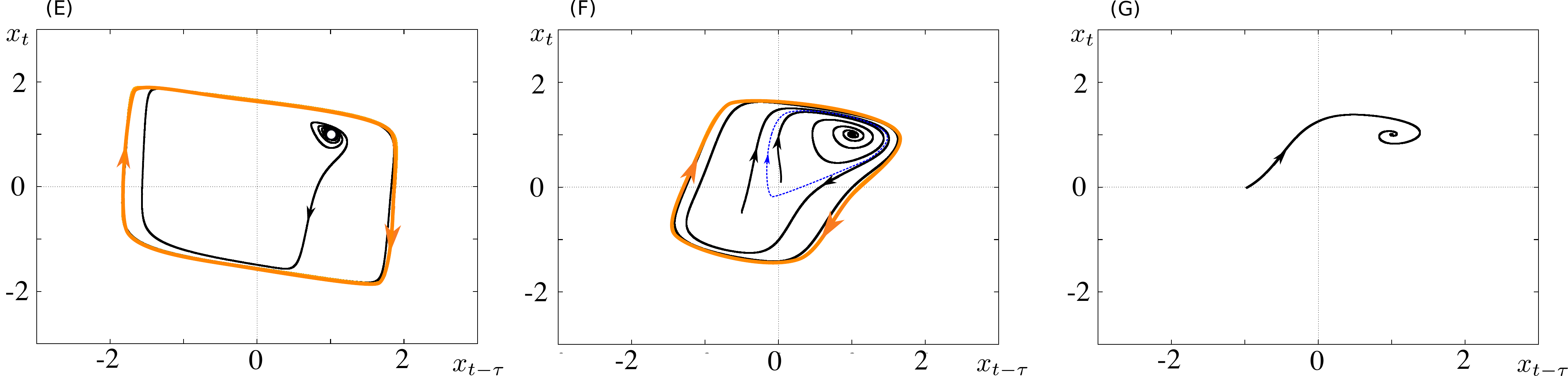}
	\caption{Phase space representation of the fast system in the axes $(x_{t-\tau},x_t)$ for $y=0$ and different values of $\tau$, corresponding to the stars in Figure~\ref{fig:FastBifs} (see text). The trajectories were obtained with XPP Aut. As before, $J=2$. (A-D): $y=0$ and distinct values of $\tau$. (A) $\tau=0.4$, (B) $\tau=0.6$, (Dh) $\tau=0.664$, (C) $\tau=0.7$, (D) $\tau=1$. (E-G) $y=0.8$, (E): $\tau=1.5$, (F) $\tau=0.8$, (G): $\tau=0.5$. All curves are actual solutions of the delayed fast system, except the dashed lines corresponding to the unstable trajectories, namely the unstable limit cycles and the stable manifold of the saddle fixed point. See text for a description of the phase portraits.}
	\label{fig:Phases}
\end{figure}}

{Let us first discuss the symmetric case corresponding to $y=0$, in which case the system has the reflection symmetry $x\leftrightarrow -x$ (see Figure~\ref{fig:Phases}). For small delay, the system has a phase plane similar to the case $\tau=0$: it features two stable foci, a saddle equilibrium and no cycle (Figure~\ref{fig:Phases}(A)). The first bifurcation occurring as delays are increased is the fold of limit cycles (orange curve in Figure~\ref{fig:FastBifs}). At this bifurcation point appear a two cycles, one attractive and one repulsive (Figure~\ref{fig:Phases} (B), orange and blue loops). In this regime, the stable manifold of the saddle fixed point winds around the unstable cycle, while the unstable manifolds converge towards the stable fixed points. As delays are increased, the unstable cycle progressively shrinks and approaches the stable and unstable manifolds of the saddle fixed points. It eventually {becomes} identical to the stable and unstable manifolds precisely when $\tau$ reaches the value corresponding to the double-homoclinic bifucation (Dh point in Figure~\ref{fig:FastBifs}). This loop is also the trace of the symmetric continuation of the saddle-homoclinic loops emerging from the two BT bifurcations, and {at} this point, stable and unstable manifolds of the saddle fixed point coincide (Figure~\ref{fig:Phases} (Dh)). For slightly larger values of the delay, the system shows two unstable periodic orbits around the stable fixed points, towards which the stable manifold of the saddle fixed point {converges}, {while} the unstable manifold converges towards the large stable cycle (Figure~\ref{fig:Phases} (C)). The unstable orbits cycling around the stable fixed points shrink, until $\tau$ reaches the value of the Hopf bifurcation (arising simultaneously {at two fixed points} due to symmetry). At {the Hopf point} the stable fixed {points} become unstable {foci} and the unstable cycles disappear. The unstable manifold of the saddle fixed point keeps converging towards the stable relaxation cycle, and the stable manifold winds around the unstable {foci} (Figure~\ref{fig:Phases} (D)). }
\ifthenelse{\boolean{DisplayBigFigs}}{
\begin{figure}[!h]
	\centering
		\includegraphics[width=.7\textwidth]{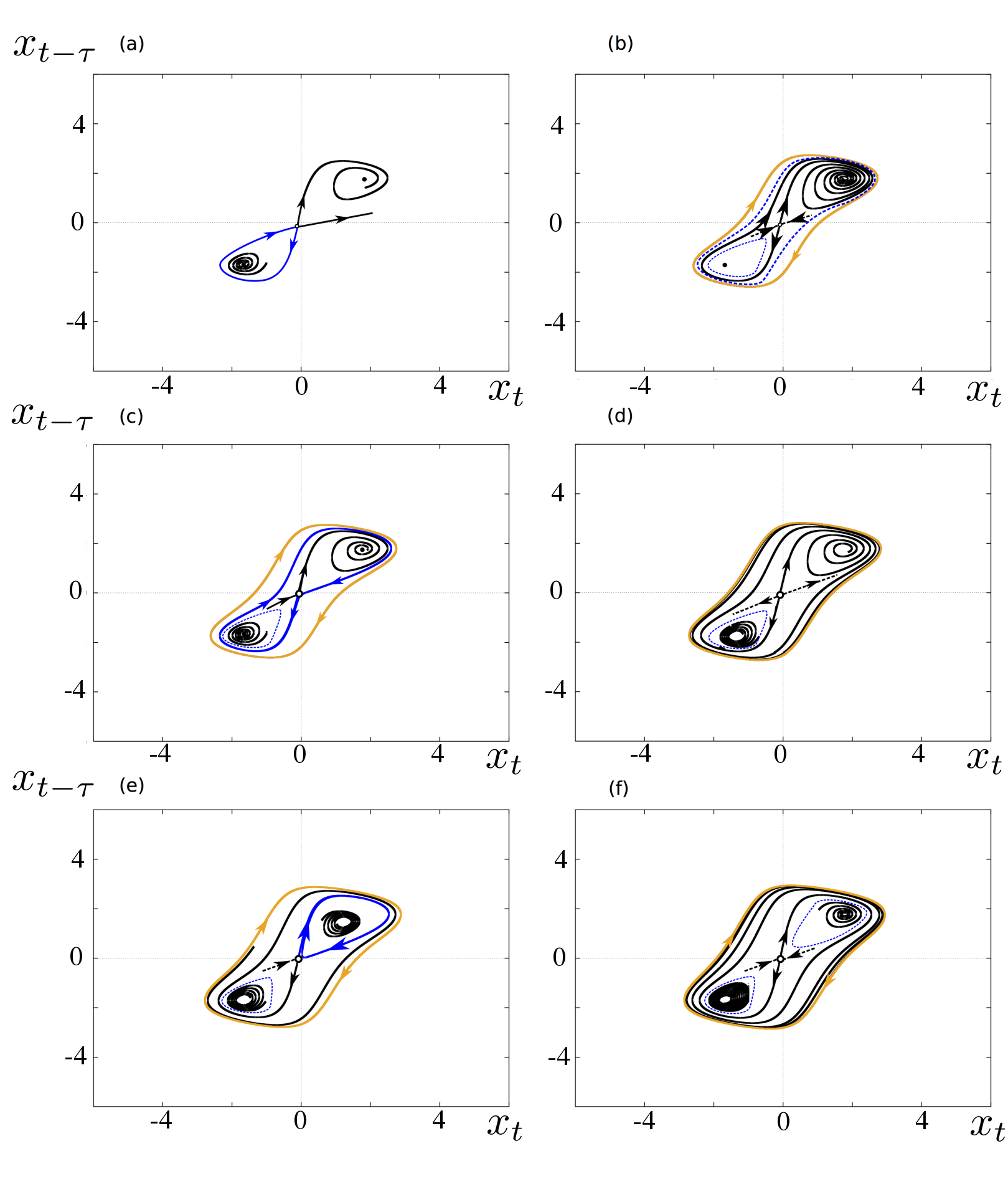}
	\caption{Phase space representation of the fast system in the axes $(x_{t-\tau},x_t)$ for $y=0.1$ and different values of $\tau$. The trajectories were obtained with XPP Aut. 
	As before, $J=2$. (a) $\tau=0.6449$, (b) $\tau=0.652$, (c) $\tau=0.656$, (d) $\tau=0.67$, (e) $\tau=0.6832$, (f) $\tau=0.7$. See text for a description.}
	\label{fig:AppendixPhase}
\end{figure}
}

Asymmetric situations arise when $y\neq 0$. When $\vert y \vert>2/3$, the system has a unique fixed point, which cannot be unstable if $\vert y\vert >\sqrt{1+2J}$. The system can feature an additional pair of limit cycles (one stable and one unstable). A typical case for $2/3 < y < \sqrt{1+2J}$ is depicted in Figure~\ref{fig:Phases}(E-G). Let us {finally} discuss the {what happens if} $0<y<2/3$. In {this} case homoclinic bifurcations do not arise simultaneously at a double homoclinic loop, but sequentially. An example of phase portraits is provided in Figure~\ref{fig:AppendixPhase} for $y=0.1$. In {this} case, the first homoclinic bifurcation (Sh1) already arises before the fold of limit cycles. A similar situation as in Figure~\ref{fig:Phases}(A) occurs for small delays, and a homoclinic loop arises (Figure~\ref{fig:AppendixPhase}(a)), {giving rise to} an unstable periodic orbit enclosing the smallest fixed point (Figure~\ref{fig:AppendixPhase}(b)). After the fold of limit cycles, a pair of stable and unstable cycles emerges (Figure~\ref{fig:AppendixPhase}(b)). The unstable cycle progressively shrinks towards the saddle fixed point, until reaching the homoclinic bifurcation Sh2 (Figure~\ref{fig:AppendixPhase}(c)). At this point, as delays are further increased, the unstable loop disappears, and the system is left with an unstable orbit enclosing the smallest fixed point, a stable fixed point, a saddle and a stable limit cycle (Figure~\ref{fig:AppendixPhase}(d)). Further increasing the delays leads to crossing the saddle homoclinic line that emerges from the BT point at $y=-2/3$: a homoclinic loop encloses the {the fixed point with the largest value of $x$} (Figure~\ref{fig:AppendixPhase}(e)). For larger delays, the loop becomes an unstable periodic orbit enclosing the {the fixed point with the largest value of $x$} (Figure~\ref{fig:AppendixPhase}(f)), and we are in a situation similar to Figure~\ref{fig:Phases}(C). As delays are further increased, the {the fixed point with the least value of $x$} will undergo a Hopf bifurcation and lose stability as the loop enclosing it collapses (not shown). The phase portrait is similar to Figure~\ref{fig:AppendixPhase}(e), except that the loop around the {the fixed point with the least value of $x$} disappeared and that fixed point lost stability. The stable manifold of the saddle fixed point that converged towards this cycle now winds around this stable fixed point. 

\bigskip

Based on these descriptions of the fast dynamics, we are now in a position to account for the complex oscillatory patterns observed in our delayed slow-fast equation. 

\section{Complex dynamics of the delayed FitzHugh-Nagumo system}\label{sec:Complex}
\revision{Now that we characterized the dynamics of the fast system, we investigate the dynamics of the full system. The origin of small oscillations arising in the regime $J\tau<1$ is related to the presence of canard cycles and is investigated more in detail in~\cite{krupa-touboul:14a}. In that paper, as an illustration for our theory on canard explosions in delayed differential equations, we have shown that the FitzHugh-Nagumo system~\eqref{eq:FhNdelay} undergoes a {super}critical canard explosion along the branch of folds $y=\pm 2/3$ for $J\tau <1/2$, and we conjectured that it persists for $J\tau<1$. We refer to that paper for the explanations of the emergence of small oscillations for $a$ close to $1$, and their instability as delays are increased (Figure~\ref{fig:AllBehaviors}-\emph{Small Oscillations}).

Here, we focus on the regimes $J\tau>1$, in which the dynamics of the fast system is nontrivially governed by the delays. The combination of the slow (quasi-static) {evolution} of the variable $y$ as the fast system evolves according to the above descriptions will provide an explanation for the different behaviors observed. The descriptions provided below are all valid for $\eps$ sufficiently small.

For any $J$ and $\tau$, when $a\leq\sqrt{1+2J}$, the system converges towards the stable fixed point $(a,a-a^3/3)$. This is also the case for $a<\sqrt{1+2J}$ when $\tau<\tau_f^0(J,a)$ (these provide two boundaries, for any fixed $\tau$, delineating the region where solutions are fixed points, or `stationary' yellow region, in Figures~\ref{fig:MMOPhase} and~\ref{fig:BurstExplained}).

But if $a<1$ or if $1<a<\sqrt{1+2J}$ and $\tau>\tau_f^0(J,a)$, the full system does not have any stable equilibrium, and non-stationary behaviors therefore emerge. These can be of distinct types depending on the shape of the bifurcation diagram of the fast system as $y$ varies. We describe these in more detail below. 
}

\subsection{Relaxation and Mixed Mode Oscillations}\label{sec:MMOs}
For $\tau$ slightly larger than $1/J$, we have observed that the fast system shows a subcritical Hopf bifurcation, associated with a family of limit cycles undergoing a saddle-homoclinic bifurcation, and no stable cycle is present in the fast system (as long as we are below the value corresponding to the fold of limit cycles). We display in Figure~\ref{fig:MMOPhase} the typical bifurcation diagram of the fast system in that regime as a function of $y$. For $\eps>0$ small enough, $y$ varies slowly in time and the trajectories of the full system can be understood as slow {motions along}  the bifurcation diagram of the fast system.

In that regime, depending on $a$, the system either shows fixed-point behaviors (for $a$ within the yellow region of Figure~\ref{fig:MMOPhase}) or MMOs (for $a$ within the pink region of Figure~\ref{fig:MMOPhase}). 
\begin{figure}[h]
	\centering
		\includegraphics[width=.3\textwidth]{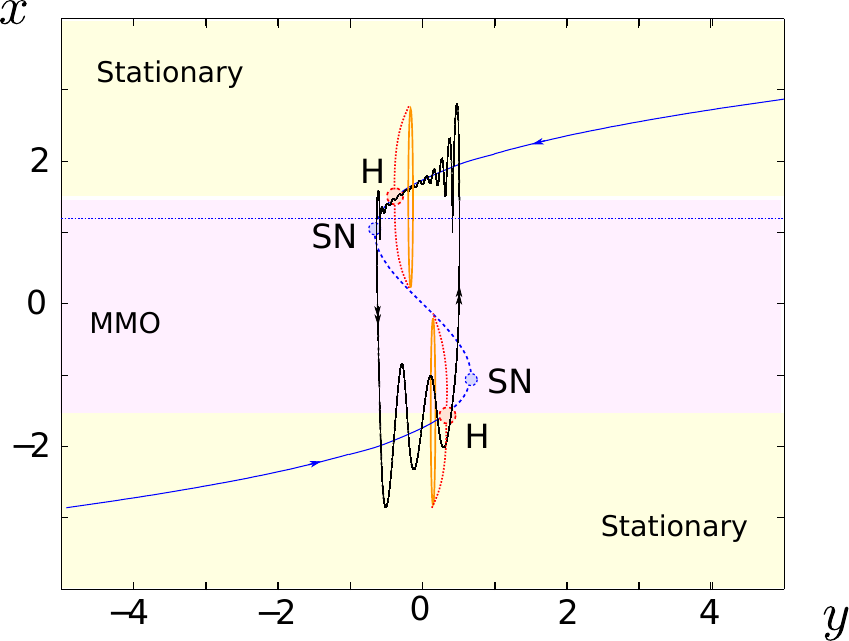}\qquad 
		\includegraphics[width=.3\textwidth]{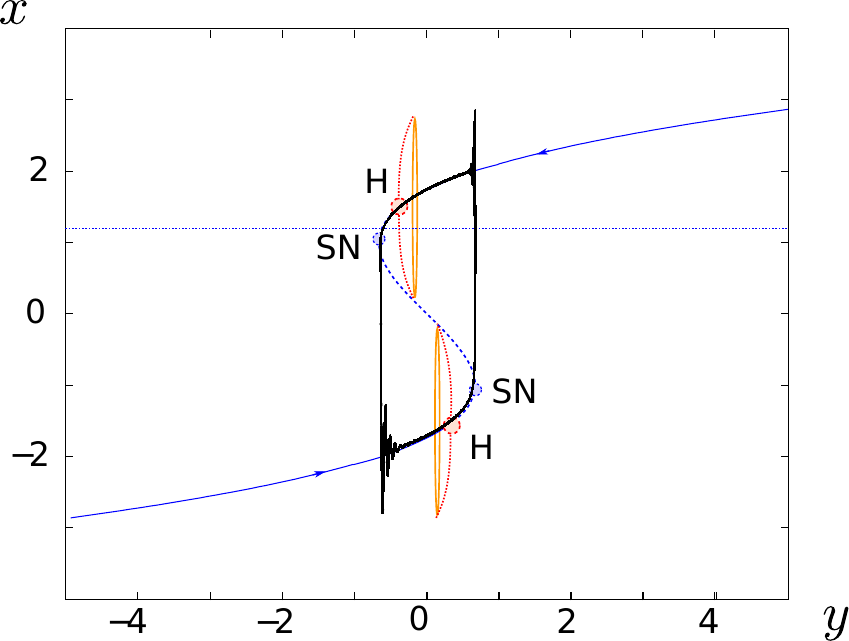}
	\caption{The MMO cycle: $J=2$ and $\tau=0.65$. The bifurcation diagram of the fast system is superimposed with a trajectory of the system in the plane $(x,y)$. Blue lines are fixed points (solid: stable, dashed: unstable). SN: saddle-node, H: Hopf, red dashed lines: unstable periodic orbits, and orange cycle is the saddle-homoclinic cycle. Dashed horizontal line corresponds to the value of $a$. (a) $\eps=0.05$, (b) $\eps=0.01$. Yellow and pink regions correspond to choices for $a$ leading respectively to stationary or periodic (relaxation oscillations or MMO) behaviors. }
	\label{fig:MMOPhase}
\end{figure}

Let us consider $a$ within the pink region. Since the fast system does not have any {periodic} orbit, for almost all initial {conditions} of the full system the fast variable converges rapidly towards {a} stable branch of the slow manifold (blue line in Figure~\ref{fig:MMOPhase}). From that point, as time goes by, the value of the slow variable starts evolving according to the {slow} dynamics, along the arrows plotted on the slow manifold. Assume for instance that it converges {to} the branch of the slow manifold {corresponding to} $x<a$. Then $y$ slowly increases and the variable $x$ drifts accordingly along the slow manifold, and will necessarily reach the {subcritical} Hopf bifurcation point where the branch of the stable manifold loses stability. At this point, the fast system cannot stay in the vicinity of this branch of the stable manifold and eventually jumps towards the upper branch of the slow manifold. Because of the delays, the fast variable makes an overshoot, and subsequently displays fast damped oscillations while converging towards the critical manifold (negative delayed feedback). {On this branch $x>a$, hence} the variable $y$ {slowly decreases, crosses} the Hopf bifurcation on the upper branch and jumps towards the lower branch of fixed points. This dynamics creates a periodic solution showing small oscillations interspersed with large amplitude relaxation oscillations, the Mixed-Mode Oscillations shown in Figure~\ref{fig:AllBehaviors}. In the phase portraits of the fast system pictures, the system slowly switches back and forth between the situation described in Figure~\ref{fig:Phases}(G) and the symmetric situation. Note that for $a\neq 0$, the oscillation is asymmetric: for $a>0$ for instance, the variation of $y$ is much faster on the lower branch than on the upper branch. Therefore, on the upper branch, the trajectory gets closer of the slow manifold than on the lower branch, which is highly visible for larger values of $\eps$ (left panel of Fig~\ref{fig:MMOPhase} for $\eps=0.05$) compared to cases where $\eps$ is smaller (right panel, $\eps=0.01$). Note that in the latter case, we also see a delayed Hopf bifurcation phenomenon: the trajectories closely follow the unstable branch of the slow manifold. In the former case, an additional phenomenon may be noted: as the trajectories lose stability on the upper branch, the system shows small oscillations, signature of the emergence of complex eigenvalues along the critical manifold and small oscillations are visible. 

\begin{rem}\label{rem-MMOs}
The MMOs described in this section are different than canard mediated MMOs
arising near folded singularities \cite{brons-krupa-wechselberger:06,desroches:12}, for which transitions between regions
of different numbers of small oscillations occur by means of a passage through a canard solution (similar to canard explosion).
MMOs of this type require the presence of two slow variables and a fold singularity. Interestingly, canard transitions 
seem to also play a role in the BT mediated MMOs discussed in this paper. 
Typical trajectories pointing towards the presence of this phenomenon can be computed in the delayed FhN system. Such a trajectory
is depicted in Figure~\ref{fig:AllBehaviors}-\emph{(MMOs)} (left side). 
\end{rem}

\subsection{Bursting and Fast Oscillations}\label{sec:periodicAveraging}\label{sec:Bursting}
As $\tau$ is increased, the bifurcation diagram of Figure~\ref{fig:FastBifs} indicates that stable periodic orbits arise. These will be associated to faster oscillations of the full system, either in the form of a fast oscillation or a burst. We focus here on the case where the fast system shows two folds of limit cycles connecting the branches of stable and unstable periodic orbits. This situation corresponds to the case where $\tau$ is large enough so that the homoclinic bifurcations have disappeared (see Figure~\ref{fig:FastBifs}). This is the case for instance for $J=2$ and $\tau=1$, as we depicted in the bifurcation diagram of Figure~\ref{fig:BurstExplained} computed with the Matlab package DDE Biftool~\cite{DDEBif1,DDEBif2}. In {this} diagram, {shown in in Figure~\ref{fig:BurstExplained}(b)} we display the minimum and maximum amplitude of the periodic orbits of the system (red), while in Figure~\ref{fig:BurstExplained}(a) we replaced the amplitude of the cycle by the average value of the fast variable over a period of the cycle (green line). This quantity is relevant in order to understand the behavior of slow variable when $\eps>0$ small. 

\begin{figure}[!h]
	\centering
		\subfigure[Abstract view]{\includegraphics[width=.3\textwidth]{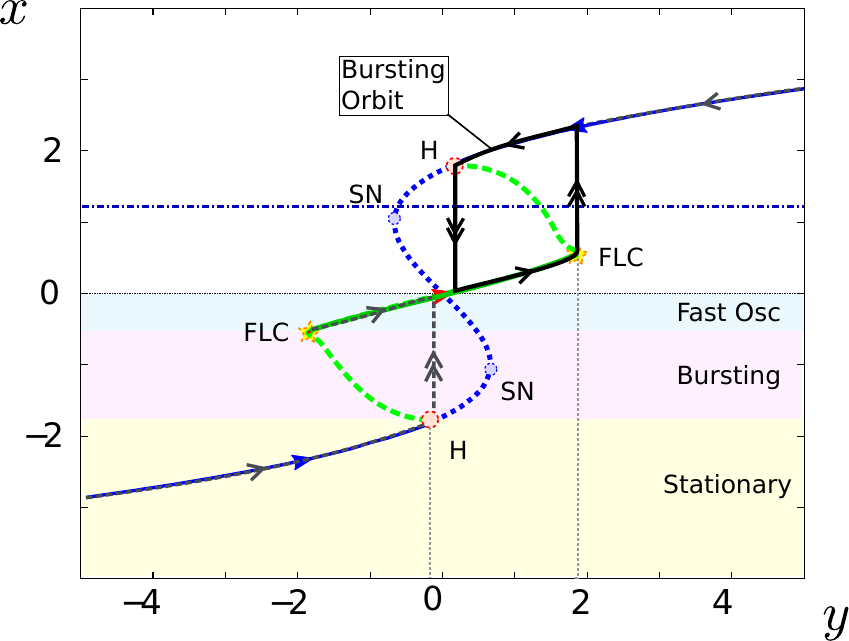}}\qquad
		\subfigure[Actual trajectory, $\varepsilon=0.1$, $a=1.01$]{\includegraphics[width=.28\textwidth]{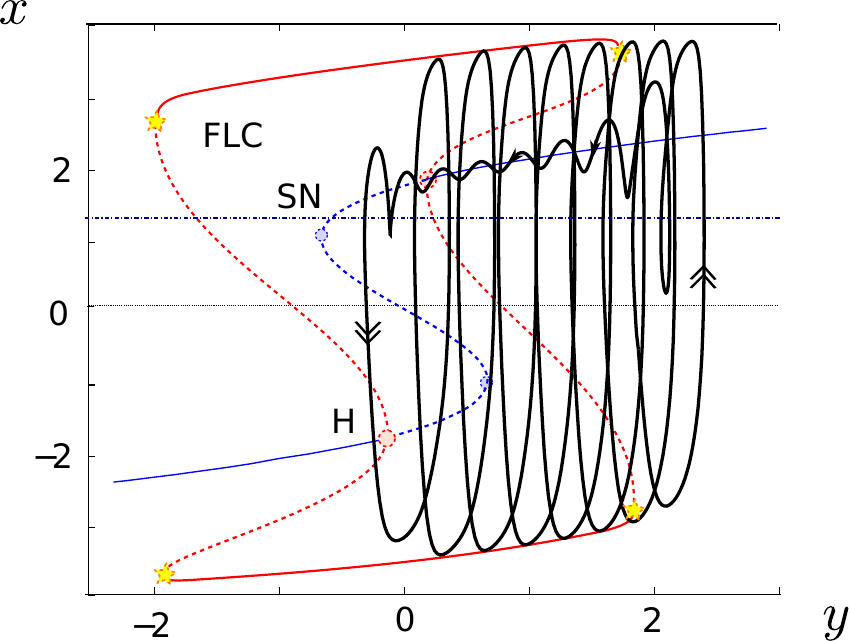}}
		\subfigure[Actual trajectory, $\varepsilon=0.1$, $a=0$]{\includegraphics[width=.43\textwidth]{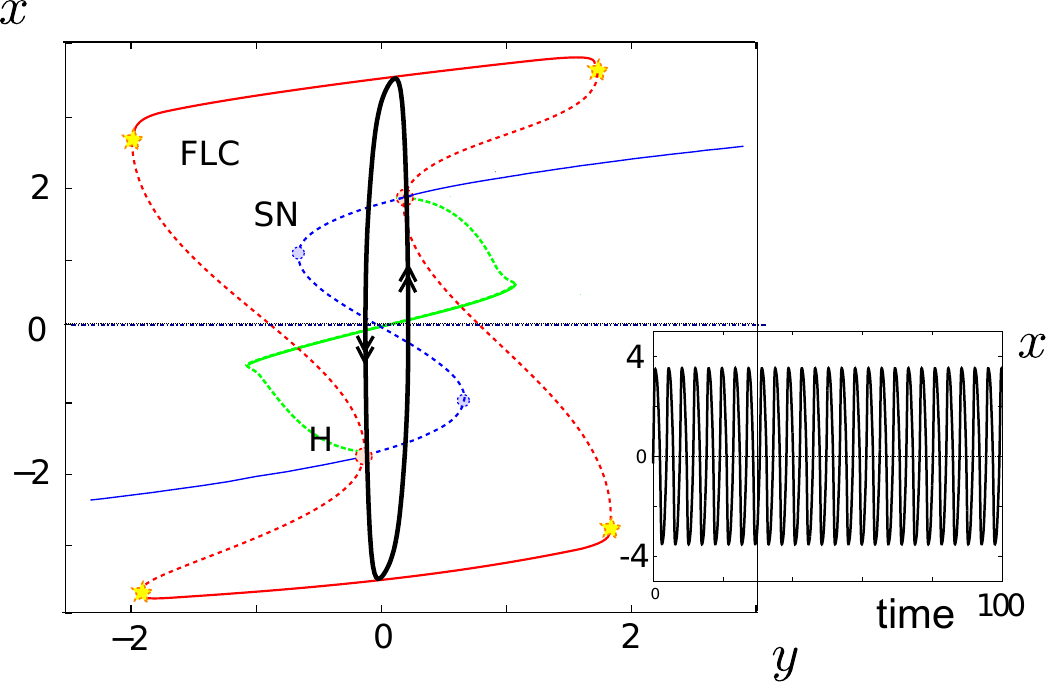}}
		\subfigure[Actual trajectory, $\varepsilon=0.1$, $a=0.5218$]{\includegraphics[width=.43\textwidth]{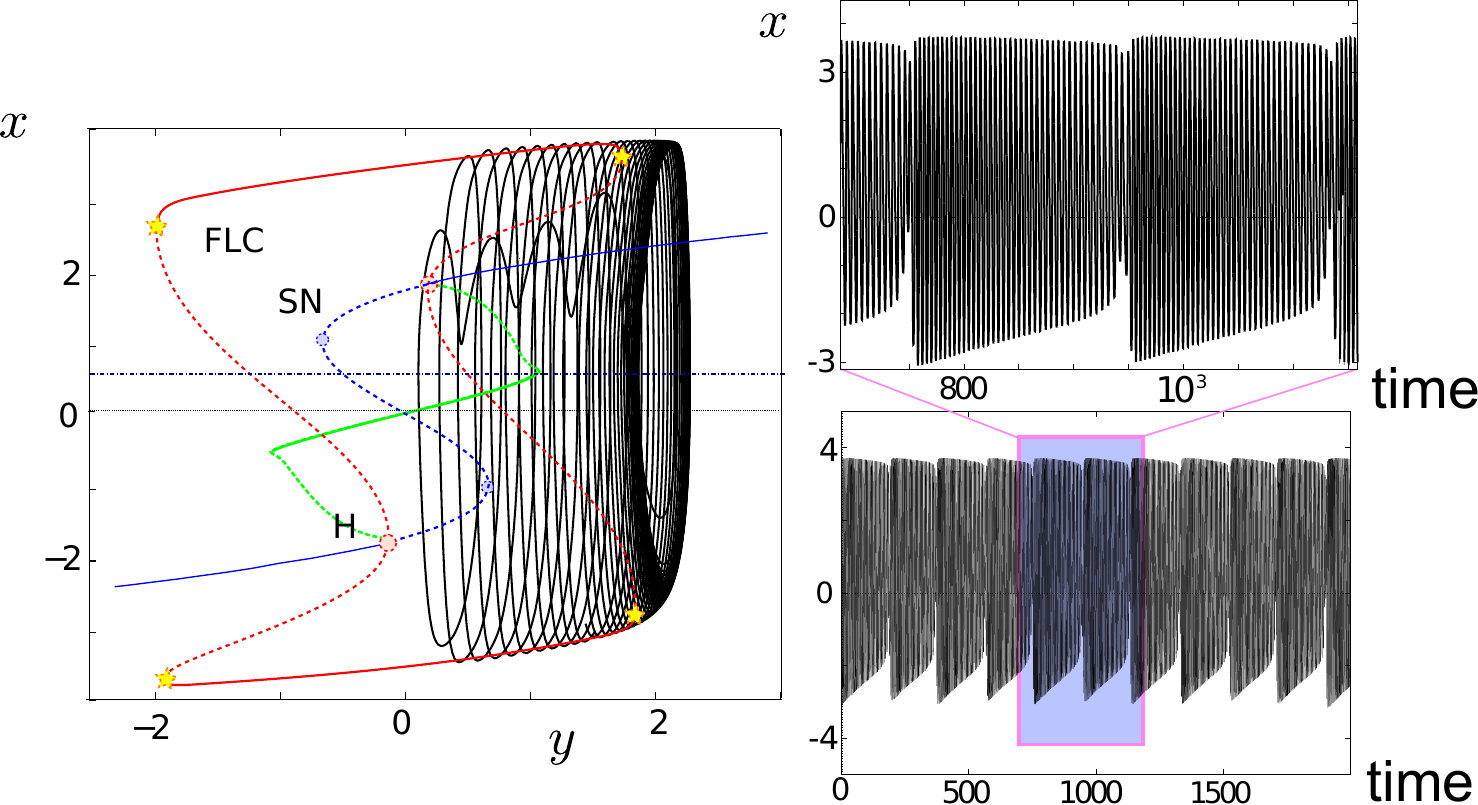}}
	\caption{FhN system, $\tau=1$. (a):Bifurcation diagram of the fast system as a function of $y$. Fixed points are in blue, average amplitude of cycles in green (plain line: stable, dashed: unstable). Regions of fast oscillations, bursting and stationary solutions are depicted for $a<0$ (to be completed by symmetry in the region $a>0$). An example of bursting cycle for $a=1.1$ in the singular limit is shown: the figure indicates the slow dynamics along these manifolds, the bursting orbit (black thick line) which is globally attractive (three possible transient in dashed gray). (b): an actual bursting orbit in the non-zero $\varepsilon$ case. For legibility, we chose $\varepsilon$ relatively large $\varepsilon=0.1$, and this explains why the system slightly departs from bursting cycle depicted in subfigure. (a). Smaller $\varepsilon$ more closely match the bursting cycle, but the number of oscillations in the burst dramatically increase (as $1/\varepsilon$) affecting legibility. (c) a fast oscillation case: $a=0$. }
	\label{fig:BurstExplained}
\end{figure}

Indeed, let us denote {by} $x_w(t)$ the fast system dynamics for a fixed value of the slow variable $y=w$ (which is a parameter of the fast system), and let us virtually uncouple the slow and fast equation.  Integrating the slow dynamics for $x(t)=x_w(t)$ yields:
\[y(t)=y(0)+\varepsilon\,t\,(a-\frac 1 t\int_0^t x_w(s)\,ds)\]
which in the slow timescale $\theta=\varepsilon t$, reads:
\begin{equation}\label{eq:slowdynamics}
	y(\theta)=y(0)+\theta \Big(a-\frac \varepsilon \theta \int_0^{\frac \theta \varepsilon } x_w(s)\,ds\Big)
\end{equation}
We have shown that $x_w(t)$ is one of two types: either a fixed point or a periodic orbit. In the limit $\eps\to 0$, the slow dynamics is therefore written, using equation~\eqref{eq:slowdynamics}, as an implicit system depending on the temporal average of the fast variable 
\[m(w,\tau)=\lim_{t\to\infty} \frac 1 t \int_{0}^t x_{w}(s)\,ds.\] 
In the case of a stationary behavior of the fast subsystem, this quantity is precisely equal to the value of the fixed point, and in the case of a periodic activity of period $T$, this quantity is equal to the average of the cycle $\frac 1 T \int_0^T x_{w}(s)\,ds$. Therefore, in the slow timescale, introducing the delay has the effect of modifying the slow manifold by adding to the stationary solution manifold a branch related to stable periodic orbits of the fast system. 

{From this point of view}, the slow manifold can be seen as the union of the critical manifold and the average value of $x$ {along} the cycles. It is depicted in Figure~\ref{fig:BurstExplained}. It present folds that are associated to the saddle-node bifurcations and to the fold of limit cycles of the fast system (see section~\ref{sec:fastHopf}). Along the branch of fixed points, we also find changes of stability due to the presence of Hopf bifurcations. 

On that manifold, the slow variable $y(\theta)$ evolves depending on the relative position of the manifold compared to the parameter $a$: $y$ is increasing when $m(y(\theta),\tau)>a$ and decreasing otherwise. When the slow dynamics reaches one of these extremal {values} of the slow attractive manifold, a fast switch will occur, depicted in this figure by {the} black vertical line with double arrows.

This analysis allows characterizing the behavior of the system as a function of $a$, and reveals new behaviors that were not observed in Figure~\ref{fig:AllBehaviors}. If the value of $a$ intersects a stable branch of the limit cycles manifold (light blue region of Figure~\ref{fig:BurstExplained}), the value of $y$ will stabilize at $a$ and the system will display fast oscillations (see~\ref{fig:BurstExplained}(c)). For larger $a$ (in absolute value), within the yellow region, the line $x=a$ intersects the critical manifold at a stable fixed point of the fast system, and the full system will stabilize at a fixed point. In contrast, in an intermediate region of values of $a$ (pink region), the line $x=a$ neither intersects the stable branch of average value of limit cycle nor a stable branch of the critical manifold. In that case, as plotted schematically in Figure~\ref{fig:BurstExplained}(a), the system shows a relaxation-like cycle including a branch of the fast oscillations manifold: the system switches periodically between the periodic orbits manifold the fast system and the branch of fixed point of the slow manifold, thus producing bursts:
\begin{itemize}
	\item when the fast system stabilizes on the branch of stable limit cycles, the system presents fast oscillations in the fast variable, and the slow variable slowly increases (with a speed {bounded below})
	\item  this persists until the slow variable $y$ reaches the value associated to the fold of limit cycles of the fast subsystem. Subsequently, the system jumps on {to} the slow manifold associated to stable fixed point of the fast dynamics
	\item On that manifold, the slow variable $y$ decreases with a lowerbounded speed, and therefore will reach the Hopf bifurcation, subsequently leading the trajectory to leave the stable manifold and jump onto the manifold of stable oscillations. 
\end{itemize}
This scenario hence occurs periodically in time, defining a periodic orbit composed of a phase of fast oscillations followed by a silent period, in other words a burst. A trajectory of the dynamical system superimposed to the slow manifold is plotted in Figure Figure~\ref{fig:BurstExplained}(b). In the phase portraits of the fast system pictures, the system slowly switches back and forth between the situation described in Figure~\ref{fig:AppendixPhase}(a) to that of Figure~\ref{fig:Phases}(E). 
Eventually, we note that for values of $a$ close to the transition from fast spiking to bursting, complex oscillatory patterns made of fast oscillations with modulated amplitude are found (see~\ref{fig:BurstExplained}(d)) evocative of torus canard phenomena~\cite{burke2012showcase}.

\subsection{Chaotic transition}\label{sec:ChaosTransition}
We now investigate the origin of the chaotic orbits displayed in Figure~\ref{fig:AllBehaviors}\emph{(Chaos)}. These solutions, arising for a range of values of the delay between those related to MMOs and those related to the presence of stable bursting cycles, display irregular alternations of MMO-like or bursting orbits, together with mixed trajectories. {More precisely}, the chaotic orbits observed (Figure~~\ref{fig:AllBehaviors}\emph{(Chaos)}) show the presence of relaxation cycles of relatively regular period, but during the increase {of the} phase of the slow variable, the fast variable shows chaotic alternations between at least 3 behaviors: (i) damped oscillations convergence towards the stable manifold (MMO-like trajectory with no fast oscillation), (ii) large periodic oscillations (bursting-like trajectory) or (iii) a mixture between the two types of orbits.

In order to understand the origin of this chaotic switching, one needs to understand the fate of the trajectories leaving the upper branch of the stable manifold (in the case $a>0$ that we have been considering thus far). {Such trajectories} follow the {slow} manifold {up to} the vicinity of the Hopf bifurcation point where the critical manifold loses stability. Trajectories either switch to the other branch of fixed points (MMO case) or to the periodic orbit (busting case). This choice depends on the possible {fast} trajectories emerging from the neighborhood of the fixed point as it loses stability. In the MMO case described in section~\ref{sec:MMOs}, there is no periodic orbit at all and the system goes towards the stable fixed point, and in the bursting case described in section~\ref{sec:Bursting}, in a wide neighborhood of the Hopf bifurcation, the only stable orbit is the fast periodic orbit. The chaotic phase occurs when both attractors are stable in the vicinity of the Hopf bifurcation, and moreover when the separation between the two {behaviors} is very sensitive. This is the case when the unstable orbit disappeared but not the large orbit, which is precisely what we observe in Figure~\ref{fig:PhaseSpaceChaos}. In that diagram we show the phase plane representation of the fast system (in the coordinates $(x(t),x(t-\tau))$) for the value of $\tau$ corresponding to the chaotic region in Figure~\ref{fig:AllBehaviors}\emph{(Chaos)}. We observe indeed that in {this} case, the system can either switch towards the stable fixed point or to the stable periodic orbit: one branch the unstable manifold of the saddle fixed point converges on one side to the fixed point and the other branch to the periodic orbit, while both branches of the stable manifold wind around the unstable fixed point. The system is therefore extremely sensitive {to} the precise location of the trajectory as it leaves the neighborhood of the now unstable fixed point: in the two-dimensional extended phase space metaphor of the system, the two branches of stable manifold of the saddle do separate those trajectories converging towards the fixed point and those converging towards the periodic orbit. The attraction bassins of the fixed point and cycles are interwoven in a very intricate fashion\footnote{In the vicinity of the fixed point after the Hopf bifurcation, it is clear that the expansion rate in the delayed system becomes very large because of the separation of trajectories between those converging to fixed points and to the periodic orbit}. We interpret this mechanism as the origin of the emergence of the irregular alternation of MMO-like and burst-like trajectories in this parameter regime.
\begin{figure}
	\centering
		\subfigure[Phase Portrait]{\includegraphics[width=.35\textwidth]{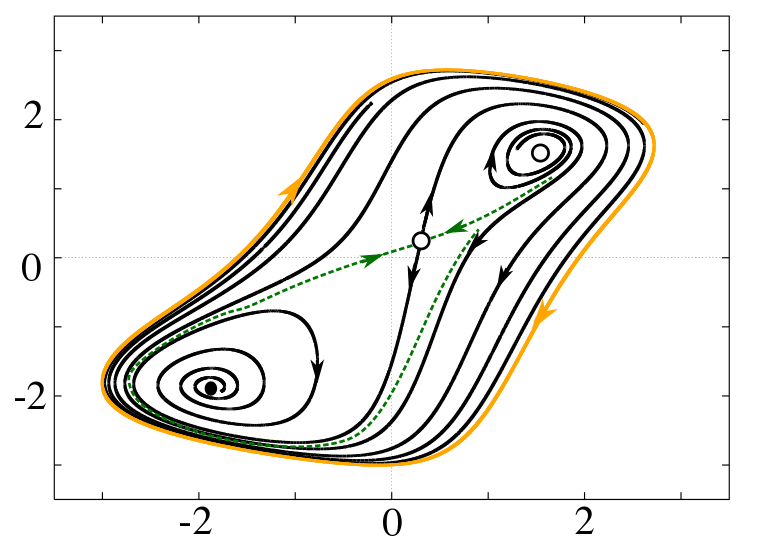}\label{fig:PhaseSpaceChaos}}\qquad 
		\subfigure[Fast system bifurcations]{\includegraphics[width=.32\textwidth]{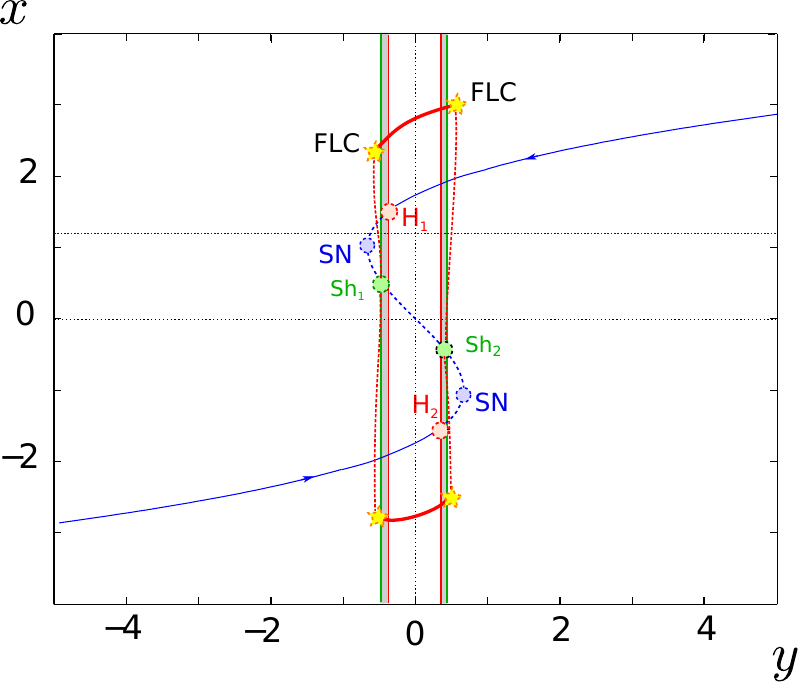}}\\
		\subfigure[MMO cycle]{\includegraphics[width=.32\textwidth]{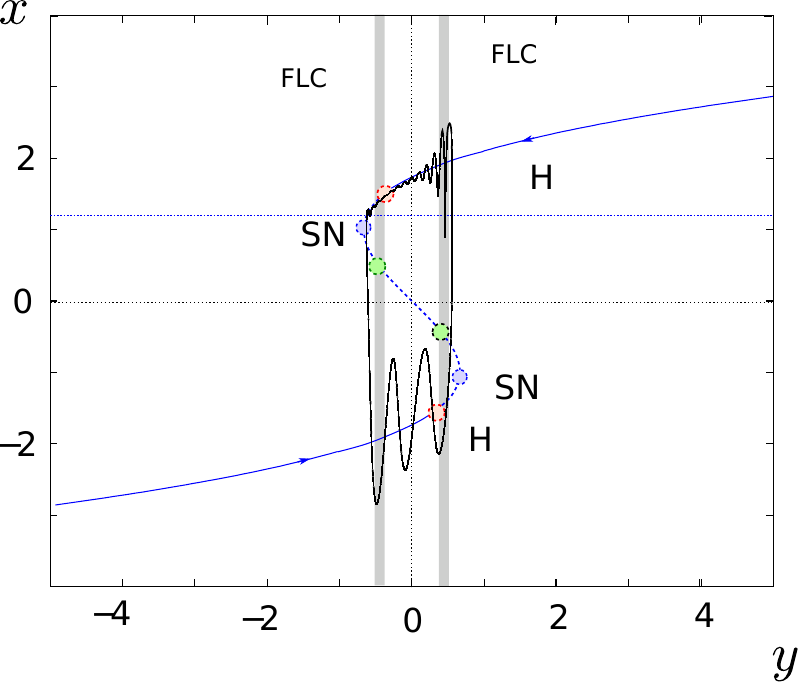}}
		\subfigure[Bursting cycle]{\includegraphics[width=.32\textwidth]{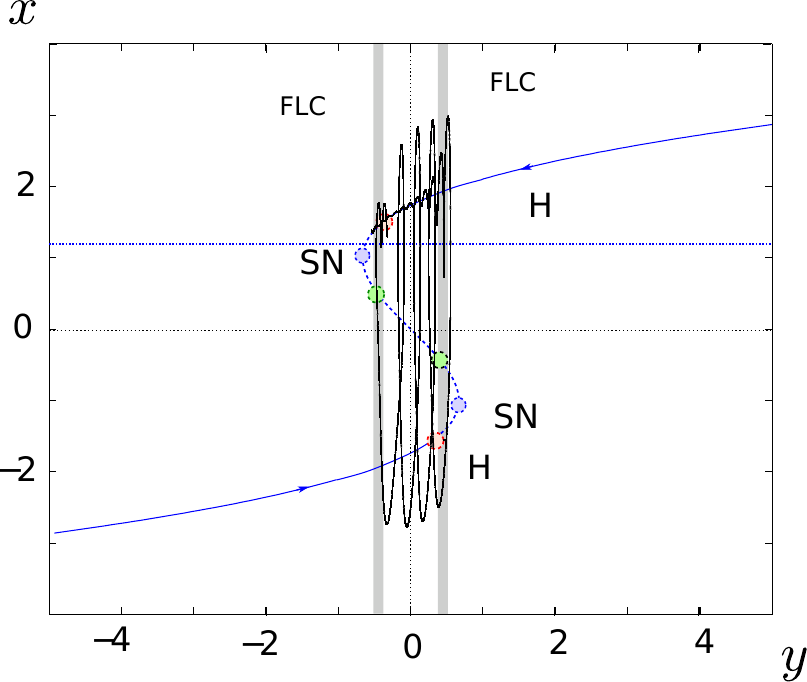}}
		\subfigure[Mixed cycle]{\includegraphics[width=.32\textwidth]{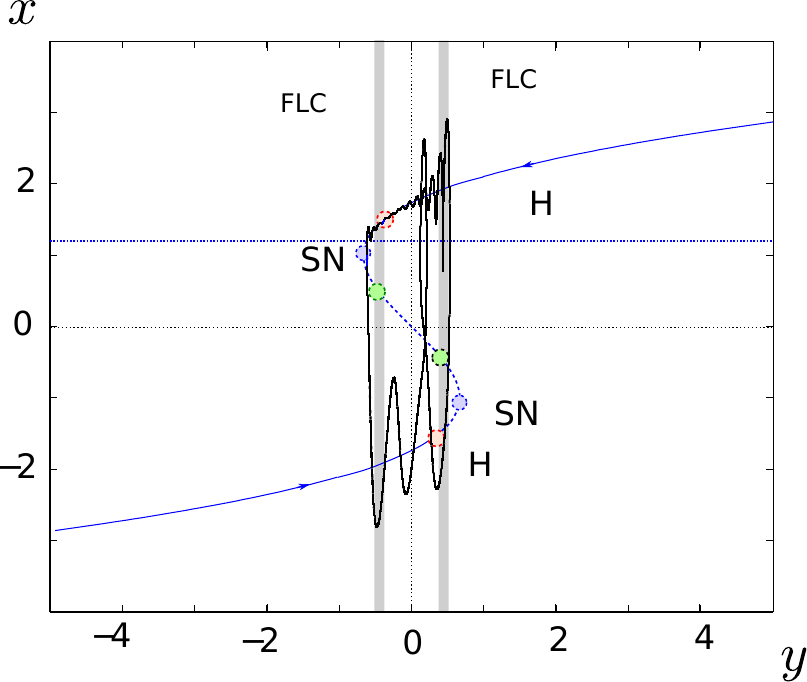}}
	\caption{Chaotic orbits. $\tau=0.7$, $\eps=0.05$. (a) Phase plane in a chaotic situation ($y=-0.4$) the stable manifold of the saddle fixed point winds around the unstable orbit. (b) Bifurcation diagram of the system as a function of $y$. The saddle homoclinic bifurcations (Sh2) and the Hopf bifurcations (H) are such that there exists a range of values of $y$ (gray region) in which the phase portrait is of type (a): no unstable cycle acts as a separatrix between the fast periodic orbit and the fixed point. (c,d,e) represent three trajectories for $\eps=0.05$ (large for legibility). (c) MMO-like trajectory (delayed switch favors such trajectories), (d) bursting-like trajectory (favored by early switches), (e) is a mixed trajectory: switch from the critical manifold at an intermediate location, yielding loose convergence towards the lower branch of the critical manifold before switching to a periodic orbit. }
	\label{fig:ChaosDifferentTrajectories}
\end{figure}

In Figure~\ref{fig:ChaosDifferentTrajectories}, we represent different branches of the trajectories on the phase plane $(x,y)$, together with the bifurcation diagram of the fast system. In this diagram, we represented the regions of $y$ for which the fast system has a topology similar to that represented in Figure~\ref{fig:PhaseSpaceChaos}, i.e. the presence of a stable periodic orbit and a stable fixed point, two unstable fixed point and no unstable periodic orbit. This region overlaps with the Hopf bifurcation point, and makes the system very sensitive to the specific shape of the trajectory. We represented on this diagram an actual trajectory of the system during one period of the relaxation oscillation, in all three cases (MMO, burst and mixed trajectory).

\section{Discussion}
We have considered here a simple model of delay differential equation based on the FitzHugh-Nagumo neuron model, that provides a  canonical example of canard explosion. This system shows a surprisingly rich phenomenology. For small delays, a classical canard explosion occurs similarly to what happens in the non-delayed FitzHugh-Nagumo equation, which is the topic of~\cite{krupa-touboul:14a}. However, as delays are increased, they destabilize branches of the critical manifold that are necessary to support canard cycles, and complex dynamics occur. As delays are increased, the first phenomenon observed is the emergence of instabilities of the small cycles. As shown in~\cite{krupa-touboul:14a}, these are related to an increase of the expansion rate along the critical manifold. Indeed, although the stability of the branches of the critical manifold remains unchanged, increasing the delay induces a decrease of the contraction rate of the trajectories along the stable branch of the critical manifold passing below the increasing expansion rate along the unstable manifold. This induces a sequence of bifurcations of canards cycles, and numerical evidence {shows} the emergence of period doubling bifurcations and chaotic small oscillations. As delays are further increased, stable branches of the critical manifold may lose stability. In that case, there is no more room for canard cycles. In the case of the FhN system, the branches of the critical manifold lose stability through a subcritical Hopf bifurcation emerging from one of the folds through a Bogdanov-Takens bifurcations. This results, {in particular,  in the emergence of a new type of MMOs}. For even larger delays, the fast system shows the presence of a stable limit cycle, which yields fast oscillations and bursting in the full system. Chaotic trajectories showing irregular alternations of MMO- and burst-like trajectories were also found within a specific parameter region, in which stable manifolds in the extended phase plane $(x_{t-\tau},x_{t})$ show a very specific shape. 

Delay differential equations are infinite dimensional dynamical systems, and as such may have very complex dynamics. However, we conjecture that the phenomena observed in this system may be observed in a three dimensional system with two fast and one slow variables. Such a toy model would allow going deeper into a number of phenomena that we identified in our infinite-dimensional setting and that are still not completely understood from a theoretical viewpoint. These include accounting for chaotic small oscillations when expansion overcomes contraction, the development of a single Hopf bifurcation curve with a Bautin point out of the merging of singular Hopf bifurcations and Bogdanov-Takens bifurcation in the fast system. Eventually, such a finite-dimensional version of the system would allow understanding more precisely the nature of the MMOs emerging here in the absence of two slow variables, as well as the chaotic orbits mixing MMOs- and burst-like trajectories. 

From the application viewpoint, the model analyzed is a simple representation of the activity of neuronal networks. This relative simplicity allowed us to uncover a number of interesting phenomena that are indeed observed in collective firings of cells, such as transitions from subthreshold oscillations to MMOs and bursting, that are for instance observed in the inferior olive~\cite{bernardo-foster:86}. We expect that the phenomena observed here persist in more complex systems and with different kinds of synaptic interactions, as long as these have an effective negative delayed feedback and the same kind of local bifurcation as the delayed FhN system. Our study was based on assuming in particular $\gamma=0$ in equation~\eqref{eq:Delayed}. Simulations of the delayed FhN model with $\gamma>0$ are proposed in appendix~\ref{append:FhN} and indeed show that one recovers the same phenomenology. Such phenomena were never attributed to the presence of delays. The observations made in the present manuscript now indicate that bursting, spiking and subthreshold oscillations may arise from the same model but with different effective delays in the communications between cells, and could account for the different spike patterns observed in different cortical areas.

\appendix

\section{Normal form reduction at Hopf bifurcation}\label{append:NormalForms}
In sections~\ref{sec-Hopfexist} and~\ref{sec:fastHopf}, we have identified parameters for which the linearized equation at a fixed point, respectively of the full system~\eqref{eq:FhNdelay} or of its fast subsystem, have a pair of purely imaginary eigenvalues. This provides us with the locus of possible Hopf bifurcations, and we show here aided by numerical evaluations that the system undergoes generic Hopf bifurcations. To this end one needs to reduce the system to normal form on the center manifold. This appendix is devoted to providing the outline of the method, in the context of the full system (slightly more complex than the fast one), and along the branch of Hopf bifurcations given by the first branch $\tau=\tau_1^0(J,\eps,a)$ for $a\in(1,\sqrt{1+2J})$, defined in equation~\eqref{eq:taustar}. This curve correspond to a change of stability of the fixed point $(a,a-a^3/3)$. We showed that as $\eps\to 0$, the curve collapses to a non-smooth curve made of a segment $\{J\tau <1, a=1\}$ and the curve of Hopf bifurcations of the fast system $\tau(a)$ given by equation~\eqref{eq:taufast}. We further showed that for when $\eps\to 0$, the Hopf bifurcation corresponding to $J\tau<1$ is supercritical, and otherwise subcritical.

We confirm this change of criticality of the Hopf bifurcation for $\eps>0$ by reducing the system to normal form reduction along the Hopf bifurcation manifold. We follow a relatively standard tradition exposed for instance for delayed differential equations in~\cite{hale-lunel:93}, and reviewed in~\cite{campbell2009calculating}. We outline the method and calculations performed aided by the formal calculations software Maple\textregistered, before presenting the obtained coefficients and their shape as a function of the parameters.

This method is based on considering the delay differential equation~\eqref{eq:FhNdelay} as a dynamical system in the Banach space $X$ of continuous functions from $[-\tau,0]$ to $\R^2$ endowed with the uniform norm. 
\[\Vert z\Vert = \sup_{\theta\in [-\tau,0]} \vert z(t)\vert\]
where $\vert z(t)\vert$ is the Euclidian norm on $\R^2$ of the value of the function $z\in X$ at time $t\in [-\tau,0]$. The delayed differential equation is expressed as a functional differential equation on this space. Indeed, defining now $z_t \in X$ as the portion of solution $(x(t),y(t),t\in [t-\tau,t])$, with the definition
\[z_t(\theta)=z(t+\theta), \quad -\tau\leq \theta \leq 0,\]
we can rewrite equation~\eqref{eq:FhNdelay} as:
\[\frac{d}{dt} x_t(\theta)=\begin{cases}
	\frac{d}{d\theta} x_t(\theta) & -\tau \leq \theta <0\\
	\Big[y_t(0)+ x_t(0)+ J(x_t(0)-x_t(-\tau))\Big] -\frac{x_t(0)^3}{3} & \theta=0
\end{cases}\]
and similarly, 
\[\frac{d}{dt} y_t(\theta)=\begin{cases}
	\frac{d}{d\theta} y_t(\theta) & -\tau \leq \theta <0\\
	\Big[\eps (a-x_t(0))\Big] & \theta=0
\end{cases}\]
The terms within the brackets is the affine part of the flow, separated from the nonlinear cubic term in the equation on the first variable, which does not involve delays. Calculations are much simplified when changing variables so that the equilibrium is at the origin. This change of variable simply amounts changing the origin, i.e. consider the equation in terms of $\tilde{x}=x-x^*$. The equations now read:
\begin{align*}
	\frac{d}{dt} \tilde{x}_t(\theta)=&\begin{cases}
		\frac{d}{d\theta} \tilde{x}_t(\theta) & -\tau \leq \theta <0\\
		\Big[(1-a^2)\tilde{x}_t(0)+ J(\tilde{x}_t(0)-\tilde{x}_t(-\tau))\Big] - a\tilde{x}_t(0)^2 - \frac{x_t(0)^3}{3} & \theta=0
	\end{cases},\\
	\frac{d}{dt} \tilde{y}_t(\theta)=&\begin{cases}
		\frac{d}{d\theta} \tilde{y}_t(\theta) & -\tau \leq \theta <0\\
		-\eps \tilde{x}_t(0) & \theta=0
	\end{cases}.
\end{align*}
The linear operator decomposes into a part only depending on $\tilde{z}_t(0)$:
\[A_0=\Matrix{1-a^2+J}{1}{-\eps}{0}, \]
and an operator depending on $\tilde{x}_t(-\tau)$:
\[A_1=\Matrix{-J}{0}{0}{0}.\] 
At Hopf bifurcation points, i.e. when the characteristic equation
\[det(\lambda I_2 -A_0 - e^{-\lambda \tau}A_1)=0\]
has a complex solution $\lambda=\pm \mathbf{i}\omega$, a complex right eigenvector is:
\[v=\Vector{1}{\frac{\mathbf{i}\eps}{\omega}}, \]
which corresponds to a two-dimensional center eigenspace $N$:
\[\Matrix{\cos(\omega\theta)}{\sin(\omega\theta)}{\displaystyle{-\frac{\sin(\omega\theta)\eps}{\omega}}}{\displaystyle{\frac{\cos(\omega\theta)\eps}{\omega}}}\]
and an infinite-dimensional stable eigenspace $S$. The corresponding center manifold is given by:
\[\mathcal{M} = \{\phi \in X, \; \phi = \Phi u + h(u)\}\]
where $u=(u_1,u_2)^t$ are the coordinates on the nullspace $N$ and $h(u)\in S$. Classically, we project the solutions to the delay differential equation on $\mathcal{M}$ and obtain:
\begin{multline*}
	z_t(\sigma)=\Vector{\cos(\omega\sigma)u_1(t) + \sin(\omega\sigma)u_2(t) }{\displaystyle{-\frac{\sin(\omega\theta)\eps}{\omega}u_1(t)+\frac{\cos(\omega\theta)\eps}{\omega}u_2(t)}} \\
	+ \Vector{\displaystyle{h_{11}^1 (\sigma) u_1(t)^2+h_{12}^1 (\sigma) u_1(t)u_2(t)+h_{22}^1 (\sigma) u_2(t)^2}}{\displaystyle{h_{11}^2 (\sigma) u_1(t)^2+h_{12}^2 (\sigma) u_1(t)u_2(t)+h_{22}^2 (\sigma) u_2(t)^2}} + \mathcal O (\Vert x\Vert^3)
\end{multline*}
where the $h_{jk}^i$ characterize the Taylor expansion of the solution on the center manifold. These coefficients satisfy a system of affine ODEs, in which the affine term depends on the orthogonal basis of the nullspace, denoted $\psi(\theta)$ (which is straightforward to calculate) arising from the projection of the original equation on $N$. These are now classical methods, introduced and used in~\cite{campbell2009calculating,stone2004stability,faria1995normal}. In our case, the solution to the linear ordinary differential equation of $h_{jk}^i$ yield relatively complex expressions, in terms of six constants $C_1\cdots C_6$, that are then solved in order to match boundary values (and solve the original DDE on the center manifold). One finds:
\[h_{11}^1 = \frac a {3\omega} \big(\cos(\omega\theta)\Psi_{21}-\sin(\omega\theta)\Psi_{11}(0)\big) + C_2 - C_5 \sin(\omega\theta)\cos(\omega\theta) + C_6  (\cos(\omega\theta)^2-\frac 1 2)\]
and similar expressions for the other terms, and it is not hard to determine the constants. From these expressions, we obtain a system of ODEs describing the evolution in time of $u$:
\[\begin{cases}
	\dot{u_1}=\omega u_2 + \Psi_{12}(0) (G_1(u_1,u_2) ) + \mathcal O (\Vert u\Vert^4)\\
	\dot{u_2}=\omega u_1 + \Psi_{22}(0) (G_1(u_1,u_2) ) + \mathcal O (\Vert u\Vert^4)
\end{cases}\]
with $G$ a cubic polynomial, whose coefficients can be deduced from the evaluation of $h_{jk}^i$ and $C_i$, and the first Lyapunov coefficient, characterizing the type of the Hopf bifurcation, is a simple function of these coefficients. With the help of the formal calculations software Maple\textregistered, we compute all these coefficients analytically. It happens that, as of quadratic terms, only the coefficients of $u_1^2$ in $G_1$ and $G_2$ are non zero, and cubic coefficients involved in the computation of the Lyapunov coefficients enjoy a relatively simple form. These coefficients are given by:
\[\begin{cases}
	G_1(u_1,u_2)=-a \Psi_{11}(0) u_1^2 + u_1^3 \Psi_{11}(0)\left(-\frac 1 3 -2 h^1_{11}(0)a\right)- 2\Psi_{11}(0) a h_{22}^1(0) + \cdots\\
	G_1(u_1,u_2)=-a \Psi_{21}(0) u_1^2 -2 a h_{12}^1(0) u_1^2 u_2 + \cdots\\
\end{cases}\]
where the dots correspond to terms that are not necessary to compute the first Lypaunov coefficient. From this expression, classical formulae (see e.g.~\cite{kuznetsov}) yield a very complex expression for the first Lypaunov coefficient as a function of the parameters. This expression is however exact, and allows direct numerical computation of the Lyapunov coefficient, that we presented in section~\ref{sec:Lyapu}.

\section{The FitzHugh-Nagumo system with $\gamma\neq 0$}\label{append:FhN}
In this appendix, we provide one numerical example of delayed self-coupled FitzHugh Nagumo system~\eqref{eq:Delayed} with $\gamma\neq 0$ that has similar dynamics as the ones studied in the main text, i.e. for $\gamma$ small enough. 
\begin{equation}\label{eq:DelayedFhN}
	\begin{cases}
		\dot{x}=x-\frac{x^3}{3}+y + J(x(t)-x(t-\tau))\\
		\dot{y}=\eps(a-x+\gamma y)
	\end{cases}
\end{equation}
The fixed points of the system are given by $y=(x-a)/\gamma$, where $x$ is the solution of the cubic polynomial:
\[\frac{\gamma+1}{\gamma} x -\frac{x^3}{3} -\frac a \gamma = 0.\]
This equation can be solved using Cardano method, exactly as we solve the cubic equation of the fast system. In detail, the Cardano discriminant of the equation writes:
\[\Delta = \frac{27}{b^2} (9 a^2 -\frac{b+1}{b}).\]
When $\Delta>0$, the system has a unique solution:
\[x_0=\left(-\frac{3a}{2b} + \sqrt{\Delta}\right)^{1/3} - \left(\frac{3a}{2b} + \sqrt{\Delta}\right)^{1/3},\]
and when $\Delta<0$, we find 3 roots:
\[x_k = 2\left(\frac{b+1}{b}\right)^{1/3} \cos\left(\frac 1 3  \arccos\left( -\frac{3a\sqrt{b}}{2 \sqrt{(b+1)^3}}\right) + \frac{2 k \pi}{3}\right).\]
The stability of the fixed points can be analyzed in a similar fashion as done before. In that case, denoting by $x_*$ one of the equilibria found, the dispersion relationship corresponds to the determinant of the matrix:
\[\xi \Matrix {1}{0}{0}{1}- \Matrix{A}{1}{-\eps}{\eps \gamma}\]
with $A=1-(x_*)^2 +J (1-e^{-\xi \tau})$, and the dispersion relationship therefore reads:
\[\xi^2- (A+\gamma \eps)\xi+\eps=0,\]
or:
\[e^{-\xi\tau}=\frac{\xi^2-\xi(1-(x_*)^2 +J+\gamma \eps)+\eps (\gamma(1-(x_*)^2 +J)+1)}{J\xi +\gamma \eps J}.\]
This appears much more complex to solve in closed form. 

The fast dynamics is identical as that of the delayed FhN system, and therefore the analysis of section~\ref{sec:fastHopf} applies. This allows to account for most of the behaviors of the FhN system. The only phenomenon requiring some more work is the presence of canard explosion and their type. Similarly to the non-delayed case, the details of the slow dynamics matter for the demonstration of the presence of canard explosions. Here, we provide numerical evidences for canard explosion, and show that similar phenomena of emergence of Mixed-Mode oscillations and bursting appear, for the same values of parameters, in the parameter regime where FhN system has a unique fixed point. Let us for instance consider the FhN model for $b=-1$ and $\gamma=-0.3$. We set $a=0.85$ (which leads the system to a similar situation as studied in the FhN system with $a=1.01$). For $\tau>1/J$, we have seen that geometric analysis of the fast system accounted for most phenomena arising in the FhN system. This property persists for the FitzHugh-Nagumo system, and we therefore observe the very same behaviors and transitions at similar delays. 

\begin{figure}[htbp]
	\centering
		\includegraphics[width=.8\textwidth]{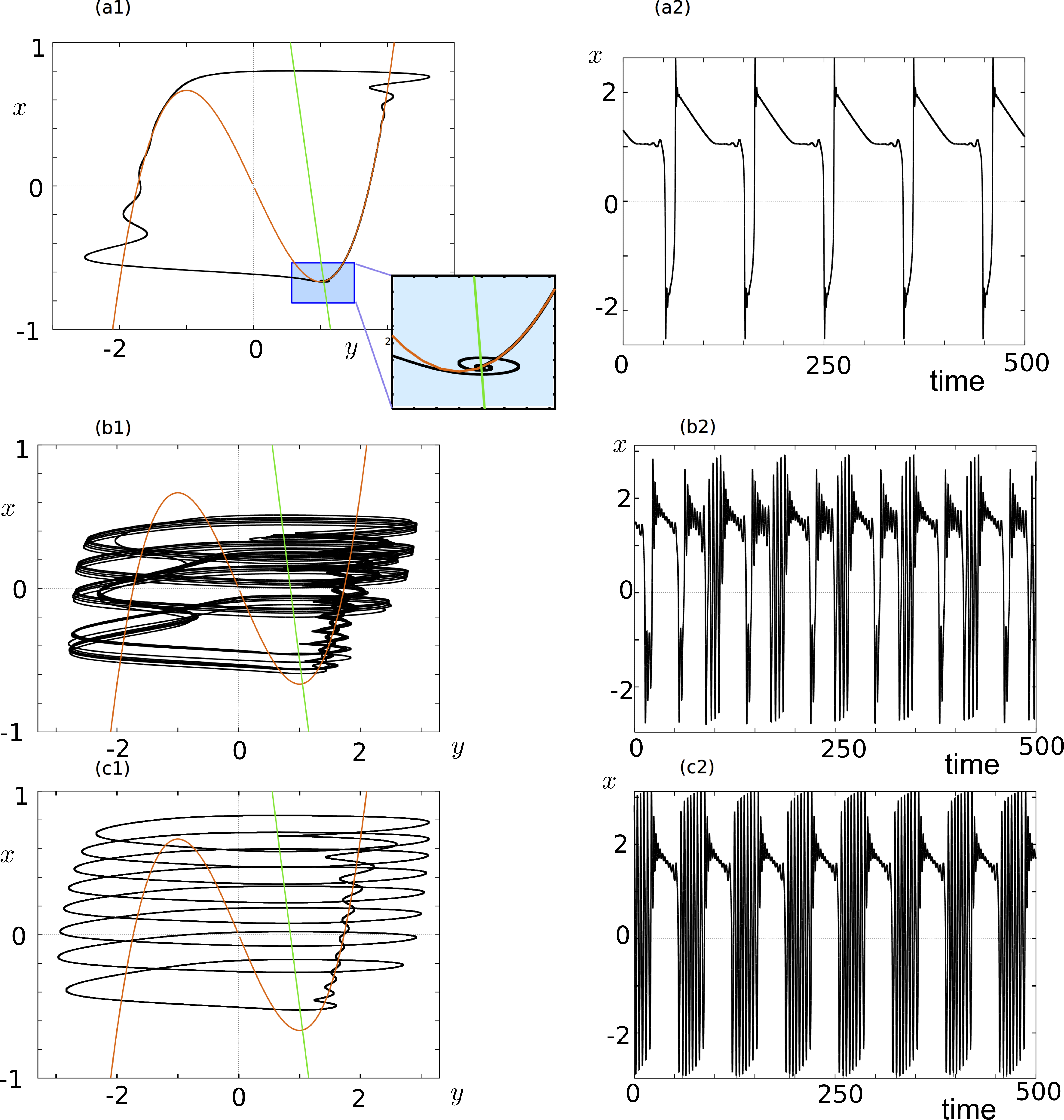}
	\caption{Self-coupled delayed FitzHugh-Nagumo system for $\tau>1/J$ and $\eps=0.05$. (a) $\tau=0.6$, (b) $\tau=0.7$, (c)$\tau=0.9$. The system transitions from MMOs to bursting through a period of chaos, arising for similar parameter values as in the delayed FhN system. }
	\label{fig:label}
\end{figure}
%

\section*{Conflict of Interest:} 
The authors declare that they have no conflict of interest and that they have complied with ethical standards.

\end{document}